\documentclass{article}
\usepackage{arxiv}
\usepackage{microtype}
\usepackage[utf8]{inputenc} 
\usepackage[T1]{fontenc}    
\usepackage{nicefrac}
\usepackage{lipsum}
\usepackage{graphicx}
\usepackage{subfigure}
\usepackage{amsfonts}
\usepackage{dirtytalk}
\usepackage{mathrsfs}
\usepackage[T1]{fontenc}
\usepackage[T3,T1]{fontenc}
\usepackage{commath}
\usepackage{amsmath}
\allowdisplaybreaks
\usepackage{amsthm}
\usepackage{mathtools}
\usepackage[shortlabels]{enumitem}

\usepackage{natbib}

\DeclareMathOperator*{\argmin}{arg\,min}

\newtheorem{theorem}{Theorem}[section]
\newtheorem{lemma}[theorem]{Lemma}
\newtheorem{proposition}[theorem]{Proposition}
\newtheorem{corollary}[theorem]{Corollary}
\newtheorem{example}[theorem]{Example}

\theoremstyle{definition}
\newtheorem{definition}[theorem]{Definition}

\usepackage{tikz-cd}
\usepackage{booktabs} 
\usepackage{hyperref}
\DeclarePairedDelimiter\ceil{\lceil}{\rceil}

\title{Towards Empirical Process Theory for Vector-Valued Functions: Metric Entropy of Smooth Function Classes}
\author{Junhyung Park\thanks{Corresponding author: \texttt{junhyung.park@tuebingen.mpg.de}}\\
	Max Planck Institute for Intelligent Systems\\
	T\"ubingen, Germany\\
	\And
	Krikamol Muandet\\
	Max Planck Institute for Intelligent Systems\\
	T\"ubingen, Germany}
\date{}
\begin{document}
\maketitle
\begin{abstract}
	This paper provides some first steps in developing empirical process theory for functions taking values in a vector space. Our main results provide bounds on the entropy of classes of smooth functions taking values in a Hilbert space, by leveraging theory from differential calculus of vector-valued functions and fractal dimension theory of metric spaces. We demonstrate how these entropy bounds can be used to show the uniform law of large numbers and asymptotic equicontinuity of the function classes, and also apply it to statistical learning theory in which the output space is a Hilbert space. We conclude with a discussion on the extension of Rademacher complexities to vector-valued function classes. 
\end{abstract}
\section{Introduction}\label{Sintro}
Empirical process theory is an important branch of probability theory that deals with the empirical measure \(P_n=\frac{1}{n}\sum^n_{i=1}\delta_{X_i}\) based on random independent and identically distributed (i.i.d.) copies \(X_1,...,X_n\) of a random variable \(X\) on a domain \(\mathcal{X}\), and stochastic processes of the form \(\{P_nf-Pf:f\in\mathcal{F}\}\), where \(\mathcal{F}\) is a class of functions \(\mathcal{X}\rightarrow\mathbb{R}\). Due to its very nature, the theory has found a wealth of applications in statistics \citep{vandervaart1996weak,vandegeer2000empirical,kosorok2008introduction,shorack2009empirical,dudley2014uniform}. In particular, it has been the major tool in analysing properties of estimators in supervised learning, both in regression and classification \citep{gyorfi2006distribution,steinwart2008support,shalev2014understanding}. 

In the traditional (and still dominant) supervised learning setting, the output space is (a subset of) \(\mathbb{R}\), but there is a rapidly growing literature in machine learning and statistics on learning vector-valued functions \citep{micchelli2005learning,alvarez2012kernels}. This occurs, for example, in multi-task or multi-output learning \citep{evgeniou2005learning,yousefi2018local,xu2019survey,reeve2020optimistic}, functional response models \citep{morris2015functional,kadri2016operator,brault2017large,saha2020learning}, kernel conditional mean embeddings \citep{grunewalder2012conditional,park2020measure} or structured prediction \citep{ciliberto2020general,laforgue2020duality}, among others.

There are valuable works analysing the properties of vector-valued regressors with specific algorithms, notably integral operator techniques in vector-valued reproducing kernel Hilbert space regression \citep{caponnetto2006optimal,kadri2016operator,singh2019kernel,park2020regularised,cabannes2021fast}, and in the form of (local) Rademacher complexities, empirical process theoretic techniques have been applied to cases where the output space is finite dimensional \citep{yousefi2018local,li2019learning,reeve2020optimistic,wu2021fine}. However, as general empirical process theory is developed, to the best of our knowledge, exclusively for classes of real-valued functions, the powerful armoury of empirical process theory has not been utilised fully to analyse vector-valued learning problems. The aim of this paper is to provide some first steps towards developing a theory of empirical processes with vector-valued functions. 

An indispensable object in empirical process theory is metric entropy of function classes, and one of the most frequently used function classes is that of smooth functions. In our main results in Section \ref{Ssmooth}, we investigate how we can bound the entropy of classes of smooth vector-valued functions. When the output space is infinite-dimensional, bounding the entropy becomes far less trivial, compared to the case of real-valued function classes. For example, seemingly benign function classes such as the classes of constant functions onto the unit ball clearly has infinite entropy with respect to any reasonable metric, since the unit ball in an infinite-dimensional Hilbert space is not totally bounded \citep[p.62, Corollary 6]{bollobas1999linear}. 

This requires us to look for other ways to restrict the functions than in the norm sense, and in the main results of this paper in Section \ref{Ssmooth}, we propose considering subsets of the output space with specific geometric features. We leverage notions from dimension theory of metric spaces \citep{heinonen2001lectures,robinson2010dimensions}, specifically the entropy-based upper box-counting and Assouad dimensions. These, along with other dimensions such as the Hausdorff or packing dimensions, are inherently fractal and thus are studied extensively in fractal geometry \citep{edgar2007measure,massopust2014fractal,fraser2020assouad}, but we do not make explicit use of their fractal nature. Rather, we investigate how restricting our function classes to subsets of the output space with these properties can help us bound their entropies. We use these entropy bounds to show uniform law of large numbers and asymptotic equicontinuity, and in Section \ref{Sstatisticallearningtheory}, we demonstrate applications in statistical learning theory, and discuss the generalisation of the popular Rademacher complexity to the vector-valued setting. 

\subsection{Mathematical Preliminaries \& Notations}\label{SSmaths}
Let \(\mathcal{Y}\) be a separable Hilbert space over \(\mathbb{R}\), with its inner product and norm denoted by \(\langle\cdot,\cdot\rangle_\mathcal{Y}\) and \(\lVert\cdot\rVert_\mathcal{Y}\) respectively. We denote by \(\mathscr{Y}\) the Borel \(\sigma\)-algebra of \(\mathcal{Y}\), i.e. the \(\sigma\)-algebra generated by the open subsets of \(\mathcal{Y}\). Let \((\mathcal{X},\mathscr{X})\) be a measurable set, and \(Q\) a probability measure on it. 

\paragraph{Bochner Integration} A function \(g:\mathcal{X}\rightarrow\mathcal{Y}\) is said to be \textit{Bochner-integrable} with respect to \(Q\) if \(g\) is strongly measurable and if \(\lVert g\rVert_\mathcal{Y}\) is \(Q\)-integrable \citep[p.15, Definition 35]{dinculeanu2000vector}, and denote its Bochner integral by \(\int gdQ\in\mathcal{Y}\). We denote the space of Bochner \(Q\)-integrable functions by \(L^1(\mathcal{X},Q;\mathcal{Y})\). Further, for \(1\leq p<\infty\), we denote by \(L^p(\mathcal{X},Q;\mathcal{Y})\) the space of functions \(g:\mathcal{X}\rightarrow\mathcal{Y}\) such that \(\int\lVert g\rVert^p_\mathcal{Y}dQ<\infty\), and denote the corresponding seminorm by \(\lVert g\rVert_{p,Q}^p=\int\lVert g\rVert^p_\mathcal{Y}dQ\). The case \(p=2\) is a special case, where \(L^2(\mathcal{X},Q;\mathcal{Y})\) can be equipped with a semi-inner product \(\langle g_1,g_2\rangle_{2,Q}=\int\langle g_1,g_2\rangle_\mathcal{Y}dQ\). Finally, we denote by \(L^\infty(\mathcal{X};\mathcal{Y})\) the space of functions \(g:\mathcal{X}\rightarrow\mathcal{Y}\) such that the uniform norm \(\left\lVert g\right\rVert_\infty=\sup_{x\in\mathcal{X}}\left\lVert g(x)\right\rVert_\mathcal{Y}\) is bounded. Following \citet[p.16]{vandegeer2000empirical}, we do not consider the essential supremum (which depends on the measure \(Q\)), but the supremum over \textit{all} \(x\in\mathcal{X}\), so that the uniform norm does not depend on any measure. 

\paragraph{Taylor's Theorem for Vector-Valued Functions} The notions of (partial) differentiation and smoothness for \(\mathcal{Y}\)-valued functions are central in our bounds for entropy of smooth functions (see Appendix \ref{Sdiffcalcfull} for more details, and \citet{cartan1967calcul,coleman2012calculus} for full expositions). Suppose that \(U\) is an open subset of \(\mathbb{R}^d\), and denote the Euclidean norm in \(\mathbb{R}^d\) by \(\lVert\cdot\rVert\). For \(m\in\mathbb{N}\) and an \(m\)-times differentiable function \(g:U\rightarrow\mathcal{Y}\), we write \(g^{(m)}\) for the \(m^\text{th}\) derivative of \(g\), an \(m\)-linear operator from \(\mathbb{R}^d\) into \(\mathcal{Y}\) (see Appendix \ref{Sdiffcalcfull}). We state the extension of Taylor's theorem to functions with values in \(\mathcal{Y}\), with Lagrange's form of the remainder. To this end, for \(a,b\in\mathbb{R}^d\), define the \textit{segment} joining \(a\) and \(b\) as the set \([a,b]=\{x\in\mathbb{R}^d:x=va+(1-v)b,v\in[0,1]\}\) \citep[p.51]{coleman2012calculus}. 
\begin{theorem}[{\citet[p.77, Th\'eor\`eme 5.6.2]{cartan1967calcul}}]\label{Ttaylorlagrange}
	Suppose that \(g:U\rightarrow\mathcal{Y}\) is \((m+1)\)-times differentiable, that the segment \([a,a+h]\) is contained in \(U\) and that, for some \(K>0\), we have \(\lVert g^{(m+1)}(x)\rVert_\textnormal{op}\leq K\) for all \(x\in U\). Then
	\[\Bigg\lVert g(a+h)-\sum^m_{k=0}\frac{1}{k!}g^{(k)}(a)((h)^k)\Bigg\rVert_\mathcal{Y}\leq K\frac{\left\lVert h\right\rVert^{m+1}}{(m+1)!},\]
	where we wrote \((h)^k=(h,...,h)\in(\mathbb{R}^d)^k\) for \(k=1,...,m\).
\end{theorem}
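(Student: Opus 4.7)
The natural strategy is to reduce to the scalar-valued case by duality. For any continuous linear functional \(y^*\in\mathcal{Y}^*\) with operator norm at most one, consider the composition \(\tilde{g}:=y^*\circ g:U\to\mathbb{R}\). Since \(y^*\) is continuous and linear, it commutes with differentiation of vector-valued functions, so \(\tilde{g}\) is \((m+1)\)-times differentiable and \(\tilde{g}^{(k)}(x)\) is the scalar \(k\)-linear form \((v_1,\dots,v_k)\mapsto y^*(g^{(k)}(x)(v_1,\dots,v_k))\).

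Next I would apply the classical one-variable Taylor theorem with Lagrange remainder to the scalar function \(t\mapsto\tilde{g}(a+th)\) on \([0,1]\), producing a \(\theta\in(0,1)\) such that, writing \(c:=a+\theta h\in[a,a+h]\subset U\),
\[\tilde{g}(a+h)-\sum_{k=0}^m\frac{1}{k!}\tilde{g}^{(k)}(a)((h)^k)=\frac{1}{(m+1)!}\tilde{g}^{(m+1)}(c)((h)^{m+1}).\]
Unpacking the right-hand side and chaining the bounds \(\lVert y^*\rVert\leq 1\), \(\lVert g^{(m+1)}(c)\rVert_{\textnormal{op}}\leq K\), and \(\lVert h\rVert\) gives
\[\Biggl|y^*\Biggl(g(a+h)-\sum_{k=0}^m\frac{1}{k!}g^{(k)}(a)((h)^k)\Biggr)\Biggr|\leq\frac{K\lVert h\rVert^{m+1}}{(m+1)!}.\]
Since this holds uniformly for every \(y^*\) in the closed unit ball of \(\mathcal{Y}^*\), taking the supremum recovers the \(\mathcal{Y}\)-norm of the remainder by the Hahn-Banach theorem (or, as \(\mathcal{Y}\) is a Hilbert space, simply by Riesz representation), yielding the claim.

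The main obstacle, and the reason this reduction is needed at all, is that the mean value theorem underlying the scalar Lagrange form fails for \(\mathcal{Y}\)-valued functions once \(\dim\mathcal{Y}\geq 2\): in general there is no \(c\) with \(g(a+h)-g(a)=g'(c)(h)\), so one cannot hope to produce a single Lagrange point for \(g\) itself. An alternative, more self-contained route avoids duality by setting \(F(t)=g(a+th)\) for \(t\in[0,1]\), defining \(R(t)=F(1)-\sum_{k=0}^m\frac{(1-t)^k}{k!}F^{(k)}(t)\) so that its derivative telescopes to \(R'(t)=-\frac{(1-t)^m}{m!}F^{(m+1)}(t)\), and then applying the vector-valued mean value inequality \(\lVert R(0)-R(1)\rVert_\mathcal{Y}\leq\int_0^1\lVert R'(t)\rVert_\mathcal{Y}\,dt\); integrating \(\frac{(1-t)^m}{m!}\) over \([0,1]\) produces the factor \(\frac{1}{(m+1)!}\) required for the sharp constant.
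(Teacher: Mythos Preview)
The paper does not prove this theorem; it simply cites \citet[Th\'eor\`eme~5.6.2]{cartan1967calcul} both in Section~\ref{SSmaths} and again in Appendix~\ref{Sdiffcalcfull}. So there is no ``paper's own proof'' to compare against, only Cartan's.

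Your proposal is correct, and in fact you sketch two valid proofs. The second route (define \(F(t)=g(a+th)\), set \(R(t)=F(1)-\sum_{k=0}^m\frac{(1-t)^k}{k!}F^{(k)}(t)\), observe the telescoping \(R'(t)=-\frac{(1-t)^m}{m!}F^{(m+1)}(t)\), and apply the Banach-space mean value inequality) is essentially Cartan's own argument, and works in any Banach space. Your first route via duality is a genuine alternative: it trades the vector-valued mean value inequality for the scalar Lagrange remainder plus Hahn--Banach. This buys you a slightly more elementary toolkit (only classical one-variable calculus is needed on the analysis side), at the cost of invoking a functional-analytic separation result and of the Lagrange point \(\theta\) depending on \(y^*\), which you correctly note is harmless since only the uniform bound is used. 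Both yield the same sharp constant \(1/(m+1)!\).
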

Write \(\mathbb{N}_0=\{0,1,2,...\}\), and for \(p=(p_1,...,p_d)\in\mathbb{N}^d_0\), write \([p]\vcentcolon=p_1+...+p_d\). Then we denote the \(p^\text{th}\) partial derivative \(\partial^{p_1}_1...\partial^{p_d}_dg(a)\) of \(g\) at \(a\in U\) as \(D^pg(a)\in\mathcal{Y}\). For each \(k=1,...,m+1\), \(g^{(k)}(a)((h)^k)=\sum^d_{l_1,...,l_k=1}h_{l_1}...h_{l_k}\partial_{l_1}...\partial_{l_k}g(a)=\sum_{[p]=k}\frac{k!h^p}{p!}D^pg(a)\), where we wrote \(h^p\) as a shorthand for \(h_1^{p_1}...h_d^{p_d}\) and \(p!\) for \(p_1!...p_d!\). Hence, using partial derivatives, we can express Taylor's theorem above as
\[\Bigg\lVert g(a+h)-\sum_{[p]\leq m}\frac{h^p}{p!}D^pg(a)\Bigg\rVert_\mathcal{Y}\leq K\frac{\left\lVert h\right\rVert^{m+1}}{(m+1)!}.\]

\paragraph{Metric Spaces, Covering Numbers and Dimensions} Finally, we introduce some notions from the theory of metric spaces. In particular, covering numbers play a central role in entropy discussions, and different notions of dimensions based on covering numbers will be used to restrict the range of partial derivatives of functions, leading up to entropy bounds in our main results (Section \ref{Ssmooth}).

Suppose \((\mathcal{Z},\rho)\) is a metric space. For \(r>0\) and \(z_0\in\mathcal{Z}\), the \textit{ball of radius \(r\) centred at \(z_0\)} is \(\mathcal{B}(z_0,r)=\{z\in\mathcal{Z}:\rho(z,z_0)\leq r\}\). For any \(\delta>0\), the \textit{\(\delta\)-covering number} of \((\mathcal{Z},\rho)\), denoted by \(N(\delta,\mathcal{Z},\rho)\), is the minimum number of balls of radius \(\delta\) with centres in \(\mathcal{Z}\) required to cover \(\mathcal{Z}\), i.e. the minimal \(N\) such that there exists a set \(\{z_1,...,z_N\}\subset\mathcal{Z}\) such that for all \(z\in\mathcal{Z}\), there exists a \(j=j(z)\in\{1,...,N\}\) with \(\rho(z,z_j)\leq\delta\) (we take \(N(\delta,\mathcal{Z},\rho)=\infty\) if no finite covering by closed balls with radius \(\delta\) exists). We say that \(\mathcal{Z}\) is \textit{totally bounded} if \(N(\delta,\mathcal{Z},\rho)<\infty\) for all \(\delta>0\). We define the \textit{\(\delta\)-entropy} as \(H(\delta,\mathcal{Z},\rho)=\log N(\delta,\mathcal{Z},\rho)\). 

Let \(E\) be a subset of \((\mathcal{Z},\rho)\). The \textit{upper box-counting dimension} of \(E\) is
\[\tau_\text{box}(E)\vcentcolon=\limsup_{\delta\rightarrow0}\frac{H(\delta,E,\rho)}{-\log\delta}\]
\citep[p.32, Definition 3.1]{robinson2010dimensions}. It is immediate from the definition \citep[p.32, (3.3)]{robinson2010dimensions} that if \(\tau>\tau_\text{box}(E)\), then there exists \(\delta_0>0\) such that for all \(\delta<\delta_0\),
\[N(\delta,E,\rho)<\delta^{-\tau}.\tag{box}\]
A subset \(E\) of \((\mathcal{Z},\rho)\) is said to be \textit{\((M,\tau)\)-homogeneous} (or simply \textit{homogeneous}) if the intersection of \(E\) with any closed ball of radius \(R\) can be covered by at most \(M\left(\frac{R}{r}\right)^\tau\) closed balls of smaller radius \(r\), i.e. \(N(r,\mathcal{B}(z,R)\cap E,\rho)\leq M\left(\frac{R}{r}\right)^\tau\) for all \(z\in E\) and \(R>r\) \citep[p.83, Definition 9.1]{robinson2010dimensions}. The \textit{Assouad dimension} \citep[p.85, Definition 9.5]{robinson2010dimensions}, sometimes also known as the \textit{doubling dimension}, of \(E\) is
\[\tau_\text{asd}(E)\vcentcolon=\inf\{\tau:E\text{ is }(M,\tau)\text{-homogeneous for some }M\geq1\}.\]

\section{Empirical Process Theory for Functions Taking Values in a Hilbert Space}\label{Sempiricalprocesstheory}
Take \((\Omega,\mathscr{F},\mathbb{P})\) as the underlying probability space. Let \(X:\Omega\rightarrow\mathcal{X}\) be a random variable, and let \(X_1,X_2,...\) be i.i.d. copies of \(X\). Denote by \(P\) its distribution, i.e. for \(A\in\mathscr{X}\), \(P(A)=\mathbb{P}(X^{-1}(A))\), and by \(P_n\) the empirical measure on \(\mathcal{X}\) based on \(X_1,...,X_n\), i.e.
\[P_n=\frac{1}{n}\sum^n_{i=1}\delta_{X_i},\qquad\text{where, for }A\in\mathscr{X},\delta_{X_i}(A)=\begin{cases}0&\text{if }X_i\notin A\\1&\text{if }X_i\in A\end{cases}.\]
For a function \(g\in L^1(\mathcal{X},Q;\mathcal{Y})\), we adopt the notation \(Qg=\int gdQ\). Hence,
\[Pg=\int gdP\qquad\text{and}\qquad P_ng=\frac{1}{n}\sum^n_{i=1}g(X_i).\]
Note that the integral \(Pg\) is a Bochner integral, and that we have \(Pg,P_ng\in\mathcal{Y}\). Now, for fixed \(g\), the law of large numbers in Hilbert (more generally, Banach) spaces \citep{mourier1953elements} tells us that \(P_ng\) converges to \(Pg\). One of the pillars of empirical process theory is to consider the convergence of \(P_ng\) to \(Pg\) not for a fixed \(g\), but uniformly over a class of functions. Let \(\mathcal{G}\subset L^1(\mathcal{X},P;\mathcal{Y})\). For a measure \(Q\) on \(\mathcal{X}\), we denote \(\left\lVert Q\right\rVert_\mathcal{G}\vcentcolon=\sup_{g\in\mathcal{G}}\left\lVert Qg\right\rVert_\mathcal{Y}\). 
\begin{definition}\label{Dglivenkocantelli}
	We say that the class \(\mathcal{G}\) is a \textit{Glivenko Cantelli (GC)} class, or that it satisfies the \textit{uniform law of large numbers} (with respect to the measure \(P\)) if \(\left\lVert P_n-P\right\rVert_\mathcal{G}=\sup_{g\in\mathcal{G}}\left\lVert P_ng-Pg\right\rVert_\mathcal{Y}\stackrel{P}{\rightarrow}0\). 
\end{definition}
Definition \ref{Dglivenkocantelli} could have been defined in terms of the weak convergence in Hilbert spaces, i.e. \(y_n\rightarrow y_0\) if \(\langle y,y_n\rangle_\mathcal{Y}\rightarrow\langle y,y_0\rangle_\mathcal{Y}\) for every \(y\in\mathcal{Y}\). In this paper, we only consider strong (norm) convergence. Next, we define the empirical process and the asymptotic equicontinuity.
\begin{definition}\label{Dempiricalprocess}
	We regard \(\left\{\nu_n(g)=\sqrt{n}\left(P_n-P\right)g:g\in\mathcal{G}\right\}\) as a stochastic process with values in \(\mathcal{Y}\) indexed by \(\mathcal{G}\), and call it the \textit{empirical process}. 
	
	We say that the empirical process \(\left\{\nu_n(g):g\in\mathcal{G}\right\}\) is \textit{asymptotically equicontinuous} at \(g_0\in\mathcal{G}\) if, for every sequence \(\left\{\hat{g}_n\right\}\subset\mathcal{G}\) with \(\left\lVert\hat{g}_n-g_0\right\rVert_{2,P}\stackrel{P}{\rightarrow}0\), we have \(\left\lVert\nu_n\left(\hat{g}_n\right)-\nu_n\left(g_0\right)\right\rVert_\mathcal{Y}\stackrel{P}{\rightarrow}0\). 
\end{definition}

Some of the first steps in empirical process theory are the symmetrisation and chaining techniques, and using them to prove uniform law of large numbers and asymptotic equicontinuity for classes of functions that satisfy certain entropy conditions. We provide the adaptation of some of these results for vector-valued function classes but defer them to Appendix \ref{Sempiricalprocessesfull}, because, while strictly speaking novel, the statements and proofs of these results carry over from the case of real-valued function classes with only minor adjustments, in particular with concentration inequalities for vector-valued random variables \citep{pinelis1992approach}. 

A more challenging task, as mentioned in the Introduction, is to bound entropies of vector-valued function classes, and the main results of this paper will focus on this problem (Section \ref{Ssmooth}). In the usual theory of empirical processes with real-valued functions, there are two major tools. The first is to consider the entropy with respect to the empirical measure \(P_n\). One usually requires this entropy to be uniformly bounded over all realisations of the samples \(X_1,...,X_n\), and the most widely-used example of function classes that satisfy this property are the celebrated \textit{Vapnik-Chervonenkis (VC) subgraph classes} \citep[Section 2.6]{vandervaart1996weak}. The second tool is what is known as \textit{entropy with bracketing} with respect to the underlying measure \(P\) (see, for example, \citet[p.122, Theorem 2.4.1 and p.129, Section 2.5.2]{vandervaart1996weak}, \citet[Sections 3.1 and 5.5]{vandegeer2000empirical} and \citet[Chapter 7]{dudley2014uniform}). However, both VC subgraph classes and entropy with bracketing make explicit use of the fact that the output space \(\mathbb{R}\) is \textit{totally-ordered}, and makes use of objects such as \(\{x\in\mathcal{X}:x\leq g(x_0)\}\) and \(\{x\in\mathcal{X}:g_1(x_0)\leq x\leq g_2(x_0)\}\), where \(g,g_1,g_2\in\mathcal{G}\) and \(x_0\in\mathcal{X}\). A direct extension is clearly not possible when our output space \(\mathcal{Y}\) has any dimension greater than 1, and an attempt at an extension is even more difficult when \(\mathcal{Y}\) is infinite-dimensional. In this paper, we do not investigate whether it is possible to obtain meaningful results by extending these ideas, and leave it for future work. 

We mention that in this work, we overlook the problem of measurability, which arise as we take suprema over possibly uncountable sets. This is commonly done in works treating statistical applications of empirical processes (see, e.g. \citet[p.21, Section 2.5]{vandegeer2000empirical}, \citet[p.7, first paragraph of Section 2]{bartlett2005local} and \citet[p.5, last paragraph of Section 1]{yousefi2018local}). We either assume that function classes and underlying distributions satisfy conditions that ensure measurability, or that notions of outer probabilities and expectations are used instead, as in \citet{vandervaart1996weak} and \citet{kosorok2008introduction}. 

\section{Entropy of Classes of Smooth Vector-Valued Functions}\label{Ssmooth}
In the usual empirical process theory with real-valued functions, classes of smooth functions on compact domains are some of the most frequently used examples that satisfy good entropy conditions \citep[p.154, Example 9.3.2]{vandegeer2000empirical}, \citep[Section 2.7.1]{vandervaart1996weak}, \citep[Section 8.2]{dudley2014uniform}. In this section, we give analogues of these results when the output space is the (not necessarily finite-dimensional) Hilbert space \(\mathcal{Y}\). 

Let \(m\in\mathbb{N}\); this will determine the smoothness of our function class. Let \(d\geq1\), and let us take as our input space the unit cube in \(\mathbb{R}^d\), \(\mathcal{X}=\{x\in\mathbb{R}^d:0\leq x_j\leq 1,j=1,...,d\}\); this is only to simplify the exposition, and the subsequent results will clearly hold for any compact subsets of \(\mathbb{R}^d\).  

In order to bound the entropy of classes of smooth real-valued functions, one bounds the absolute values of the range and partial derivatives of the function class. When the output space is \(\mathcal{Y}\), in particular, if \(\mathcal{Y}\) has infinite dimensions, bounding the norm of the range and partial derivatives is useless, because balls in infinite-dimensional spaces are not totally bounded. Therefore, to have any hope of bounding the entropy of function classes taking values in \(\mathcal{Y}\), the very least we need to do is to find a totally bounded subset \(B\subset\mathcal{Y}\), and restrict our range and partial derivatives therein. 

Denote by \(\mathcal{G}_B^m\) the set of \(m\)-times differentiable functions \(g:\mathcal{X}\rightarrow\mathcal{Y}\) whose partial derivatives \(D^pg:\mathcal{X}\rightarrow\mathcal{Y}\) of orders \([p]\leq m\) exist everywhere on the interior of \(\mathcal{X}\), and such that \(D^pg(x)\in B\) for all \(x\in\mathcal{X}\) and \([p]\leq m\), where \(D^0g=g\). We present three results bounding \(H(\delta,\mathcal{G}^m_B,\lVert\cdot\rVert_\infty)\) for \(\delta>0\) sufficiently small, each with different assumptions on \(B\). Theorem \ref{Tentropyassouad} assumes that \(B\) is homogeneous, i.e. we impose local entropy conditions. In Theorems \ref{Tentropybox} and \ref{Tentropyexponential}, we impose global entropy conditions on \(B\), the former with finite upper box-counting dimension, and the latter with \(N(\delta,B,\lVert\cdot\rVert_\mathcal{Y})\) allowed to grow exponentially as \(\delta\) decreases. Proofs are deferred to Section \ref{SSproofs}. 
\begin{theorem}\label{Tentropyassouad}
	Let \(B\subset\mathcal{Y}\) be totally bounded and \((M,\tau_\textnormal{asd})\)-homogeneous. Then for sufficiently small \(\delta>0\), there exists some constant \(K\) depending on \(K_B\), \(m\), \(d\), \(M\) and \(\tau_\textnormal{asd}\) such that
	\[H\left(\delta,\mathcal{G}^m_B,\lVert\cdot\rVert_\infty\right)\leq K\delta^{-\frac{d}{m}}.\]
\end{theorem}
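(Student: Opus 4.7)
The plan is to follow the classical Kolmogorov--Tikhomirov discretisation strategy for smooth function classes, but to replace the usual Euclidean $\epsilon$-net of the range with an iterative local covering that exploits the homogeneity of \(B\) to beat the logarithmic factor that would otherwise arise.

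\textbf{Step 1 (setup).} I impose a regular grid on \(\mathcal{X}=[0,1]^d\) with spacing \(h\) of order \(\delta^{1/m}\), giving \(\lesssim h^{-d}\sim\delta^{-d/m}\) grid points \(\{x_j\}\). For each integer \(k\) with \(0\le k\le m-1\), I set a partial-derivative accuracy \(\epsilon_k\) of order \(\delta h^{-k}=\delta^{1-k/m}\), so that \(\epsilon_k h^k\lesssim\delta\) across all \(k\). The constants are chosen so that \(\epsilon_{m-1}\) is comparable to \(h\). At each grid point \(x_j\) and each multi-index \(p\) with \([p]\le m-1\), I pick \(y_j^p\in B\) with \(\lVert y_j^p-D^pg(x_j)\rVert_\mathcal{Y}\le\epsilon_{[p]}\).

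\textbf{Step 2 (approximation).} On the cell containing \(x_j\), I define
\[\tilde g(x)=\sum_{[p]\le m-1}\frac{(x-x_j)^p}{p!}\,y_j^p.\]
Applying Taylor's theorem (Theorem \ref{Ttaylorlagrange}) with order \(m-1\), bounding \(\lVert g^{(m)}\rVert_{\mathrm{op}}\) by a constant times \(K_B=\sup_{y\in B}\lVert y\rVert_\mathcal{Y}\) since \(D^pg(x)\in B\) for \([p]\le m\), and adding in the \(\epsilon_{[p]}\)-errors term by term, yields \(\lVert g-\tilde g\rVert_\infty\le\delta\) once the hidden constants in \(h\) and \(\epsilon_k\) are set appropriately.

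\textbf{Step 3 (transport via homogeneity).} This is the crux. I order the grid points so that every \(x_{j+1}\) with \(j\ge1\) has some already-visited neighbour \(x_{j'}\) with \(\lVert x_{j+1}-x_{j'}\rVert\le h\sqrt d\). Applying Taylor's theorem to the vector-valued function \(D^pg\) around \(x_{j'}\) (note \(D^pg\) is \((m-[p])\)-times differentiable with all its derivatives in \(B\)), I form the predictor
\[z_{j+1}^p:=\sum_{[q]\le m-[p]-1}\frac{(x_{j+1}-x_{j'})^q}{q!}\,y_{j'}^{p+q}.\]
Term-by-term bookkeeping, using \(\epsilon_{[p]+[q]}h^{[q]}\sim\epsilon_{[p]}\) and the Taylor remainder \(K_Bh^{m-[p]}/(m-[p])!\sim\epsilon_{[p]}\), gives \(\lVert z_{j+1}^p-D^pg(x_{j+1})\rVert_\mathcal{Y}\le C\epsilon_{[p]}\) for some \(C=C(m,d,K_B)\). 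Therefore the admissible \(y_{j+1}^p\) lies in \(\mathcal{B}(z_{j+1}^p,(C+1)\epsilon_{[p]})\cap B\), and by \((M,\tau_{\mathrm{asd}})\)-homogeneity (extended to balls centred outside \(B\) at the cost of a factor \(2^{\tau_{\mathrm{asd}}}\)) this set has an \(\epsilon_{[p]}\)-covering number bounded by \(M\bigl(2(C+1)\bigr)^{\tau_{\mathrm{asd}}}\), a constant depending only on \(m,d,K_B,M,\tau_{\mathrm{asd}}\).

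\textbf{Step 4 (counting).} For the single initial grid point I use the global bound \(N(\epsilon_k,B,\lVert\cdot\rVert_\mathcal{Y})\le M\bigl(\mathrm{diam}(B)/\epsilon_k\bigr)^{\tau_{\mathrm{asd}}}\), which is itself a consequence of homogeneity. For each subsequent grid point and each of the (\(m,d\)-many) multi-indices of order \(\le m-1\), Step 3 contributes a constant. Taking logs,
\[H(\delta,\mathcal{G}_B^m,\lVert\cdot\rVert_\infty)\lesssim\log(1/\delta)+\delta^{-d/m},\]
and since \(\log(1/\delta)=o(\delta^{-d/m})\) as \(\delta\to0\), the claimed \(K\delta^{-d/m}\) bound holds for all sufficiently small \(\delta\), with \(K\) depending only on \(K_B, m, d, M, \tau_{\mathrm{asd}}\).

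The main obstacle is Step 3: one must simultaneously (i) Taylor-expand each vector-valued derivative \(D^pg\), not just \(g\), to the correct order \(m-[p]-1\), and (ii) tune the scales \(\epsilon_k\sim\delta h^{-k}\) so that both the Taylor remainder at order \(m-[p]\) and the propagated approximation errors are of the same order \(\epsilon_{[p]}\). Once this matching is in place, the homogeneity of \(B\) immediately kills the extraneous \(\log(1/\delta)\) factor that a naive net over \(B\) would produce, and the rest is counting.
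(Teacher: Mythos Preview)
Your proposal is correct and follows essentially the same strategy as the paper's proof: a Kolmogorov--Tikhomirov grid of size \(\sim\delta^{-d/m}\), Taylor approximation of each \(D^pg\) with scale \(\epsilon_{[p]}\sim\delta^{1-[p]/m}\), chaining through adjacent grid points via a Taylor predictor, and using homogeneity of \(B\) to bound the number of local corrections by a constant independent of \(\delta\). The only cosmetic difference is that you cover the predictor ball \(\mathcal{B}(z_{j+1}^p,(C+1)\epsilon_{[p]})\cap B\) afresh at each step (adaptive local nets), whereas the paper fixes a global \(\tfrac12\delta_{[p]}\)-cover \(\mathscr{A}_{[p]}\) of \(B\) upfront and then uses a minimality/pigeonhole argument to bound how many of its cells can meet the predictor ball; both routes yield the same constant-per-step bound and the same final rate.
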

Theorem \ref{Tentropyassouad} gives the same rate for \(\mathcal{G}^m_B\) as for smooth real-valued function classes \citep[p.288, Theorem 8.4(a)]{dudley2014uniform}, which is a special case of the set-up in Theorem \ref{Tentropyassouad}, since any bounded subset of \(\mathbb{R}\) is a homogeneous subset (with Assouad dimension at most 1). In fact, \citet[Theorem 8.4(a)]{dudley2014uniform} shows that this rate of \(\delta^{-\frac{d}{m}}\) cannot be improved, so the rate given in Theorem \ref{Tentropyassouad} is also optimal. We will later see from the proof that the dependence on \(\tau_\text{asd}\) is linear. 
\begin{theorem}\label{Tentropybox}
	Let \(B\) be a subset of \(\mathcal{Y}\) with finite upper box-counting dimension \(\tau_\textnormal{box}\). Then for sufficiently small \(\delta>0\), there exists some constant \(K\) depending on \(K_B\), \(m\), \(d\) and \(\tau_\textnormal{box}\) such that
	\[H\left(\delta,\mathcal{G}^m_B,\lVert\cdot\rVert_\infty\right)\leq K\delta^{-\frac{d}{m}}\log\left(\frac{1}{\delta}\right).\]
\end{theorem}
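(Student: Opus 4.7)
The plan is to follow the classical strategy of approximating each $g \in \mathcal{G}^m_B$ by a piecewise Taylor polynomial on a grid in $\mathcal{X}$, and then counting the possible approximants. I first partition the unit cube into subcubes of side length $\epsilon = c \delta^{1/m}$, where $c$ depends on $K_B := \sup_{y \in B}\lVert y\rVert_\mathcal{Y}$, $m$, and $d$; this gives $N_\textnormal{grid} \leq K'\delta^{-d/m}$ subcubes, each labelled by its lower-corner vertex $x_0$. On the subcube at $x_0$, I approximate $g$ by its Taylor polynomial of order $m-1$,
\[T_{x_0}(x) = \sum_{[p] \leq m-1} \frac{(x-x_0)^p}{p!} D^p g(x_0).\]
Since $D^p g(x) \in B$ for every $[p] \leq m$ and $x \in \mathcal{X}$, a routine estimate via the multinomial expansion bounds $\lVert g^{(m)}(x) \rVert_\textnormal{op}$ by $K_B d^m$ uniformly in $x$, so applying Theorem \ref{Ttaylorlagrange} (with order $m-1$ in place of $m$) yields a Taylor remainder of order $\epsilon^m$. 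The constant $c$ can then be chosen so that this remainder is at most $\delta/2$ on every subcube.

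The second step is to discretise the partial derivatives at each grid vertex using the box-counting dimension. Fix any $\tau > \tau_\textnormal{box}$; by the assumption (box) applied to $E = B$, for $\eta$ small enough we have $N(\eta, B, \lVert \cdot \rVert_\mathcal{Y}) \leq \eta^{-\tau}$. For each vertex $x_0$ and each $[p] \leq m-1$, I replace $D^p g(x_0)$ by its nearest neighbour $\widetilde{D}^p g(x_0)$ in a fixed $\eta$-net of $B$, forming the approximate polynomial $\widetilde{T}_{x_0}$. Since $|(x-x_0)^p| \leq \epsilon^{[p]} \leq 1$ on each subcube, the discretisation error is at most $\eta \sum_{[p] \leq m-1} 1/p!$, which is bounded by $\delta/2$ once $\eta$ is taken as a small constant multiple of $\delta$. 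The piecewise map $x \mapsto \widetilde{T}_{x_0}(x)$ (with $x_0$ the corner of the subcube containing $x$) is then within $\delta$ of $g$ in $\lVert \cdot \rVert_\infty$.

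Finally, I count. There are $C(m,d) = \binom{m-1+d}{d}$ multi-indices of order at most $m-1$, and hence at most $\eta^{-\tau C(m,d)}$ choices for the tuple $(\widetilde{D}^p g(x_0))_{[p] \leq m-1}$ at each vertex, giving at most $\eta^{-\tau C(m,d) N_\textnormal{grid}}$ piecewise polynomials in total. Taking logarithms and substituting $\eta \asymp \delta$ and $N_\textnormal{grid} \leq K' \delta^{-d/m}$ yields $H(\delta, \mathcal{G}^m_B, \lVert \cdot \rVert_\infty) \leq K \delta^{-d/m} \log(1/\delta)$ for a constant $K$ depending on $K_B$, $m$, $d$, and (through the choice of $\tau$) on $\tau_\textnormal{box}$, as required.

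The argument is essentially a direct adaptation of the scalar case, and the main conceptual point is understanding why the logarithm appears here but not in Theorem \ref{Tentropyassouad}. For $[p] < m$, the values $D^p g(x_0)$ and $D^p g(x_0')$ at neighbouring vertices differ only by $O(\epsilon)$ since $D^p g$ is Lipschitz with constant controlled by $K_B$, and in the homogeneous setting this means only $M(\epsilon/\eta)^{\tau_\textnormal{asd}}$ local net points are needed to cover the derivatives at $x_0'$ once those at $x_0$ are fixed; this local covering saves precisely the $\log(1/\delta)$ factor. Under the global box-counting assumption no such propagation is available, so the full $\eta$-net of $B$ must be used at every vertex. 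The main obstacle in turning the sketch into a complete proof is therefore not conceptual but simply the bookkeeping of how the constants $\epsilon$, $\eta$, the Taylor remainder, and the multinomial expansion combine into the final constant $K$.
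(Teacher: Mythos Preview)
Your proposal is correct and follows essentially the same approach as the paper: a piecewise Taylor approximation on a grid of $\asymp\delta^{-d/m}$ points, with all partial derivatives at each grid point discretised via a net of $B$ at a single scale $\asymp\delta$, and then counting using the box-counting bound $N(\eta,B)\leq\eta^{-\tau}$. The only cosmetic difference is that the paper phrases the count as a partition of $\mathcal{G}^m_B$ (so the resulting cover is automatically internal, as its definition of covering number requires), whereas your piecewise polynomials form an external cover---easily converted at the cost of a factor~$2$ in the radius.
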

\begin{theorem}\label{Tentropyexponential}
	Let \(B\) be a subset of \(\mathcal{Y}\) with \(N(\epsilon,B,\lVert\cdot\rVert_\mathcal{Y})\leq\exp\{M\epsilon^{-\tau_\textnormal{exp}}\}\) for some \(M,\tau_\textnormal{exp}>0\). Then for sufficiently small \(\delta>0\), there is some constant \(K\) depending on \(K_B\), \(m\), \(d\), \(M\) and \(\tau_\textnormal{exp}\) such that
	\[H\left(\delta,\mathcal{G}^m_B,\lVert\cdot\rVert_\infty\right)\leq K\delta^{-\left(\frac{d}{m}+\tau_\textnormal{exp}\right)}.\]
\end{theorem}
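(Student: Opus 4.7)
The plan is to follow the same template as in Theorems \ref{Tentropyassouad} and \ref{Tentropybox}: build an approximating cover of $\mathcal{G}^m_B$ by piecewise Taylor polynomials whose coefficients are drawn from a finite $\epsilon$-net of $B$, and then substitute in the exponential covering bound hypothesised here. The four steps are (i) partition $\mathcal{X}=[0,1]^d$ by a grid of mesh $h$; (ii) on each cell approximate $g$ by the Taylor polynomial $\sum_{[p]\leq m-1}\frac{(x-a)^p}{p!}D^pg(a)$ around the nearest grid point $a$; (iii) replace each coefficient $D^pg(a)\in B$ by its nearest neighbour in an $\epsilon$-net of $B$; (iv) balance $h$ and $\epsilon$ against $\delta$.

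For step (ii), the exponential covering hypothesis forces $B$ to be totally bounded and hence bounded, say with $K_B:=\sup_{y\in B}\lVert y\rVert_\mathcal{Y}<\infty$. Every partial derivative of $g$ of order up to $m$ therefore has $\mathcal{Y}$-norm at most $K_B$, and applying Theorem \ref{Ttaylorlagrange} with $m$ replaced by $m-1$ yields, for every $x$ in the cell with corner $a$,
\[\Bigl\lVert g(x)-\sum_{[p]\leq m-1}\frac{(x-a)^p}{p!}D^pg(a)\Bigr\rVert_\mathcal{Y}\leq C_1 h^m,\]
with $C_1$ depending only on $K_B, d, m$. For step (iii), let $\mathcal{N}_\epsilon\subset B$ be an $\epsilon$-net of $B$ with $|\mathcal{N}_\epsilon|\leq\exp\{M\epsilon^{-\tau_\textnormal{exp}}\}$ and pick, for each pair $(a,p)$ with $[p]\leq m-1$, a $\pi_{a,p}\in\mathcal{N}_\epsilon$ within $\epsilon$ of $D^pg(a)$. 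The piecewise polynomial $\tilde g$ built from the $\pi_{a,p}$ then approximates $g$ uniformly within $C_1 h^m+C_2\epsilon$, where $C_2=\sum_{[p]\leq m-1}\frac{1}{p!}$ depends only on $d,m$ (using $h\leq 1$).

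For step (iv), set $h$ of order $\delta^{1/m}$ and $\epsilon$ of order $\delta$ so that $\lVert g-\tilde g\rVert_\infty\leq\delta$. The number of distinct tuples $(\pi_{a,p})$ is at most $|\mathcal{N}_\epsilon|^N$, where $N$ counts pairs of grid points and multi-indices; since there are $O(h^{-d})$ grid points and $\binom{m-1+d}{d}$ multi-indices, $N\leq C_3\delta^{-d/m}$. Taking logarithms,
\[H(\delta,\mathcal{G}^m_B,\lVert\cdot\rVert_\infty)\leq N\cdot M\epsilon^{-\tau_\textnormal{exp}}\leq K\delta^{-(d/m+\tau_\textnormal{exp})}.\]
A standard outer-to-inner conversion, absorbing a factor of $2$ into $\delta$, allows the centres to be taken in $\mathcal{G}^m_B$ itself as required by the covering-number definition.

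The principal obstacle, as in Theorems \ref{Tentropyassouad} and \ref{Tentropybox}, is the bookkeeping involved in checking that the Taylor remainder and the coefficient-discretisation error jointly sum to at most $\delta$ for a suitable choice of constants, and that the resulting $\tilde g$ is close enough to some genuine element of $\mathcal{G}^m_B$. The genuinely new ingredient here is simply that the exponential covering bound $\log N(\epsilon,B,\lVert\cdot\rVert_\mathcal{Y})\leq M\epsilon^{-\tau_\textnormal{exp}}$, in place of the polynomial bound exploited for Theorem \ref{Tentropybox}, contributes a factor of $\delta^{-\tau_\textnormal{exp}}$ rather than $\log(1/\delta)$, and this factor combines multiplicatively with the grid count $\delta^{-d/m}$ to give the exponent $d/m+\tau_\textnormal{exp}$.
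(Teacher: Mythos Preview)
Your proposal is correct and follows essentially the same approach as the paper: grid discretisation of $\mathcal{X}$ at scale $\delta^{1/m}$, Taylor approximation of order $m-1$ at each grid point, discretisation of the coefficients $D^pg(a)$ via an $\epsilon$-net of $B$ with $\epsilon$ of order $\delta$, and then counting the resulting tuples. The paper's shared setup uses scale-dependent precisions $\delta_k=\delta/(2\Delta^k e^d)$ for the coefficients of order $k$, but in the proof of this particular theorem it immediately upper-bounds all of the $N_{[p]}$ by $N(\delta/(2e^d),B,\lVert\cdot\rVert_\mathcal{Y})$ anyway, so your single-$\epsilon$ simplification is harmless here and the two arguments coincide.
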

We can use results stated and proved in Appendix \ref{Sempiricalprocessesfull} to show that we have uniform law of large numbers over \(\mathcal{G}^m_B\), where \(B\) satisfies the conditions in any one of Theorems \ref{Tentropyassouad}, \ref{Tentropybox} or \ref{Tentropyexponential}. 
\begin{corollary}
	The function class \(\mathcal{G}^m_B\), where \(B\) is either homogeneous, has finite upper box-counting dimension or satisfies \(N(\epsilon,B,\lVert\cdot\rVert_\mathcal{Y})\leq\exp\{M\epsilon^{-\tau_\textnormal{exp}}\}\) for some \(\tau_\textnormal{exp}>0\), is Glivenko-Cantelli. 
\end{corollary}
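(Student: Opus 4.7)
The plan is to deduce the corollary as a direct application of a vector-valued analog of the classical Glivenko-Cantelli theorem, which I would expect in Appendix \ref{Sempiricalprocessesfull} (paralleling the real-valued statement, with Pinelis-type concentration replacing Hoeffding's inequality). Such a theorem should conclude $\lVert P_n-P\rVert_\mathcal{G}\stackrel{P}{\rightarrow}0$ whenever the class $\mathcal{G}$ has a $P$-integrable envelope and the empirical covering numbers satisfy $n^{-1}H(\delta,\mathcal{G},\lVert\cdot\rVert_{1,P_n})\stackrel{P}{\rightarrow}0$ for every $\delta>0$.

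Both requirements are inherited almost for free from the hypotheses on $B$ and from Theorems \ref{Tentropyassouad}, \ref{Tentropybox} and \ref{Tentropyexponential}. For the envelope, each of the three conditions on $B$ implies that $B$ is totally bounded in $\mathcal{Y}$ and hence bounded: in the homogeneous case this is assumed outright, while in the other two the finiteness of $N(\delta,B,\lVert\cdot\rVert_\mathcal{Y})$ at any sufficiently small $\delta>0$ already forces total boundedness. Writing $K_B:=\sup_{y\in B}\lVert y\rVert_\mathcal{Y}<\infty$, every $g\in\mathcal{G}^m_B$ then satisfies $\lVert g(x)\rVert_\mathcal{Y}\leq K_B$ pointwise, so the constant $K_B$ serves as a trivially $P$-integrable envelope.

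For the entropy condition, the pointwise inequality $\lVert g_1-g_2\rVert_{1,P_n}\leq\lVert g_1-g_2\rVert_\infty$ shows that any $\delta$-cover of $\mathcal{G}^m_B$ in uniform norm is also a $\delta$-cover under the empirical $L^1$ pseudometric, giving
\[H(\delta,\mathcal{G}^m_B,\lVert\cdot\rVert_{1,P_n})\leq H(\delta,\mathcal{G}^m_B,\lVert\cdot\rVert_\infty).\]
Each of Theorems \ref{Tentropyassouad}, \ref{Tentropybox} and \ref{Tentropyexponential} bounds the right-hand side by a deterministic, $n$-free finite quantity for all sufficiently small $\delta>0$; for larger $\delta$, finiteness is immediate from the envelope bound. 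Dividing by $n$ and letting $n\to\infty$ at $\delta$ fixed produces deterministic, hence probabilistic, convergence to zero.

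Since the substantive analytic work was carried in the entropy theorems themselves, the corollary presents no real obstacle; the only mild point of care is to align the precise form of the entropy hypothesis demanded by the chosen appendix theorem (whether in $\lVert\cdot\rVert_{1,P_n}$, $\lVert\cdot\rVert_{2,P_n}$ or $\lVert\cdot\rVert_\infty$), but the uniform-norm bound trivially dominates all such choices via the analogous estimate $\lVert g_1-g_2\rVert_{2,P_n}\leq\lVert g_1-g_2\rVert_\infty$.
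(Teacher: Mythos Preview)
Your proposal is correct and matches the paper's approach exactly: the paper invokes Theorem \ref{Tglivenkocantelli} from Appendix \ref{Sempiricalprocessesfull}, which requires precisely the integrable envelope and the condition $\frac{1}{n}H(\delta,\mathcal{G},\lVert\cdot\rVert_{1,P_n})\stackrel{P}{\rightarrow}0$ that you verify. Your observation that the deterministic sup-norm entropy bounds from Theorems \ref{Tentropyassouad}--\ref{Tentropyexponential} dominate the empirical $L^1$ entropy, together with the constant envelope $K_B$ coming from total boundedness of $B$, is exactly what is needed.
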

Further, the empirical process defined by \(\mathcal{G}^m_B\) (c.f. Definition \ref{Dempiricalprocess}) is asymptotically equicontinuous. 
\begin{corollary}
	Suppose that \(B\) is either homogeneous, has finite upper box-counting dimension or satisfies \(N(\epsilon,B,\lVert\cdot\rVert_\mathcal{Y})\leq\exp\{M\epsilon^{-\tau_\textnormal{exp}}\}\) for some \(\tau_\textnormal{exp}>0\). Then the empirical process \(\{\nu_n(g)=\sqrt{n}(P_n-P)g:g\in\mathcal{G}^m_B\}\) defined by \(\mathcal{G}^m_B\) is asymptotically equicontinuous.
\end{corollary}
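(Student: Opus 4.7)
The plan is to invoke the sufficient condition for asymptotic equicontinuity based on entropy integrals, which is the vector-valued analog of the classical chaining-based argument developed in Appendix \ref{Sempiricalprocessesfull}. First, I would observe that for any probability measure \(P\) on \(\mathcal{X}\) and any \(g_1,g_2:\mathcal{X}\rightarrow\mathcal{Y}\), we have \(\lVert g_1-g_2\rVert_{2,P}\leq\lVert g_1-g_2\rVert_\infty\), so every \(\delta\)-cover of \(\mathcal{G}^m_B\) under \(\lVert\cdot\rVert_\infty\) is automatically a \(\delta\)-cover under \(\lVert\cdot\rVert_{2,P}\). Hence the entropy bounds in Theorems \ref{Tentropyassouad}, \ref{Tentropybox}, and \ref{Tentropyexponential} pass verbatim to entropies in the \(L^2(P)\) seminorm, uniformly in \(P\). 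Since \(B\) is totally bounded, the envelope \(\sup_{g\in\mathcal{G}^m_B}\lVert g\rVert_\infty\) is finite, which furnishes the integrability condition required by the chaining theorem.

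Next, I would apply the asymptotic equicontinuity theorem from Appendix \ref{Sempiricalprocessesfull}, which, as in the real-valued case, reduces the claim to finiteness of the entropy integral \(\int_0^\eta\sqrt{H(\delta,\mathcal{G}^m_B,\lVert\cdot\rVert_{2,P})}\,d\delta\) together with its vanishing as \(\eta\rightarrow 0\). In the homogeneous case the integrand is of order \(\delta^{-d/(2m)}\), and in the upper box-counting case of order \(\delta^{-d/(2m)}\sqrt{\log(1/\delta)}\); both are integrable on \((0,1]\) as soon as \(2m>d\). In the exponential-covering case the integrand is of order \(\delta^{-(d/m+\tau_\textnormal{exp})/2}\), which is integrable provided \(d/m+\tau_\textnormal{exp}<2\). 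Under these conditions on the smoothness parameter \(m\) relative to \(d\) and \(\tau_\textnormal{exp}\), the entropy integral is finite and tends to zero with \(\eta\), and the appendix theorem delivers the conclusion.

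The main conceptual step---passing from an \(\lVert\cdot\rVert_\infty\) entropy bound to an \(L^2(P)\) entropy bound---is disposed of for free by the trivial domination above. The substantive technical work was already carried out in Theorems \ref{Tentropyassouad}--\ref{Tentropyexponential} (which build on Taylor's theorem \ref{Ttaylorlagrange} combined with the dimensional control of \(B\)) and in the vector-valued chaining theorem of Appendix \ref{Sempiricalprocessesfull} (which rests on Pinelis-type concentration inequalities for Hilbert-space-valued sums). Beyond these ingredients, the proof is routine bookkeeping, essentially amounting to checking the entropy integral in each of the three regimes.
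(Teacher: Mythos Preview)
Your approach is precisely what the paper intends: although no explicit proof is given for this corollary, it is meant to follow from Theorem~\ref{Tasymptoticequicontinuity} in Appendix~\ref{Sempiricalprocessesfull} after transferring the \(\lVert\cdot\rVert_\infty\) entropy bounds of Theorems~\ref{Tentropyassouad}--\ref{Tentropyexponential} to the \(\lVert\cdot\rVert_{2,Q}\) seminorm via the trivial domination \(\lVert\cdot\rVert_{2,Q}\leq\lVert\cdot\rVert_\infty\), together with the bounded envelope \(G\leq K_B\), exactly as you describe. You are also right---and in fact more careful than the paper---to flag that the uniform entropy integral \(\int_0^1\sqrt{H(u)}\,du\) converges only under the additional restrictions \(d<2m\) in the homogeneous and box-counting cases and \(d/m+\tau_{\mathrm{exp}}<2\) in the exponential case; the corollary as stated in the paper omits these hypotheses and is therefore slightly overstated.
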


\subsection{Examples}\label{SSexamples}
With these results in hand, it is now of interest to investigate which interesting examples of output space \(\mathcal{Y}\) and subsets \(B\) satisfy the conditions of Theorems \ref{Tentropyassouad}, \ref{Tentropybox} and \ref{Tentropyexponential}. 
\begin{example}\label{Emulticlass}
	Suppose that \(\mathcal{Y}\) is a finite-dimensional Hilbert space, say with dimension \(d_\mathcal{Y}\). Then balls are totally bounded, so we can let \(B\) be of the form \(B=\{y\in\mathcal{Y}:\lVert y\rVert_\mathcal{Y}\leq K\}\) for any \(K>0\). Moreover, subsets of finite-dimensional spaces are homogeneous with Assouad dimension at most \(d_\mathcal{Y}\) \citep[p.85, Lemma 9.6(iii)]{robinson2010dimensions}, and so we can apply Theorem \ref{Tentropyassouad}. The case \(\mathcal{Y}=\mathbb{R}\) corresponds to the usual regression with real-valued output. If \(\mathcal{Y}=\mathbb{R}^{d_\mathcal{Y}}\), it corresponds to the multi-task learning setting \citep{evgeniou2005learning,yousefi2018local,xu2019survey}. 
\end{example}
A prominent application of vector-valued output spaces will be when we have functional responses; example data sets include speech, diffusion tensor imaging, mass spectrometry and glaucoma (see \citet{morris2015functional,kadri2016operator} and references therein). Let \(\mathcal{X}'\) be a domain, and \(\mathcal{Y}=L^2(\mathcal{X}',P';\mathbb{R})\) the space of real-valued functions that are square-integrable with respect to some distribution \(P'\) on \(\mathcal{X}'\). By considering interesting subsets of \(\mathcal{Y}\), we can derive bounds on the entropy \(H(\delta,\mathcal{G}^m_B,\lVert\cdot\rVert_\infty)\) using Theorems \ref{Tentropyassouad}, \ref{Tentropybox} and \ref{Tentropyexponential}. The next 4 examples are considered in this set-up. 
\begin{example}\label{Efindim}
	Suppose that \(\psi_1,...,\psi_r\in\mathcal{Y}\), and let \(B=\{f=\theta_1\psi_1+...+\theta_r\psi_r:\theta=(\theta_1,...,\theta_r)^T\in\mathbb{R}^r,\lVert f\rVert_{2,P'}\leq R\}\)
	Then \citet[p.20, Lemma 2.5]{vandegeer2000empirical} tells us that \(B\) is homogeneous, and so Theorem \ref{Tentropyassouad} applies. This corresponds to the case where the responses are finite-dimensional functions, or adopting the nomenclature of \citet[p.152, Example 9.3.1]{vandegeer2000empirical}, \say{linear regressors}.
\end{example}
\begin{example}\label{Eassouad}
	More generally, function classes with finite Assouad dimensions have been considered in classification problems, and their generalisation properties analysed \citep{li2007learnability,bshouty2009using}. If these functions form the responses of a regression problem, then Theorem \ref{Tentropyassouad} can again be applied. Examples of such function classes include halfspaces with respect to the uniform distribution (i.e. where \(P'\) is the uniform distribution) \citep[Proposition 6]{bshouty2009using}. 
\end{example}
\begin{example}\label{Esmooth}
	Let \(\mathcal{X}'\) be compact in \(\mathbb{R}^{d'}\) (in general, \(d\neq d'\)), and suppose that \(B\subset\mathcal{Y}\) consists of smooth functions. More specifically, for some \(m'\in\mathbb{N}\) and \(M>0\), let \(B\) be the set of all \(m'\)-times differentiable functions \(f:\mathcal{X}'\rightarrow\mathbb{R}\) whose partial derivatives \(D^qf:\mathcal{X}'\rightarrow\mathbb{R}\) of orders \([q]\leq m'\) exist everywhere on the interior of \(\mathcal{X}'\), and such that \(\lvert D^qf(x')\rvert\leq M\) for all \(x'\in\mathcal{X}'\) and \([q]\leq m'\). Then applying the result for real-valued function classes \citep[p.288, Theorem 8.4]{dudley2014uniform} (or Theorem \ref{Tentropyassouad} with \(\mathcal{Y}=\mathbb{R}\) and \(B\) being the ball of radius \(M\)), we have \(N(\delta,B,\lVert\cdot\rVert_\infty)\leq\exp\{K'\delta^{-\frac{d'}{m'}}\}\) for some constant \(K'>0\). This in turn allows us to apply Theorem \ref{Tentropyexponential} to bound the entropy of \(\mathcal{G}^m_B\) as
	\[H(\delta,\mathcal{G}^m_B,\lVert\cdot\rVert_\infty)\leq K\delta^{-\left(\frac{d}{m}+\frac{d'}{m'}\right)}\]
	for some constant \(K>0\). So when the output space is itself a class of smooth (real-valued) functions, the smoothness of the two function classes simply add in the negative exponent of \(\delta\) in the entropy. 
\end{example}
\begin{example}\label{Erkhs}
	Let \(B\) be a ball in a reproducing kernel Hilbert space (RKHS) with a \(\mathcal{C}^\infty\) Mercer kernel (see \citet{cucker2002mathematical} for details), then \citet[Theorem D]{cucker2002mathematical} tells us that for some constant \(K'\), we have \(N(\delta,B,\lVert\cdot\rVert_\infty)\leq\exp\{K'\delta^{-\frac{2d}{h}}\}\) for any \(h>d\). Then we can again apply Theorem \ref{Tentropyexponential} to bound \(H(\delta,\mathcal{G}^m_B,\lVert\cdot\rVert_\infty)\) by \(K\delta^{-(\frac{d}{m}+\frac{2d}{h})}\) for some constant \(K\) and any \(h>d\). 
\end{example}

\subsection{Proofs of the Main Results}\label{SSproofs}
We now prove Theorems \ref{Tentropyassouad}, \ref{Tentropybox} and \ref{Tentropyexponential}.
The idea is to approximate smooth functions by piecewise polynomials \citep{kolmogorov1955bounds}. We start with some development shared by the three Theorems. 

As \(B\) is totally bounded, for some \(K_B>0\), \(\lVert y\rVert_\mathcal{Y}\leq K_B\) for all \(y\in B\). Let \(g\in\mathcal{G}^m_B\), \(x\in\mathcal{X}\), \(x+h\in\mathcal{X}\) and \(p\in\mathbb{N}^d_0\) with \([p]\leq m-1\). Then \(D^pg\) is \((m-[p])\)-times differentiable, and \(\lVert (D^pg)^{(m-[p])}(x)\rVert_\text{op}=\lVert\sum_{[q]\leq m-[p]}\frac{(m-[p])!}{q!}D^{p+q}g(x)\rVert_\mathcal{Y}\leq d^{m-[p]}K_B\). Hence, 
\[D^pg(x+h)=\sum_{[q]\leq m-1-[p]}\frac{h^q}{q!}D^{p+q}g(x)+R_p(g,x,h)\tag{*}\]
by Taylor's Theorem (Theorem \ref{Ttaylorlagrange}), where \(\lVert R_p(g,x,h)\rVert_\mathcal{Y}\leq d^{m-[p]}K_B\frac{\lVert h\rVert^{m-[p]}}{(m-[p])!}\). So there is a constant \(K_1=K_1(K_B,m,d)\geq1\) such that, for all \(g\in\mathcal{G}^m_B\), \(x\in\mathcal{X}\), \(x+h\in\mathcal{X}\) and \(p\in\mathbb{N}^d_0\) with \([p]\leq m-1\),
\[\left\lVert R_p(g,x,h)\right\rVert_\mathcal{Y}\leq K_1\left\lVert h\right\rVert^{m-[p]}.\tag{**}\]	
Let \(\Delta\vcentcolon=(\frac{\delta}{4K_1})^{\frac{1}{m}}\), and \(x_{(1)},...,x_{(L)}\) a \(\frac{\Delta}{2}\)-net in \(\mathcal{X}\), i.e. \(\sup_{x\in\mathcal{X}}\{\inf_{1\leq l\leq L}\lVert x-x_{(l)}\rVert\}\leq\frac{\Delta}{2}\). By decomposing \(\mathcal{X}\) into cubes of side \(\ceil*{\frac{d^{1/2}}{\Delta}}^{-1}\) and taking the \(x_{(l)}\) as the centres thereof, we can take
\[L\leq K_2\delta^{-\frac{d}{m}}\tag{\(\dagger\)}\]
for some constant \(K_2=K_2(d,K_1)\). Now, for each \(k=0,1,...,m-1\), define \(\delta_k=\frac{\delta}{2\Delta^ke^d}\). We construct a cover of \(B\) as follows. First, to ease the notation, write \(N_k=N(\frac{1}{2}\delta_k,B,\lVert\cdot\rVert_\mathcal{Y})\), and find a set \(\{a^k_j,j=1,...,N_k\}\subset B\) such that \(\mathcal{B}(a^k_j,\frac{1}{2}\delta_k)\) cover \(B\). Then define
\[A^k_1=\mathcal{B}(a^k_1,\frac{1}{2}\delta_k),A^k_2=\mathcal{B}(a^k_2,\frac{1}{2}\delta_k)\backslash\mathcal{B}(a^k_1,\frac{1}{2}\delta_k),...,A^k_{N_k}=\mathcal{B}(a^k_{N_k},\frac{1}{2}\delta_k)\backslash\cup_{j=1}^{N_k-1}\mathcal{B}(a^k_j,\frac{1}{2}\delta_k).\]
Then \(\mathscr{A}_k\vcentcolon=\{A^k_j,j=1,...,N_k\}\) is a cover of \(B\) of cardinality \(N_k\), whose sets \(A^k_j\) have diameter at most \(\delta_k\) and are disjoint. For each \(l=1,...,L\), \(g\in\mathcal{G}^m_B\) and \(p\in\mathbb{N}^d_0\) with \([p]\leq m-1\), define \(A_{l,p}(g)\) as the unique set in \(\mathscr{A}_{[p]}\) such that \(D^pg(x_{(l)})\in A_{l,p}(g)\), and \(a_{l,p}(g)\) as the centre of the ball from which \(A_{l,p}(g)\) was created, so that \(\lVert a_{l,p}(g)-D^pg(x_{(l)})\rVert_\mathcal{Y}\leq\frac{1}{2}\delta_{[p]}\). Then if \(g_1,g_2\in\mathcal{G}^m_B\) are such that \(A_{l,p}(g_1)=A_{l,p}(g_2)\) for all \(l=1,...,L\) and all \(p\in\mathbb{N}^d_0\) with \([p]\leq m-1\), then
\[\lVert D^p(g_1-g_2)(x_{(l)})\rVert_\mathcal{Y}\leq\delta_{[p]},\tag{***}\]
since the diameter of \(A_{l,p}(g_1)=A_{l,p}(g_2)\) is at most \(\delta_{[p]}\). For each \(x\in\mathcal{X}\), take \(x_{(l)}\) such that \(\lVert x-x_{(l)}\rVert\leq\frac{\Delta}{2}\). Then we have, by putting \(p=0\) into (*),
\begin{alignat*}{2}
	&\left\lVert(g_1-g_2)(x)\right\rVert_\mathcal{Y}\\
	&=\left\lVert R_0(g_1,x_{(l)},x-x_{(l)})-R_0(g_2,x_{(l)},x-x_{(l)})+\sum_{[p]\leq m-1}\frac{(x-x_{(l)})^p}{p!}D^p(g_1-g_2)(x_{(l)})\right\rVert_\mathcal{Y}\\
	&\leq2K_1\lVert x-x_{(l)}\rVert^m+\sum_{[p]\leq m-1}\delta_{[p]}\frac{\lVert x-x_{(l)}\rVert^p}{p!}\qquad\text{by (**) with }p=0\text{ and (***)}\\
	&\leq2K_1\Delta^m+\sum^{m-1}_{k=0}\delta_k\Delta^k\left(\sum_{[p]=k}\frac{1}{p!}\right)\leq\frac{\delta}{2}+\left(\max_{k\leq m-1}\delta_k\Delta^k\right)\sum_{k=0}^{m-1}\frac{d^k}{k!}\leq\frac{\delta}{2}+\frac{\delta}{2e^d}e^d=\delta.
\end{alignat*}
It follows that the \(\delta\)-covering number \(N(\delta,\mathcal{G}^m_B,\lVert\cdot\rVert_\infty)\) with respect to the supremum norm is bounded by the number of distinct possibilities for \(\{A_{l,p}(g):l=1,...,L,g\in\mathcal{G}^m_B,p\in\mathbb{N}^d_0,[p]\leq m-1\}\). 

\begin{proof}[Proof of Theorem \ref{Tentropyassouad}]
	Let \(x_{(l)}\) be ordered so that for \(1<l\leq L\), \(\lVert x_{(l')}-x_{(l)}\rVert\leq\Delta\) for some \(l'<l\). Suppose \(g\in\mathcal{G}^m_B\). For each \(l=1,...,L\) and \(p\in\mathbb{N}^d_0\) with \([p]\leq m-1\), we write \(\mathcal{A}_{l,p}(g)\) for the number of possibilities of \(A_{l,p}(g)\), and for each \(l=1,...,L\), we write \(\mathcal{A}_l(g)\) for the number of possibilities of \(A_{l,p}(g)\) as \(p\in\mathbb{N}^d_0\) varies with \([p]\leq m-1\). For \(l=1\), we have \(D^pg(x_{(1)})\in B\)	for each \(p\in\mathbb{N}^d_0\) with \([p]\leq m-1\). So
	\[\mathcal{A}_{1,p}(g)\leq N_{[p]}=N\left(\frac{1}{4e^d}\delta^{\frac{m-[p]}{m}}(4K_1)^{\frac{[p]}{m}},B,\lVert\cdot\rVert_\mathcal{Y}\right)\leq N\left(\frac{\delta}{4e^d},B,\lVert\cdot\rVert_\mathcal{Y}\right),\]
	where the last upper bound follows since  \(N(\cdot,B,\lVert\cdot\rVert_\mathcal{Y})\) is a decreasing function, and we have \(K_1\geq1\) and \(0<\delta<1\). This upper bound has no dependence on \(p\). The number of different \(p\in\mathbb{N}^d_0\) with \([p]\leq m-1\) is equal to \(\binom{m+d-1}{d}\), which is bounded above by \(m^d\), and so \(\mathcal{A}_1(g)\leq N(\frac{\delta}{4e^d},B,\lVert\cdot\rVert_\mathcal{Y})^{m^d}\). Since \(B=B\cap\mathcal{B}(0,K_B)\) is \((M,\tau_\text{asd})\)-homogeneous, \(N(\frac{\delta}{4e^d},B,\lVert\cdot\rVert_\mathcal{Y})\leq M(\frac{4e^dK_B}{\delta})^{\tau_\text{asd}}\),	and so
	\[\mathcal{A}_1(g)\leq M^{m^d}\left(\frac{4e^dK_B}{\delta}\right)^{\tau_\text{asd}m^d}.\tag{\(\dagger\dagger\)}\]
	Now, for \(1<l\leq L\), suppose that \(A_{l',q}(g)\) is given for all \(l'<l\) and all \(q\in\mathbb{N}^d_0\) with \([q]\leq m-1\). Choose \(l'<l\) such that \(\lVert x_{(l')}-x_{(l)}\rVert\leq\Delta\), and write \(y_{l,p}(g)\vcentcolon=\sum_{[q]\leq m-1-[p]}\frac{(x_{(l')}-x_{(l)})^q}{q!}a_{l',p+q}(g)\). Then for any \(p\in\mathbb{N}_0^d\) with \([p]\leq m-1\), (*) tells us that
	\begin{alignat*}{2}
		&\left\lVert D^pg(x_{(l)})-y_{l,p}(g)\right\rVert_\mathcal{Y}\\
		&=\left\lVert R_p(g,x_{(l')},x_{(l)}-x_{(l')})\right\rVert_\mathcal{Y}+\sum_{[q]\leq m-1-[p]}\frac{\lVert x_{(l')}-x_{(l)}\rVert^q}{q!}\left\lVert D^{p+q}g(x_{(l')})-a_{l',p+q}(g)\right\rVert_\mathcal{Y}\\
		&\leq K_1\Delta^{m-[p]}+\sum_{[q]\leq m-1-[p]}\delta_{[p+q]}\frac{\Delta^q}{q!}=K_1\frac{\Delta^m}{\Delta^{[p]}}+\delta_{[p]}\sum_{k=0}^{m-1-[p]}\delta_k\Delta^k\left(\sum_{[q]=k}\frac{1}{q!}\right)\leq\frac{e^d+1}{2}\delta_{[p]}.
	\end{alignat*}
	As \(a_{l',p+q}(g)\) is given for all \([q]\leq m-1-[p]\), \(y_{l,p}(g)\) is a fixed point in \(\mathcal{Y}\). So \(\mathcal{A}_{l,p}(g)\) is bounded by the number of sets in \(\mathscr{A}_{[p]}\) that intersect with \(B_{l,p}(g)\vcentcolon=B\cap\mathcal{B}\left(y_{l,p}(g),\frac{e^d+1}{2}\delta_{[p]}\right)\). Define \(\mathscr{A}_{l,p}(g)\vcentcolon=\{A\in\mathscr{A}_{[p]}:A\cap B_{l,p}(g)=\emptyset\}\) and \(\mathscr{A}'_{l,p}(g)\vcentcolon=\{A\in\mathscr{A}_{[p]}:A\cap B_{l,p}(g)\neq\emptyset\}\), so that \(\mathscr{A}_{[p]}=\mathscr{A}_{l,p}(g)\cup\mathscr{A}'_{l,p}(g)\), \(N_{[p]}=\lvert\mathscr{A}_{[p]}\rvert=\lvert\mathscr{A}_{l,p}(g)\rvert+\lvert\mathscr{A}'_{l,p}(g)\rvert\) and \(\mathcal{A}_{l,p}(g)\leq\lvert\mathscr{A}'_{l,p}(g)\rvert\). Now, write \(B^+_{l,p}(g)\vcentcolon=B\cap\mathcal{B}(y_{l,p}(g),\frac{e^d+3}{2}\delta_{[p]})\). Then we have \(A\subset B^+_{l,p}(g)\) for all \(A\in\mathscr{A}'_{l,p}(g)\). Let \(\mathscr{A}^+_{l,p}(g)\) be a \(\frac{1}{2}\delta_{[p]}\)-cover of \(B^+_{l,p}(g)\) with minimal cardinality \(N(\frac{1}{2}\delta_{[p]},B^+_{l,p}(g),\lVert\cdot\rVert_\mathcal{Y})\). Since \(B\) is \((M,\tau_\text{asd})\)-homogeneous, \(N(\frac{1}{2}\delta_{[p]},B^+_{l,p}(g),\lVert\cdot\rVert_\mathcal{Y})\leq M(e^d+3)^{\tau_\text{asd}}\). By taking the union \(\mathscr{A}^+_{l,p}(g)\) with \(\mathscr{A}_{l,p}(g)\), we have a \(\frac{1}{2}\delta_{[p]}\)-cover of \(B\) with cardinality at most \(\lvert\mathscr{A}_{l,p}(g)\rvert+M(e^d+3)^{\tau_\text{asd}}\). So if \(\lvert\mathscr{A}'_{l,p}(g)\rvert>M(e^d+3)^{\tau_\text{asd}}\), then we have found a \(\frac{1}{2}\delta_{[p]}\)-cover of \(B\) with cardinality strictly less than \(N_{[p]}\), contradicting its minimality. Hence, we must have \(\mathcal{A}_{l,p}(g)\leq\lvert\mathscr{A}'_{l,p}(g)\rvert\leq M(e^d+3)^{\tau_\text{asd}}\). But the latter quantity is a constant that does not depend on \(\delta\) or \(p\). Thus
	\[\mathcal{A}_l(g)\leq\prod_{[p]\leq m-1}\mathcal{A}_{l,p}(g)\leq M^{m^d}\left(e^d+3\right)^{\tau_\text{asd}m^d}.\tag{\(\dagger\dagger\dagger\)}\]
	Putting together (\(\dagger\)), (\(\dagger\dagger\)) and (\(\dagger\dagger\dagger\)), we arrive at
	\[N\left(\delta,\mathcal{G}^m_B,\lVert\cdot\rVert_\infty\right)\leq\prod^L_{l=1}\mathcal{A}_l(g)\leq M^{m^d}\left(\frac{4e^dK_B}{\delta}\right)^{\tau_\text{asd}m^d}M^{m^dK_2\delta^{-\frac{d}{m}}}\left(e^d+3\right)^{\tau_\text{asd}m^dK_2\delta^{-\frac{d}{m}}},\]
	and so
	\begin{alignat*}{2}
		H\left(\delta,\mathcal{G}^m_B,\lVert\cdot\rVert_\infty\right)&\leq\delta^{-\frac{d}{m}}\log\left(M^{m^dK_2}\left(e^d+3\right)^{\tau_\text{asd}m^dK_2}\right)+m^d\log\left(M\left(\frac{4e^dK_B}{\delta}\right)^{\tau_\text{asd}}\right)\\
		&\leq K\delta^{-\frac{d}{m}},
	\end{alignat*}
	where \(K\) is a constant depending on \(M,m,d,K_2,\tau_\text{asd}\) and \(K_B\). With the second term, we bounded \(\log\left(\frac{1}{\delta}\right)\) by a constant times \(\delta^{-\frac{d}{m}}\). 
\end{proof}
\begin{proof}[Proof of Theorem \ref{Tentropybox}]
	Suppose \(g\in\mathcal{G}^m_B\). With notation as in the proof of Theorem \ref{Tentropyassouad},  for each \(l=1,...,L\) and each \(p\in\mathbb{N}^d_0\) with \([p]\leq m-1\), we have
	\[\mathcal{A}_{l,p}(g)\leq N_{[p]}=N\left(\frac{\delta}{2\Delta^{[p]}e^d},B,\lVert\cdot\rVert_\mathcal{Y}\right)\leq N\left(\frac{\delta}{2e^d},B,\lVert\cdot\rVert_\mathcal{Y}\right)\leq\left(\frac{\delta}{2e^d}\right)^{-(\tau_\text{box}+1)},\]
	where the second last upper bound follows since \(N(\cdot,B,\lVert\cdot\rVert_\mathcal{Y})\) is a decreasing function, and we have \(K_1\geq1\) and \(0<\delta<1\), and the last upper bound follows from Equation (box) in Section \ref{SSmaths}. This upper bound has no dependence on \(l\) or \(p\). So for each \(l=1,...,L\), \(\mathcal{A}_l(g)\leq\left(\frac{2e^d}{\delta}\right)^{(\tau_\text{box}+1)m^d}\). Putting this together with (\(\dagger\)), we arrive at
	\[N\left(\delta,\mathcal{G}^m_B,\lVert\cdot\rVert_\infty\right)\leq\prod^L_{l=1}\mathcal{A}_l(g)\leq\left(\frac{2e^d}{\delta}\right)^{(\tau_\text{box}+1)m^dK_2\delta^{-\frac{d}{m}}},\]
	and so
	\[H\left(\delta,\mathcal{G}^m_B,\lVert\cdot\rVert_\infty\right)\leq(\tau_\text{box}+1)m^dK_2\delta^{-\frac{d}{m}}\log\left(\frac{2e^d}{\delta}\right)\leq K\delta^{-\frac{d}{m}}\log\left(\frac{1}{\delta}\right),\]
	where \(K\) is a constant depending on \(m,d,K_2\) and \(\tau_\text{box}\). 
\end{proof}
\begin{proof}[Proof of Theorem \ref{Tentropyexponential}]
	Suppose \(g\in\mathcal{G}^m_B\). With notation as in the proof of Theorem \ref{Tentropyassouad}, for each \(l=1,...,L\) and each \(p\in\mathbb{N}^d_0\) with \([p]\leq m-1\), we have
	\[\mathcal{A}_{l,p}(g)\leq N_{[p]}=N\left(\frac{\delta}{2\Delta^{[p]}e^d},B,\lVert\cdot\rVert_\mathcal{Y}\right)\leq N\left(\frac{\delta}{2e^d},B,\lVert\cdot\rVert_\mathcal{Y}\right)\leq\exp\left\{M\left(\frac{\delta}{2e^d}\right)^{-\tau_\text{exp}}\right\},\]
	where the second last upper bound follows since \(N(\cdot,B,\lVert\cdot\rVert_\mathcal{Y})\) is a decreasing function, and we have \(K_1\geq1\) and \(0<\delta<1\). This upper bound has no dependence on \(l\) or \(p\). So for each \(l=1,...,L\),
	\[\mathcal{A}_l(g)\leq\exp\left\{M\left(\frac{\delta}{2e^d}\right)^{-\tau_\text{exp}}m^d\right\}.\]
	Putting this together with (\(\dagger\)), we arrive at
	\[N\left(\delta,\mathcal{G}^m_B,\lVert\cdot\rVert_\infty\right)\leq\prod^L_{l=1}\mathcal{A}_l(g)\leq\exp\left\{M\left(\frac{\delta}{2e^d}\right)^{-\tau_\text{exp}}m^dK_2\delta^{-\frac{d}{m}}\right\},\]
	and so
	\[H\left(\delta,\mathcal{G}^m_B,\lVert\cdot\rVert_\infty\right)\leq M\left(\frac{1}{2e^d}\right)^{-\tau_\text{exp}}m^dK_2\delta^{-\frac{d}{m}-\tau_\text{exp}}\leq K\delta^{-\left(\frac{d}{m}+\tau_\text{exp}\right)},\]
	where \(K\) is a constant depending on \(m,d,M,K_2\) and \(\tau_\text{exp}\). 
\end{proof}
\section{Applications to Statistical Learning Theory}\label{Sstatisticallearningtheory}
In this Section, we discuss the application of the above main results to statistical learning theory. 

\subsection{Least-Squares Regression with Fixed Design}\label{SSpeeling}
We first consider problem of least squares regression with fixed design, whereby the covariates \(x_1,...,x_n\in\mathcal{X}\) are considered fixed. Let \(Y_1,...,Y_n\) be random variables taking values in \(\mathcal{Y}\) satisfying
\[Y_i=g_0(x_i)+\varepsilon_i,\qquad i=1,...,n,\]
where \(\varepsilon_i\) are independent (Hilbert space) Gaussian noise terms with zero mean and covariance with trace 1 (see Section \ref{SSgaussian} for details), and \(g_0\) is the unknown regression function in a given class \(\mathcal{G}\) of functions \(\mathcal{X}\rightarrow\mathcal{Y}\). We assume that the following least squares estimator exists:
\[\hat{g}_n\vcentcolon=\argmin_{g\in\mathcal{G}}\frac{1}{n}\sum^n_{i=1}\left\lVert Y_i-g(x_i)\right\rVert_\mathcal{Y}^2,\]
and we are interested in the convergence of \(\lVert\hat{g}_n-g_0\rVert_{2,P_n}\) to 0. Theorem \ref{Tleastsquares} in Appendix \ref{SSpeelingappendix}, whose proof is based on the \say{peeling device} \citep{vandegeer2000empirical} and concentration of Gaussian measures in Hilbert spaces as discussed in Section \ref{SconcentrationHilbfull}, tells us that \(\lVert\hat{g}_n-g_0\rVert_{2,P_n}=\mathcal{O}_P(\delta_n)\), with \(\delta_n\) satisfying
\[\sqrt{n}\delta_n^2\geq8\left(J(\delta_n)+4\delta_n\sqrt{1+t}+\delta_n\sqrt{8t/3}\right),\]
where \(J(\delta)\vcentcolon=4\int^\delta_0\sqrt{2H(u,\mathcal{B}_{2,P_n}(\delta),\lVert\cdot\rVert_{2,P_n})}du\) and \(\mathcal{B}_{2,P_n}(\delta)\vcentcolon=\{g\in\mathcal{G}:\left\lVert g\right\rVert_{2,P_n}\leq\delta\}\). 

As an example, let us return to the setting of Example \ref{Esmooth}, where \(\mathcal{Y}=L^2(\mathcal{X}',P';\mathbb{R})\), and \(B\subset\mathcal{Y}\) is a class of \(m'\)-times differentiable functions. We saw that \(H(\delta,\mathcal{G}^m_B,\lVert\cdot\rVert_\infty)\leq K\delta^{-(\frac{d}{m}+\frac{d'}{m'})}\) by Theorem \ref{Tentropyexponential}. Thus, for another constant \(K'>0\), \(J(\delta)\leq K'\delta^{1-\frac{1}{2}(\frac{d}{m}+\frac{d'}{m'})}\), and it can be shown that
\[\left\lVert\hat{g}_n-g_0\right\rVert_{2,P_n}=\mathcal{O}_P(n^{-1/(2+\frac{d}{m}+\frac{d'}{m'})}).\]
For smooth real-valued function classes, the rate is \(n^{-1/(2+\frac{d}{m})}\) \citep[p.40, Theorem 1.6]{tsybakov2008introduction}, so we can see that the terms \(\frac{d}{m}\) and \(\frac{d'}{m'}\) that correspond to the smoothness of \(\mathcal{G}^m_B\) and \(B\) simply add up in the exponent. Note that as \(m\rightarrow\infty\) and \(m'\rightarrow\infty\), we have \(\left\lVert\hat{g}_n-g_0\right\rVert_{2,P_n}=\mathcal{O}_P(n^{-\frac{1}{2}})\). 

\subsection{Empirical Risk Minimisation with Lipschitz Loss with Random Design}\label{SSrandomdesign}
We discuss the random design setting with bounded \(c\)-Lipschitz loss function \(\mathcal{L}:\mathcal{Y}\times\mathcal{Y}\rightarrow\mathbb{R}\). The population and empirical risks for \(g\in\mathcal{G}\) are given by \(\mathcal{R}(g)=\mathbb{E}[\mathcal{L}(Y,g(X))]\) and \(\hat{\mathcal{R}}_n(g)=\frac{1}{n}\sum^n_{i=1}\mathcal{L}(Y_i,g(X_i))\) respectively. We assume that the empirical risk minimiser \(\hat{g}_n=\argmin_{g\in\mathcal{G}}\hat{\mathcal{R}}_n(g)\) exists, and are interested in the convergence of \(\mathcal{R}(\hat{g}_n)-\mathcal{R}(g^*)\) to 0. The following decomposition is well-known, e.g. \citet[p.329, Eqn. (26.10)]{shalev2014understanding}:
\[\mathcal{R}(\hat{g}_n)-\mathcal{R}(g^*)\leq\sup_{g\in\mathcal{G}}\left\{\mathcal{R}(g)-\hat{\mathcal{R}}_n(g)\right\}+\hat{\mathcal{R}}_n(g^*)-\mathcal{R}(g^*).\]
For the last two terms, we can apply Hoeffding's inequality for real random variables (Proposition \ref{Phoeffdingreal}), since \(g^*\in\mathcal{G}\) is fixed. For the supremum, the class
\(\mathcal{L}\circ\mathcal{G}\vcentcolon=\left\{(x,y)\mapsto\mathcal{L}(y,g(x)):g\in\mathcal{G}\right\}\) of \(\mathcal{X}\times\mathcal{Y}\rightarrow\mathbb{R}\) functions satisfies the entropy contraction property \(H(\delta,\mathcal{L}\circ\mathcal{G},\lVert\cdot\rVert_\infty)\leq H(\frac{1}{c}\delta,\mathcal{G},\lVert\cdot\rVert_\infty)\) (Lemma \ref{Llipschitzentropy}), thanks to the Lipschitz property of \(\mathcal{L}\). If the entropy is at most polynomial in \(\delta\), as is the case for all of the useful cases, then we have \(H(\delta,\mathcal{L}\circ\mathcal{G},\lVert\cdot\rVert_{2,P_n})\leq KH(\delta,\mathcal{G},\lVert\cdot\rVert_{2,P_n})\) for some constant \(K\), which in turn is bounded by \(KH(\delta,\mathcal{G},\lVert\cdot\rVert_\infty)\), which has been the main topic of this paper. Thence, we can apply the usual chaining argument for real-valued function classes to bound \(\sup_{g\in\mathcal{G}}\{\mathcal{R}(g)-\hat{\mathcal{R}}_n(g)\}\) in probability by an expression involving \(H(\delta,\mathcal{G},\lVert\cdot\rVert_\infty)\). 

\subsection{Discussion on Rademacher Complexity for Vector-Valued Function Classes}\label{SSrademacher}

As well as metric entropy, another common measure of complexity of function classes is the Rademacher complexity, the empirical version of which, for real-valued function classes \(\mathcal{F}\), is \(\hat{\mathfrak{R}}_n(\mathcal{F})=\mathbb{E}[\sup_{f\in\mathcal{F}}\lvert\frac{1}{n}\sum^n_{i=1}\sigma_if(X_i)\rvert\mid X_1,...,X_n]\), where \(\sigma_i\) are independent Rademacher variables \citep[Definition 2]{bartlett2002rademacher}. In this Section, we briefly discuss Rademacher complexities for vector-valued function classes, and due to space constraints, defer a fuller discussion to Appendix \ref{SSrademacherappendix}. Define the \say{Rademacher complexity} of a class \(\mathcal{G}\) of vector-valued functions as
\[\hat{\mathfrak{R}}_n(\mathcal{G})=\mathbb{E}\Bigg[\sup_{g\in\mathcal{G}}\Big\lVert\frac{1}{n}\sum^n_{i=1}\sigma_ig(X_i)\Big\rVert_\mathcal{Y}\mid X_1,...,X_n\Bigg].\]
Indeed, in Section \ref{SSsymmetrisation}, we use the symmetrised empirical measure \(\frac{1}{n}\sum^n_{i=1}\sigma_i\delta_{X_i}\), which suggests the use of the above definition. However, there is a critical issue with this definition. Rademacher complexities are almost always used in conjunction with a loss function, i.e. what we end up using is the Rademacher complexity of the class \(\mathcal{L}\circ\mathcal{G}\) (c.f. Section \ref{SSrandomdesign}). With real-valued function classes \(\mathcal{F}\), \citet[p.112, Theorem 4.12]{ledoux1991probability} shows that for bounded Lipschitz losses \(\mathcal{L}\), we have \(\hat{\mathfrak{R}}_n(\mathcal{L}\circ\mathcal{F})\leq K\hat{\mathfrak{R}}_n(\mathcal{F})\) for a constant \(K\), so it is meaningful to work with \(\hat{\mathfrak{R}}_n(\mathcal{F})\). However, the proof makes use of the fact that the output space is \(\mathbb{R}\), and \citet[Section 6]{maurer2016vector} shows via a counterexample that contraction no longer holds for the above definition of \(\hat{\mathfrak{R}}_n(\mathcal{G})\). \citet{maurer2016vector} in fact shows a contraction result for what we call the \textit{coordinate-wise Rademacher complexity}:
\[\hat{\mathfrak{R}}_n^\text{coord}(\mathcal{G})=\mathbb{E}\Bigg[\sup_{g\in\mathcal{G}}\sum^n_{i=1}\sum_k\sigma_i^kg_k(X_i)\mid X_1,...,X_n\Bigg],\]
where a particular basis of \(\mathcal{Y}\) is fixed, \(k\) is the index on the coordinates of \(\mathcal{Y}\) with respect to this basis and \(g_k\) are real-valued functions that map to each coordinate of \(g\). Notice that in this case, we need a separate Rademacher variable for each coordinate, as well as for each sample. This has been used for finite-dimensional multi-task learning \citep{yousefi2018local,li2019learning}. While we recognise its usefulness, the coordinate-wise Rademacher complexity, by definition, relies on a choice of basis of \(\mathcal{Y}\), and we show in Appendix \ref{SSrademacherappendix} that \(\hat{\mathfrak{R}}^\text{coord}_n\) is actually not independent of the choice of basis. We regard this as a critical issue in using \(\hat{\mathfrak{R}}^\text{coord}_n\) as a \say{complexity measure of a function class}, since it is intuitively clear that the complexity should not depend on the choice of basis of the output space. 

A common way to bound the Rademacher complexity is to use Dudley's chaining and uniform entropy condition, in precisely the same manner as in Section \ref{SSchaining}. In this case, we propose a workaround that avoids using either \(\hat{\mathfrak{R}}_n(\mathcal{G})\) or \(\hat{\mathfrak{R}}^\text{coord}_n(\mathcal{G})\). For a bounded Lipschitz loss function \(\mathcal{L}\), \(\hat{\mathfrak{R}}_n(\mathcal{L}\circ\mathcal{G})\) can be bounded by an expression involving the integral (with respect to \(\delta\)) of the entropy \(H(\delta,\mathcal{L}\circ\mathcal{G},\lVert\cdot\rVert_{2,P_n})\) (this is a standard result; see, for example, \citet[p.338, Lemma 27.4]{shalev2014understanding}; we show a vector-valued analogue for \(\hat{\mathfrak{R}}_n(\mathcal{G})\) in Theorem \ref{Trademacherboundentropy}, using vector-valued Hoeffding-type inequality). But as discussed in Section \ref{SSrandomdesign}, the entropies satisfy a simple contraction property given in Lemma \ref{Llipschitzentropy}. So applying the same argument, we can bound \(\hat{\mathfrak{R}}_n(\mathcal{L}\circ\mathcal{G})\) by an expression involving \(H(\delta,\mathcal{G},\lVert\cdot\rVert_\infty)\), which has been the main topic of this paper. This does not contradict the counterexample of \citet[Section 6]{maurer2016vector}, since the latter is the space of linear operators between infinite-dimensional Hilbert spaces, and hence has infinite entropy. 

\section{Discussion \& Future Directions}\label{Sdiscussion}

To summarize, we took some first steps towards establishing a theory of empirical processes for vector-valued functions. In particular, we investigated the metric entropy of smooth functions, by restricting the partial derivatives to take values in totally bounded subsets with specific properties, leveraging theory from fractal geometry, and demonstrated its application in empirical risk minimisation. 

There is a plethora of possible future research directions. Considering other classes of functions than those of smooth functions is a natural next step. Also, we let \(\mathcal{Y}\) be a Hilbert space, primarily because some simplifications occur for Hoeffding's inequality and Gaussian measures (Appendix \ref{SconcentrationHilbfull}), but extensions to Banach spaces should be possible. Moreover, we used compact subsets of \(\mathbb{R}^d\) as our input space due to the ease in considering partial derivatives, but interesting applications exist for which the input space \(\mathcal{X}\) is a subset of an infinite-dimensional space \citep{li2020neural,nelsen2021random,lu2021learning}. On the more theoretical side, measurability questions for empirical processes and uniform central limit theorems in vector spaces are interesting questions. With empirical risk minimisation, extensions to more general noise with vector-valued Bernstein's inequality or misspecified models are important. 

\section*{Acknowledgments}
We thank Shashank Singh and SImon Buchholz at the Max Planck Institute for Intelligent Systems, T\"ubingen for their helpful comments to the initial drafts. 

This work was inspired by the lecture course \say{Empirical Process Theory and Applications} given by Sara van de Geer at Seminar f\"ur Statistik, ETH Z\"urich. JP is extremely grateful to Sara van de Geer for the lectures, as well as readily engaging in post-lecture discussions. 

\bibliography{ref}
\bibliographystyle{unsrtnat}
\vspace{.7cm}

\newpage
\appendix

The appendix is structured as follows. In empirical process theory, concentration inequalities are an essential tool, and in Appendix \ref{SconcentrationHilbfull}, we state and prove concentration inequalities in Hilbert spaces, in particular, the extensions of Hoeffding's inequality and Gaussian measures to Hilbert spaces. In Appendix \ref{Sdiffcalcfull}, we develop the theory of differential calculus between Banach spaces, which plays a vital role in our paper in considering smooth functions. In Appendix \ref{Sempiricalprocessesfull}, we develop the theory of empirical process theory for vector-valued functions, briefly introduced in Section \ref{Sempiricalprocesstheory}, in more detail. In particular, we develop the symmetrisation technique (Appendix \ref{SSsymmetrisation}); we establish the uniform law of large numbers for vector-valued function classes with bounded entropy (Appendix \ref{SSulln}); we develop the chaining technique for vector-valued functions and use it to establish asymptotic equicontinuity of empirical processes satisfying uniform entropy condition (Appendix \ref{SSchaining}); we use the chaining technique in tandem with the peeling device for least-squares regression, as briefly discussed in Section \ref{SSpeeling} (Appendix \ref{SSpeelingappendix}); and finally, we discuss in full connections with the popular Rademacher complexity, as briefly touched upon in Section \ref{SSrademacher} (Appendix \ref{SSrademacherappendix}).

\section{Concentration Inequalities in Hilbert Spaces}\label{SconcentrationHilbfull}
First, we state Markov's inequality, on which all subsequent results are based. 
\begin{proposition}[Markov's inequality]\label{Pmarkov}
	For any non-negative real random variable \(Z\) and \(a>0\), \(\mathbb{P}\left(Z\geq a\right)\leq\frac{\mathbb{E}\left[Z\right]}{a}\). 
\end{proposition}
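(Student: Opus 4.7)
The plan is to give the textbook one-line proof via the indicator function bound. Since $Z$ is non-negative and $a > 0$, for every outcome $\omega$ in the underlying probability space, we have the pointwise inequality $a \cdot \mathbf{1}_{\{Z \geq a\}}(\omega) \leq Z(\omega)$: on the event $\{Z \geq a\}$ the left side equals $a$ while the right side is at least $a$, and on the complement the left side is $0$ while the right side is non-negative by hypothesis.

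Next, I would take expectations of both sides. Linearity and monotonicity of expectation yield $a \, \mathbb{P}(Z \geq a) = \mathbb{E}[a \cdot \mathbf{1}_{\{Z \geq a\}}] \leq \mathbb{E}[Z]$. Dividing both sides by $a > 0$ gives the desired conclusion $\mathbb{P}(Z \geq a) \leq \mathbb{E}[Z]/a$.

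There is essentially no obstacle here; the only subtlety worth noting is the non-negativity assumption on $Z$, which is used precisely to ensure the indicator bound holds on the complementary event. If $\mathbb{E}[Z] = \infty$ the statement is trivial, and if $\mathbb{E}[Z] < \infty$ all quantities are finite and the manipulation above is rigorous. No measurability concerns arise since $\{Z \geq a\}$ is measurable by measurability of $Z$.
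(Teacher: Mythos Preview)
Your proof is correct and essentially identical to the paper's: both rely on the pointwise inequality $a\,\mathbf{1}_{\{Z\geq a\}}\leq Z$ (the paper obtains it after splitting $Z = Z\mathbf{1}_{\{Z\geq a\}} + Z\mathbf{1}_{\{Z<a\}}$ and dropping the non-negative second term) and then take expectations. There is nothing to add.
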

\begin{proof}
	See that \(\mathbb{E}\left[Z\right]=\mathbb{E}\left[Z\mathbf{1}_{Z\geq a}+Z\mathbf{1}_{Z<a}\right]\geq\mathbb{E}\left[Z\mathbf{1}_{Z\geq a}\right]\geq a\mathbb{E}\left[\mathbf{1}_{Z\geq a}\right]=a\mathbb{P}\left(Z\geq a\right)\). 
\end{proof}
\subsection{Hoeffding's inequality}\label{SShoeffding}
Hoeffding's inequality is a concentration result for sums of bounded random variables. We first state the real version, due to \citet{hoeffding1963probability}. 
\begin{proposition}[Hoeffding's inequality]\label{Phoeffdingreal}
	Let \(Z_1,...,Z_n\) be independent real random variables such that for all \(i\), \(\mathbb{E}[Z_i]=0\) and \(\lvert Z_i\rvert\leq c_i\) almost surely for some constants \(c_i>0\). Then writing \(S_n=\sum^n_{i=1}Z_i\) and \(b^2=\sum^n_{i=1}c_i^2\),
	\[\mathbb{P}\left(S_n\geq a\right)\leq e^{-\frac{a^2}{2b^2}}\]
	for any \(a>0\), or reformulated, for any \(t>0\),
	\[\mathbb{P}\left(S_n\geq b\sqrt{2t}\right)\leq e^{-t}.\]
\end{proposition}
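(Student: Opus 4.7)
The plan is to use the classical Chernoff-bound approach, which reduces the tail bound to controlling the moment generating function (MGF) of each $Z_i$ and then optimising a free parameter. This is a well-known route to Hoeffding's inequality, so there should be no real surprises, but the key technical step is a sharp bound on the MGF of a bounded, centred random variable.

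First I would apply Markov's inequality (Proposition~\ref{Pmarkov}) to the non-negative random variable $e^{\lambda S_n}$ for an arbitrary $\lambda>0$, obtaining
\[\mathbb{P}(S_n\geq a)=\mathbb{P}(e^{\lambda S_n}\geq e^{\lambda a})\leq e^{-\lambda a}\,\mathbb{E}[e^{\lambda S_n}].\]
Independence of the $Z_i$ then factorises the MGF as $\mathbb{E}[e^{\lambda S_n}]=\prod_{i=1}^n\mathbb{E}[e^{\lambda Z_i}]$, reducing the problem to bounding each factor individually.

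The main step, and the only non-trivial one, is \textbf{Hoeffding's lemma}: for a centred real random variable $Z$ with $|Z|\leq c$, one has $\mathbb{E}[e^{\lambda Z}]\leq e^{\lambda^2 c^2/2}$. I would prove this by convexity of $t\mapsto e^{\lambda t}$: writing any $z\in[-c,c]$ as the convex combination $z=\frac{c-z}{2c}(-c)+\frac{c+z}{2c}c$, convexity gives $e^{\lambda z}\leq\frac{c-z}{2c}e^{-\lambda c}+\frac{c+z}{2c}e^{\lambda c}$. Taking expectations and using $\mathbb{E}[Z]=0$ yields $\mathbb{E}[e^{\lambda Z}]\leq\cosh(\lambda c)$. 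To upgrade $\cosh(\lambda c)$ to $e^{\lambda^2 c^2/2}$, I would compare the Taylor series termwise: $\cosh(x)=\sum_{k\geq0}\frac{x^{2k}}{(2k)!}$ while $e^{x^2/2}=\sum_{k\geq0}\frac{x^{2k}}{2^k k!}$, and $(2k)!\geq 2^k k!$ for all $k\geq0$. (An alternative is to verify that $\psi(\lambda):=\log\mathbb{E}[e^{\lambda Z}]$ satisfies $\psi(0)=\psi'(0)=0$ and $\psi''(\lambda)\leq c^2$ by bounding the variance of the tilted distribution.) The expected obstacle is purely in getting this termwise comparison clean; everything else is bookkeeping.

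Combining the pieces gives $\mathbb{P}(S_n\geq a)\leq e^{-\lambda a}\prod_{i=1}^n e^{\lambda^2 c_i^2/2}=e^{-\lambda a+\lambda^2 b^2/2}$ with $b^2=\sum_{i=1}^n c_i^2$. Finally I would optimise over $\lambda>0$: the quadratic $-\lambda a+\lambda^2 b^2/2$ is minimised at $\lambda^\star=a/b^2$, yielding the value $-a^2/(2b^2)$ and hence
\[\mathbb{P}(S_n\geq a)\leq e^{-a^2/(2b^2)}.\]
The reformulation in terms of $t$ follows by substituting $a=b\sqrt{2t}$.
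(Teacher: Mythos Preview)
Your proof is correct and follows essentially the same approach as the paper: the convexity argument to bound $\mathbb{E}[e^{\lambda Z_i}]$ by $\cosh(\lambda c_i)$, the termwise Taylor-series comparison $(2k)!\geq 2^k k!$ to get $\cosh(x)\leq e^{x^2/2}$, Markov's inequality, and the optimisation $\lambda=a/b^2$ are all exactly as in the paper (only the order of presentation differs slightly).
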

\begin{proof}
	Let \(\lambda>0\), and define, for each \(i\), \(A_i=\frac{c_i-Z_i}{2c_i}\). Then almost surely, \(0\leq A_i\leq1\), \(Z_i=A_i(-c_i)+(1-A_i)c_i\), and by the convexity of the function \(u\mapsto e^{\lambda u}\), we have \(e^{\lambda Z_i}\leq A_ie^{-\lambda c_i}+(1-A_i)e^{\lambda c_i}\). But then \(\mathbb{E}[A_i]=\frac{1}{2}\), so
	\[\mathbb{E}\left[e^{\lambda Z_i}\right]\leq\frac{1}{2}e^{-\lambda c_i}+\frac{1}{2}e^{\lambda c_i}=\sum^\infty_{k=0}\frac{\left(\lambda c_i\right)^{2k}}{(2k)!}\leq\sum^\infty_{k=0}\frac{\left(\lambda c_i\right)^{2k}}{2^kk!}=e^{\frac{1}{2}\left(\lambda c_i\right)^2}.\]
	Now see that, by Markov's inequality,
	\[\mathbb{P}\left(S_n\geq a\right)=\mathbb{P}\left(e^{\lambda S_n}\geq e^{\lambda a}\right)\leq e^{-\lambda a}\mathbb{E}\left[e^{\lambda S_n}\right]\leq e^{-\lambda a}e^{\frac{\lambda^2b^2}{2}}.\]
	Now let \(\lambda=\frac{a}{b^2}\) for the first inequality, and then take \(a=b\sqrt{2t}\) for the reformulation. 
\end{proof}
In \citet{pinelis1992approach}, Hoeffding's inequality was extended to martingales in Banach spaces with certain smoothness properties (see also \citet[Eqn. (3)]{rosasco2010learning} and \citet[p.217, Corollary 6.15]{steinwart2008support}). As we only require the result for sums of independent random variables taking values in a separable Hilbert space \(\mathcal{Y}\), we give the corresponding simplified statement and proof. First, we state its expectation form, then use it to prove the probability inequality. 
\begin{proposition}\label{Phoeffdinghilbertexpectation}
	Let \(Y_1,...,Y_n\) be independent random variables in \(\mathcal{Y}\), such that for all \(i\), \(\mathbb{E}[Y_i]=0\) and \(\lVert Y_i\rVert_\mathcal{Y}\leq c_i\) almost surely for some constants \(c_i>0\). Then writing \(S_n=\sum^n_{i=1}Y_i\), for any \(\lambda>0\), we have
	\[\mathbb{E}\left[\cosh\left(\lambda\left\lVert S_n\right\rVert_\mathcal{Y}\right)\right]\leq\prod^n_{i=1}e^{\lambda^2c^2_i}.\]
\end{proposition}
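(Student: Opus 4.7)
I would prove the bound by induction on $n$, isolating the inductive step as a one-step lemma: for any fixed $x\in\mathcal{Y}$ and any $\mathcal{Y}$-valued random variable $Y$ with $\mathbb{E}[Y]=0$ and $\lVert Y\rVert_\mathcal{Y}\leq c$ almost surely,
\[\mathbb{E}\!\left[\cosh\!\left(\lambda\lVert x+Y\rVert_\mathcal{Y}\right)\right]\leq\cosh(\lambda\lVert x\rVert_\mathcal{Y})\cdot e^{\lambda^2c^2}.\]
Given this lemma, the base case $n=0$ is trivial ($S_0=0$, empty product), while the inductive step follows by conditioning on $\mathcal{F}_{n-1}=\sigma(Y_1,\ldots,Y_{n-1})$, applying the lemma to $x=S_{n-1}$ (which is $\mathcal{F}_{n-1}$-measurable) and $Y=Y_n$ (independent of $\mathcal{F}_{n-1}$, mean zero, bounded by $c_n$), taking outer expectations, and invoking the inductive hypothesis; the factors $e^{\lambda^2c_i^2}$ appear one by one through telescoping.

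\textbf{Proof of the one-step lemma.} The main idea is to reduce from $\mathcal{Y}$ to $\mathbb{R}$ via orthogonal decomposition. Assuming $x\neq0$ (if $x=0$ the claim is immediate from $\mathbb{E}[\cosh(\lambda\lVert Y\rVert_\mathcal{Y})]\leq\cosh(\lambda c)\leq e^{\lambda^2c^2/2}\leq e^{\lambda^2c^2}$), set $\hat x=x/\lVert x\rVert_\mathcal{Y}$, $Y_\parallel=\langle Y,\hat x\rangle_\mathcal{Y}\in\mathbb{R}$ and $Y_\perp=Y-Y_\parallel\hat x$, so that $Y_\perp\perp\hat x$. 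Pythagoras then gives $\lVert x+Y\rVert_\mathcal{Y}^2=(\lVert x\rVert_\mathcal{Y}+Y_\parallel)^2+\lVert Y_\perp\rVert_\mathcal{Y}^2$. I would next establish the purely scalar inequality
\[\cosh\!\left(\lambda\sqrt{a^2+b^2}\right)\leq\cosh(\lambda a)\cosh(\lambda b)\qquad(a,b\in\mathbb{R}),\]
which follows because $t\mapsto\cosh(\lambda\sqrt t)=\sum_{k\geq0}\lambda^{2k}t^k/(2k)!$ has nonnegative coefficients and is therefore convex on $[0,\infty)$: applying convexity to the midpoint of $(a+b)^2$ and $(a-b)^2$, whose average is $a^2+b^2$, and then invoking the identity $2\cosh u\cosh v=\cosh(u+v)+\cosh(u-v)$, yields the inequality. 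Substituting $a=\lVert x\rVert_\mathcal{Y}+Y_\parallel$ and $b=\lVert Y_\perp\rVert_\mathcal{Y}$ produces the pointwise bound $\cosh(\lambda\lVert x+Y\rVert_\mathcal{Y})\leq\cosh(\lambda(\lVert x\rVert_\mathcal{Y}+Y_\parallel))\,\cosh(\lambda\lVert Y_\perp\rVert_\mathcal{Y})$.

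\textbf{Assembling the two factors.} Since $\lVert Y_\perp\rVert_\mathcal{Y}\leq\lVert Y\rVert_\mathcal{Y}\leq c$, we have $\cosh(\lambda\lVert Y_\perp\rVert_\mathcal{Y})\leq\cosh(\lambda c)\leq e^{\lambda^2c^2/2}$ almost surely, using the bound $\cosh u\leq e^{u^2/2}$ already derived inside the proof of Proposition~\ref{Phoeffdingreal}. For the parallel factor, $Y_\parallel\in\mathbb{R}$ satisfies $\mathbb{E}[Y_\parallel]=\langle\mathbb{E}[Y],\hat x\rangle_\mathcal{Y}=0$ and $|Y_\parallel|\leq\lVert Y\rVert_\mathcal{Y}\leq c$ by Cauchy--Schwarz, so the scalar Hoeffding bound $\mathbb{E}[e^{\pm\lambda Y_\parallel}]\leq e^{\lambda^2c^2/2}$ (from the same proof) gives
\[\mathbb{E}\!\left[\cosh(\lambda(\lVert x\rVert_\mathcal{Y}+Y_\parallel))\right]=\tfrac{1}{2}e^{\lambda\lVert x\rVert_\mathcal{Y}}\mathbb{E}[e^{\lambda Y_\parallel}]+\tfrac{1}{2}e^{-\lambda\lVert x\rVert_\mathcal{Y}}\mathbb{E}[e^{-\lambda Y_\parallel}]\leq\cosh(\lambda\lVert x\rVert_\mathcal{Y})\,e^{\lambda^2c^2/2}.\]
Multiplying the two factor bounds yields the one-step lemma with prefactor $e^{\lambda^2c^2}$, and the induction then closes.

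\textbf{Where the work lies.} The main subtlety is that in infinite dimensions there is no moment-generating function for the vector $x+Y$ that one can bound directly. The Hilbert-space feature that rescues us is Pythagoras: it splits $\lVert x+Y\rVert_\mathcal{Y}^2$ into a one-dimensional direction aligned with $x$ (in which $Y_\parallel$ inherits both the zero-mean and the boundedness of $Y$, so that the classical scalar Hoeffding bound applies) and an orthogonal complement (where only crude boundedness is needed). The algebraic bridge between the two pieces is the scalar inequality $\cosh(\lambda\sqrt{a^2+b^2})\leq\cosh(\lambda a)\cosh(\lambda b)$, itself a consequence of the hidden convexity of $t\mapsto\cosh(\lambda\sqrt t)$; this is where the proof would fail to port to a general Banach space without an extra smoothness hypothesis.
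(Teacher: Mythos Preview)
Your argument is correct, and it takes a genuinely different route from the paper's. The paper (following Pinelis) proves the same one-step lemma by a differential-inequality method: it sets $F_\lambda(t)=\mathbb{E}[\cosh(\lambda\lVert S_{n-1}+tY_n\rVert_\mathcal{Y})\mid\mathcal{F}_{n-1}]$, computes $F_\lambda'(0)=0$ from $\mathbb{E}[Y_n\mid\mathcal{F}_{n-1}]=0$, bounds $F_\lambda''(t)\leq 2\lambda^2c_n^2\,F_\lambda(t)$ using the Fr\'echet second derivative of the norm and Cauchy--Schwarz, and then compares with the solution of the corresponding ODE to conclude $F_\lambda(1)\leq F_\lambda(0)\,e^{\lambda^2c_n^2}$. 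Your approach instead exploits the Hilbert-space Pythagoras identity directly, splitting $Y$ along $\hat x$ and applying the convexity inequality $\cosh(\lambda\sqrt{a^2+b^2})\leq\cosh(\lambda a)\cosh(\lambda b)$ together with the scalar Hoeffding MGF bound.

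Both routes land on the identical constant $e^{\lambda^2c^2}$ in the inductive step; your loss of a factor $2$ in the exponent (from bounding $Y_\parallel$ and $Y_\perp$ separately by $c$) mirrors the paper's factor $2$ in the coefficient of $F_\lambda''$. Your argument is more elementary---no differential calculus on Hilbert spaces, no ODE comparison---but it is specific to the Hilbert setting, since it hinges on Pythagoras. The Pinelis route, by contrast, generalises to $(2,D)$-smooth Banach spaces (where one still has a second-order bound on the norm but no orthogonal decomposition), which is precisely the trade-off you anticipate in your closing remarks. One small point worth making explicit in a write-up: when you ``multiply the two factor bounds,'' the step goes through because the bound on $\cosh(\lambda\lVert Y_\perp\rVert_\mathcal{Y})$ is an almost-sure bound by a deterministic constant, so it can be pulled outside the expectation before applying the MGF bound to the parallel factor.
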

\begin{proof}
	Denote by \(\mathcal{F}_0=\{\emptyset,\Omega\}\) the trivial \(\sigma\)-algebra, and for each \(i=1,...,n\), let \(\mathcal{F}_i=\sigma(Y_1,...,Y_i)\), the \(\sigma\)-algebra generated by \(Y_1,...,Y_i\). For \(\lambda>0\), consider the real-valued stochastic process \(F_\lambda(t)\) indexed by \(t\in\mathbb{R}\) given by
	\[F_\lambda(t)=\mathbb{E}\left[\cosh\left(\lambda\left\lVert S_{n-1}+tY_n\right\rVert_\mathcal{Y}\right)\mid\mathcal{F}_{n-1}\right].\]
	If we define maps \(H:\mathbb{R}\rightarrow\mathcal{Y}\) and \(J:\mathcal{Y}\rightarrow\mathbb{R}\) by \(H(t)=tY_n\) and \(J(y)=\lambda\left\lVert S_{n-1}+y\right\rVert_\mathcal{Y}\) respectively, the derivative of \(F_\lambda\) with respect to \(t\) can be calculated from the chain rule as
	\[F'_\lambda(t)=\mathbb{E}\left[(J\circ H)'(t)\sinh\left(\lambda\left\lVert S_{n-1}+tY_n\right\rVert_\mathcal{Y}\right)\mid\mathcal{F}_{n-1}\right].\]
	Now, \citet[p.100, Example 7.3]{precup2002methods} tells us that \((J\circ H)'(t)=(H^*\circ J'\circ H)(t)\). We can easily compute the adjoint \(H^*(y)=\langle y,Y_n\rangle_\mathcal{Y}\) and the Fr\'echet derivative \(J'(y)=\frac{\lambda S_{n-1}+\lambda y}{\left\lVert S_{n-1}+y\right\rVert_\mathcal{Y}}\), so we have 
	\[F'_\lambda(t)=\mathbb{E}\left[\left\langle Y_n,\frac{\lambda S_{n-1}+\lambda tY_n}{\left\lVert S_{n-1}+tY_n\right\rVert_\mathcal{Y}}\right\rangle_\mathcal{Y}\sinh\left(\lambda\left\lVert S_{n-1}+tY_n\right\rVert_\mathcal{Y}\right)\mid\mathcal{F}_{n-1}\right].\]
	Then see that \(F'_\lambda(0)=0\):
	\begin{alignat*}{2}
		F'_\lambda(0)=\sinh\left(\lambda\left\lVert S_{n-1}\right\rVert_\mathcal{Y}\right)\left\langle\mathbb{E}\left[Y_n\mid\mathcal{F}_{n-1}\right],\frac{\lambda S_{n-1}}{\left\lVert S_{n-1}\right\rVert_\mathcal{Y}}\right\rangle_\mathcal{Y}=0,
	\end{alignat*}
	since, by independence and zero-mean assumptions, \(\mathbb{E}[Y_n\mid\mathcal{F}_{n-1}]=0\). We want to compute the second derivative with respect to \(t\), and in order to do so, we need the derivative of the first factor in the expectation of \(F'_\lambda\). Define a map \(K:\mathcal{Y}\rightarrow\mathbb{R}\) by \(K(y)=\langle Y_n,S_{n-1}+y\rangle_\mathcal{Y}\). Then the Fr\'echet derivative of \(K\) can easily be seen to be \(K'(y)=Y_n\). Then using the quotient rule,
	\begin{alignat*}{2}
		\frac{d}{dt}\left\langle Y_n,\frac{\lambda S_{n-1}+\lambda tY_n}{\left\lVert S_{n-1}+tY_n\right\rVert_\mathcal{Y}}\right\rangle_\mathcal{Y}&=\frac{\lambda\left\lVert Y_n\right\rVert^2_\mathcal{Y}}{\left\lVert S_{n-1}+tY_n\right\rVert_\mathcal{Y}}-\frac{\left\langle Y_n,S_{n-1}+tY_n\right\rangle_\mathcal{Y}}{\left\lVert S_{n-1}+tY_n\right\rVert_\mathcal{Y}^2}\left\langle Y_n,\frac{\lambda S_{n-1}+\lambda tY_n}{\left\lVert S_{n-1}+tY_n\right\rVert_\mathcal{Y}}\right\rangle_\mathcal{Y}\\
		&\leq\frac{\lambda\left\lVert Y_n\right\rVert^2_\mathcal{Y}}{\left\lVert S_{n-1}+tY_n\right\rVert_\mathcal{Y}}. 
	\end{alignat*}
	Then see that, using the elementary inequality \(\sinh u\leq u\cosh u\),
	\begin{alignat*}{2}
		F''_\lambda(t)&\leq\mathbb{E}\left[\cosh\left(\lambda\left\lVert S_{n-1}+tY_n\right\rVert_\mathcal{Y}\right)\left(\left\langle Y_n,\frac{\lambda S_{n-1}+\lambda tY_n}{\left\lVert S_{n-1}+tY_n\right\rVert_\mathcal{Y}}\right\rangle_\mathcal{Y}^2+\lambda^2\left\lVert Y_n\right\rVert^2_\mathcal{Y}\right)\mid\mathcal{F}_{n-1}\right]\\
		&\leq\mathbb{E}\left[\cosh\left(\lambda\left\lVert S_{n-1}+tY_n\right\rVert_\mathcal{Y}\right)\left(2\lambda^2\left\lVert Y_n\right\rVert^2_\mathcal{Y}\right)\mid\mathcal{F}_{n-1}\right]\enspace\text{by the Cauchy-Schwarz inequality}\\
		&\leq2\lambda^2c_n^2\mathbb{E}\left[\cosh\left(\lambda\left\lVert S_{n-1}+tY_n\right\rVert_\mathcal{Y}\right)\mid\mathcal{F}_{n-1}\right]\qquad\text{by the almost sure bound on }\left\lVert Y_n\right\rVert_\mathcal{Y}\\
		&=2\lambda^2c_n^2F_\lambda(t).
	\end{alignat*}
	Define \(G_\lambda(t)=\frac{1}{2\lambda^2c^2_n}F''_\lambda(t)-F_\lambda(t)\). Then by the preceding argument, \(G_\lambda(t)\leq0\) for all \(t\in\mathbb{R}\). But consider the differential equation
	\[F''_\lambda(t)=2\lambda^2c^2_n\left(F_\lambda(t)+G_\lambda(t)\right),\qquad F'_\lambda(0)=0.\tag{*}\]
	We claim that \(F(t)=F_\lambda(0)\cosh\left(\sqrt{2}\lambda c_nt\right)+\int^{\sqrt{2}\lambda c_nt}_0G_\lambda\left(\frac{s}{\sqrt{2}\lambda c_n}\right)\sinh\left(\sqrt{2}\lambda c_nt-s\right)ds\) solves the differential equation (*). Indeed, we clearly have \(F(0)=F_\lambda(0)\); further, we have
	\[F'(t)=\sqrt{2}\lambda c_nF_\lambda(0)\sinh\left(\sqrt{2}\lambda c_nt\right)+\sqrt{2}\lambda c_n\int^{\sqrt{2}\lambda c_nt}_0G_\lambda\left(\frac{s}{\sqrt{2}\lambda c_n}\right)\cosh\left(\sqrt{2}\lambda c_nt-s\right)ds\]
	which clearly satisfies \(F'(0)=0\); and finally, 
	\begin{alignat*}{2}
		F''(t)&=2\lambda^2c_n^2F_\lambda(0)\cosh\left(\sqrt{2}\lambda c_nt\right)\\
		&\qquad+2\lambda^2c_n^2\int^{\sqrt{2}\lambda c_nt}_0G_\lambda\left(\frac{s}{\sqrt{2}\lambda c_n}\right)\sinh\left(\sqrt{2}\lambda c_nt-s\right)ds+2\lambda^2c_n^2G_\lambda(t)\\
		&=2\lambda^2c^2_n\left(F(t)+G_\lambda(t)\right),
	\end{alignat*}
	Hence this \(F\) is the solution to (*), and hence we have
	\begin{alignat*}{2}
		F_\lambda(1)&=F_\lambda(0)\cosh\left(\sqrt{2}\lambda c_n\right)+\int^{\sqrt{2}\lambda c_n}_0G_\lambda\left(\frac{s}{\sqrt{2}\lambda c_n}\right)\sinh\left(\sqrt{2}\lambda c_n-s\right)ds\\
		&\leq F_\lambda(0)\cosh\left(\sqrt{2}\lambda c_n\right)\qquad\text{since }G_\lambda\leq0\\
		&\leq F_\lambda(0)e^{\lambda^2c_n^2}\qquad\text{by }\cosh u\leq e^{\frac{1}{2}u^2}.
	\end{alignat*}
	Now see that
	\begin{alignat*}{3}
		\mathbb{E}\left[\cosh\left(\lambda\left\lVert S_n\right\rVert_\mathcal{Y}\right)\right]&=\mathbb{E}\left[F_\lambda(1)\right]&&\text{by the law of iterated expectations}\\
		&\leq e^{\lambda^2c_n^2}\mathbb{E}\left[\cosh\left(\lambda\left\lVert S_{n-1}\right\rVert_\mathcal{Y}\right)\right]&&\text{by above}\\
		&\leq\prod^n_{i=1}e^{\lambda^2c_i^2}&&\text{by the same argument on }i=1,...,n-1.
	\end{alignat*}
\end{proof}
\begin{proposition}[Hoeffding's inequality in Hilbert spaces]\label{Phoeffdinghilbert}
	Let \(Y_1,...,Y_n\) be independent random variables in \(\mathcal{Y}\), such that for all \(i\), \(\mathbb{E}[Y_i]=0\) and \(\lVert Y_i\rVert_\mathcal{Y}\leq c_i\) almost surely for some constants \(c_i>0\). Then writing \(S_n=\sum^n_{i=1}Y_i\) and \(b^2=\sum^n_{i=1}c_i^2\),
	\[\mathbb{P}\left(\left\lVert S_n\right\rVert_\mathcal{Y}\geq a\right)\leq2e^{-\frac{a^2}{4b^2}}\]
	for any \(a>0\), or reformulated, for any \(t>0\),
	\[\mathbb{P}\left(\left\lVert S_n\right\rVert_\mathcal{Y}\geq2b\sqrt{t}\right)\leq2e^{-t}.\]
\end{proposition}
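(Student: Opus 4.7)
The plan is to derive the tail bound from the expectation form in Proposition \ref{Phoeffdinghilbertexpectation} via an exponential Markov (Chernoff-style) argument, with $\cosh$ playing the role usually played by the exponential moment generating function.

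First I would fix $\lambda>0$ and apply Markov's inequality (Proposition \ref{Pmarkov}) to the non-negative random variable $\cosh(\lambda\lVert S_n\rVert_\mathcal{Y})$. Since $u\mapsto\cosh(\lambda u)$ is increasing on $[0,\infty)$, the event $\{\lVert S_n\rVert_\mathcal{Y}\geq a\}$ coincides with $\{\cosh(\lambda\lVert S_n\rVert_\mathcal{Y})\geq\cosh(\lambda a)\}$, so
\[\mathbb{P}(\lVert S_n\rVert_\mathcal{Y}\geq a)\leq\frac{\mathbb{E}[\cosh(\lambda\lVert S_n\rVert_\mathcal{Y})]}{\cosh(\lambda a)}.\]
Proposition \ref{Phoeffdinghilbertexpectation} bounds the numerator by $\prod_{i=1}^n e^{\lambda^2 c_i^2}=e^{\lambda^2 b^2}$. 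For the denominator I would use the elementary inequality $\cosh(\lambda a)\geq\tfrac{1}{2}e^{\lambda a}$ (immediate from the definition of $\cosh$), giving
\[\mathbb{P}(\lVert S_n\rVert_\mathcal{Y}\geq a)\leq 2\exp(\lambda^2 b^2-\lambda a).\]

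Second, I would optimise the free parameter $\lambda>0$. The exponent $\lambda^2 b^2-\lambda a$ is minimised at $\lambda=a/(2b^2)$, which yields
\[\mathbb{P}(\lVert S_n\rVert_\mathcal{Y}\geq a)\leq 2e^{-a^2/(4b^2)},\]
proving the first inequality. The reformulation is a trivial substitution: setting $a=2b\sqrt{t}$ gives $a^2/(4b^2)=t$, whence $\mathbb{P}(\lVert S_n\rVert_\mathcal{Y}\geq 2b\sqrt{t})\leq 2e^{-t}$.

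There is no real obstacle here: all of the substantive work (the derivative estimates, the Cauchy--Schwarz step, and the ODE comparison) has been absorbed into Proposition \ref{Phoeffdinghilbertexpectation}. The only points to be careful about are the factor of $2$ (which is the price for replacing $e^{\lambda a}$ by $\cosh(\lambda a)$, and is what distinguishes the Hilbert-space constant $1/4$ in the exponent from the real-valued constant $1/2$ in Proposition \ref{Phoeffdingreal}) and the monotonicity of $\cosh$ on $[0,\infty)$ that justifies the initial event rewriting.
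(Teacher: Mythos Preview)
Your proposal is correct and follows essentially the same route as the paper's own proof: apply Markov's inequality to $\cosh(\lambda\lVert S_n\rVert_\mathcal{Y})$, invoke Proposition~\ref{Phoeffdinghilbertexpectation} for the numerator, use $\cosh u\geq\tfrac{1}{2}e^u$ for the denominator, and optimise over $\lambda$. One minor quibble with your commentary (not the proof itself): the exponent constant $1/4$ versus the real-valued $1/2$ stems from Proposition~\ref{Phoeffdinghilbertexpectation} giving $e^{\lambda^2 c_i^2}$ rather than $e^{\frac{1}{2}\lambda^2 c_i^2}$, not from the $\cosh$-to-exponential step.
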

\begin{proof}
	See that
	\begin{alignat*}{3}
		\mathbb{P}\left(\left\lVert S_n\right\rVert_\mathcal{Y}\geq a\right)&=\mathbb{P}\left(\cosh\left(\lambda\left\lVert S_n\right\rVert_\mathcal{Y}\right)\geq\cosh\left(\lambda a\right)\right)\\
		&\leq\frac{1}{\cosh\left(\lambda a\right)}\mathbb{E}\left[\cosh\left(\lambda\left\lVert S_n\right\rVert_\mathcal{Y}\right)\right]&&\text{by Markov's inequality}\\
		&\leq\frac{1}{\cosh\left(\lambda a\right)}\prod^n_{i=1}e^{\lambda^2c^2_i}&&\text{by Proposition \ref{Phoeffdinghilbertexpectation}}\\
		&\leq2e^{\lambda^2b^2-\lambda a}&&\text{since }\cosh u\geq\frac{1}{2}e^u.
	\end{alignat*}
	Now let \(\lambda=\frac{a}{2b^2}\) for the first inequality, and then take \(a=2b\sqrt{t}\) for the reformulation. 
\end{proof}
\subsection{Gaussian Measures in Hilbert Spaces}\label{SSgaussian}
Next, we consider concentration of the Gaussian measure. In the real case, the Gaussian measure with mean \(\mu\) and variance \(q\) is defined as the measure that is absolutely continuous with respect to the Lebesgue measure and has density \(\frac{1}{\sqrt{2\pi q}}e^{-\frac{1}{2q}\left(x-\mu\right)^2}\). The Gaussian measure with mean 0 and variance 1 is called the standard Gaussian measure. For a real variable with the standard Gaussian distribution, the following concentration inequality can easily be derived.
\begin{lemma}\label{Lconcentrationrealgaussian}
	Let \(Z\) have the standard Gaussian distribution. Then for any \(a>0\), \(\mathbb{P}(Z\geq a)\leq e^{-\frac{1}{2}a^2}\). 
\end{lemma}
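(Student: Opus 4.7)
The plan is to apply a standard Chernoff-type argument: exponentiate, use Markov's inequality (Proposition \ref{Pmarkov}), and then optimise the free parameter. Concretely, for any $\lambda > 0$, I would write
\[\mathbb{P}(Z \geq a) = \mathbb{P}\bigl(e^{\lambda Z} \geq e^{\lambda a}\bigr) \leq e^{-\lambda a}\,\mathbb{E}\bigl[e^{\lambda Z}\bigr],\]
so the whole task reduces to computing (or bounding) the moment generating function of $Z$.

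For the MGF step I would complete the square inside the Gaussian density:
\[\mathbb{E}\bigl[e^{\lambda Z}\bigr] = \int_{-\infty}^{\infty} \frac{1}{\sqrt{2\pi}} e^{\lambda z - z^2/2}\, dz = e^{\lambda^2/2}\int_{-\infty}^{\infty} \frac{1}{\sqrt{2\pi}} e^{-(z-\lambda)^2/2}\, dz = e^{\lambda^2/2},\]
since the remaining integral is that of a $\mathcal{N}(\lambda, 1)$ density. Substituting this in yields $\mathbb{P}(Z \geq a) \leq e^{\lambda^2/2 - \lambda a}$ for every $\lambda > 0$.

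Finally, I would optimise the exponent $\lambda^2/2 - \lambda a$ in $\lambda$; the minimum is attained at $\lambda = a$ (which is positive by hypothesis), giving the bound $e^{-a^2/2}$ as claimed. There is no real obstacle here: the argument is entirely routine and mirrors the Chernoff step already used in the proof of Proposition \ref{Phoeffdingreal}. The only thing to be careful about is that one must choose $\lambda > 0$, which is exactly why the hypothesis $a > 0$ is needed for the optimal $\lambda = a$ to be admissible.
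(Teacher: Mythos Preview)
Your proposal is correct and essentially identical to the paper's proof: both apply Markov's inequality to $e^{\lambda Z}$, compute the Gaussian moment generating function by completing the square, and take $\lambda = a$. The only cosmetic difference is that the paper plugs in $\lambda = a$ from the outset rather than first writing the bound for general $\lambda$ and then optimising.
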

\begin{proof}
	See that, for any \(a>0\), Markov's inequality gives
	\begin{alignat*}{2}
		\mathbb{P}\left(Z\geq a\right)&=\mathbb{P}\left(e^{aZ}\geq e^{a^2}\right)\\
		&\leq e^{-a^2}\mathbb{E}\left[e^{aZ}\right]\\
		&=e^{-a^2}\int_\mathbb{R}\frac{1}{\sqrt{2\pi}}e^{-\frac{1}{2}(z-a)^2+\frac{1}{2}a^2}dz\\
		&=e^{\frac{1}{2}a^2-a^2}\\
		&=e^{-\frac{1}{2}a^2}.
	\end{alignat*}
\end{proof}
The definition of Gaussian measures can be extended to the separable Hilbert space \(\mathcal{Y}\). 
\begin{definition}{\citet[pp.46-47]{da2014stochastic}}]\label{Dgaussianhilbert}
	A random variable \(Y\) in \(\mathcal{Y}\) is Gaussian if, for any \(y\in\mathcal{Y}\), \(\langle Y,y\rangle_\mathcal{Y}\) is a real Gaussian random variable (with some mean and variance). 
\end{definition}
The next two lemmas are concerned with the \textit{mean} and \textit{covariance operator} of a \(\mathcal{Y}\)-valued Gaussian random variable. 
\begin{lemma}\label{Lgaussianintegrable}
	If \(Y\) is a Gaussian random variable in \(\mathcal{Y}\), then \(\mathbb{E}[\lVert Y\rVert^2_\mathcal{Y}]<\infty\). As a consequence, \(Y\) is Bochner integrable, and we call \(\mu=\mathbb{E}\left[Y\right]\in\mathcal{Y}\) the \textnormal{mean} of \(Y\). 
\end{lemma}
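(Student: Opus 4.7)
The plan is to reduce to the centered case via a closed-graph argument and then run a Fernique-type symmetrization, which produces a Gaussian-in-$t$ tail bound on $\lVert Y\rVert_\mathcal{Y}$ and hence finiteness of all polynomial moments.

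First I would establish the existence of a mean vector in $\mathcal{Y}$, so that I may replace $Y$ by $Y-\mu$. Define the linear map $T:\mathcal{Y}\to L^1(\Omega,\mathbb{P};\mathbb{R})$ by $T(y)=\langle Y,y\rangle_\mathcal{Y}$; this is well-defined because every real Gaussian is integrable. A short closed-graph argument (continuity of the inner product gives $\langle Y,y_n\rangle_\mathcal{Y}\to\langle Y,y\rangle_\mathcal{Y}$ pointwise whenever $y_n\to y$, and $L^1$-convergence admits an almost-surely convergent subsequence) shows $T$ is bounded, so $\lvert\mathbb{E}[\langle Y,y\rangle_\mathcal{Y}]\rvert\leq\lVert T\rVert\,\lVert y\rVert_\mathcal{Y}$. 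Riesz representation then produces $\mu\in\mathcal{Y}$ with $\mathbb{E}[\langle Y,y\rangle_\mathcal{Y}]=\langle\mu,y\rangle_\mathcal{Y}$ for all $y$. Since $\mathbb{E}[\lVert Y\rVert_\mathcal{Y}^2]<\infty$ iff $\mathbb{E}[\lVert Y-\mu\rVert_\mathcal{Y}^2]<\infty$, and $\langle Y-\mu,y\rangle_\mathcal{Y}$ remains Gaussian (now centered), I may assume $Y$ is itself centered.

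Now let $Y'$ be an independent copy of $Y$ and set $U=(Y+Y')/\sqrt{2}$, $V=(Y-Y')/\sqrt{2}$. For any $y_1,\ldots,y_k\in\mathcal{Y}$, the random vector formed by taking scalar projections of $(U,V)$ against $y_1,\ldots,y_k$ is a linear combination of the independent Gaussians $\{\langle Y,y_j\rangle_\mathcal{Y},\langle Y',y_j\rangle_\mathcal{Y}\}$---hence jointly Gaussian with mean zero---and bilinearity of the covariance matches its covariance matrix to that of the analogous vector for $(Y,Y')$. Hence $(U,V)\stackrel{d}{=}(Y,Y')$, so $U$ and $V$ are independent copies of $Y$. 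When $\lVert Y\rVert_\mathcal{Y}>t$ and $\lVert Y'\rVert_\mathcal{Y}\leq s$ with $t>s$, the reverse triangle inequality forces both $\lVert U\rVert_\mathcal{Y}$ and $\lVert V\rVert_\mathcal{Y}$ to exceed $(t-s)/\sqrt{2}$, whence
\[\mathbb{P}(\lVert Y\rVert_\mathcal{Y}>t)\,\mathbb{P}(\lVert Y\rVert_\mathcal{Y}\leq s)\leq\mathbb{P}\!\left(\lVert Y\rVert_\mathcal{Y}>\tfrac{t-s}{\sqrt{2}}\right)^{2}.\]
Since $Y\in\mathcal{Y}$ a.s., $\lVert Y\rVert_\mathcal{Y}<\infty$ a.s., and I can pick $s$ so that $q\vcentcolon=2\mathbb{P}(\lVert Y\rVert_\mathcal{Y}>s)<1$. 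Iterating along $t_{n+1}=s+\sqrt{2}\,t_n$ gives $2\mathbb{P}(\lVert Y\rVert_\mathcal{Y}>t_n)\leq q^{2^n}$; since $t_n$ grows like $(\sqrt{2})^n$ while $q^{2^n}$ decays doubly exponentially, interpolating in $n$ yields a tail bound of the form $\mathbb{P}(\lVert Y\rVert_\mathcal{Y}>t)\leq Ce^{-\alpha t^2}$ for some $\alpha>0$ and all sufficiently large $t$. Integration then gives $\mathbb{E}[\lVert Y\rVert_\mathcal{Y}^2]<\infty$ (indeed all polynomial moments), and Bochner integrability of $Y$ follows by Cauchy--Schwarz together with strong measurability, which is automatic in a separable Hilbert space (Pettis's theorem).

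The step I expect to be most delicate is the rotation identity $(U,V)\stackrel{d}{=}(Y,Y')$: the paper's definition stipulates only that individual scalar projections of $Y$ are Gaussian, so I need to promote this to joint Gaussianity of finite-dimensional marginals of $(U,V)$. Fortunately, linearity of $y\mapsto\langle Y,y\rangle_\mathcal{Y}$ turns any linear combination $\sum c_j\langle Y,y_j\rangle_\mathcal{Y}=\langle Y,\sum c_jy_j\rangle_\mathcal{Y}$ into a real Gaussian, so finite-dimensional marginals are automatically jointly Gaussian and matching distributions reduces to matching means and covariances. The remaining pieces---the tail iteration and the closed-graph reduction to the centered case---are essentially bookkeeping.
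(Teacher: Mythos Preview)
Your proof is correct and takes a genuinely different route from the paper's. Both arguments first produce a mean vector by showing that the linear functional $y\mapsto\mathbb{E}[\langle Y,y\rangle_\mathcal{Y}]$ is bounded, but the paper does this via a Baire category argument on the sets $U_n=\{y:\mathbb{E}\lvert\langle Y,y\rangle_\mathcal{Y}\rvert\leq n\}$, whereas you use the closed graph theorem applied to $T:\mathcal{Y}\to L^1(\mathbb{P})$; your version is arguably cleaner. The real divergence is in the second step. The paper fixes an orthonormal basis $\{e_j\}$, writes $Z_j=\langle Y-\mu,e_j\rangle_\mathcal{Y}$ with variance $q_j$, then picks a level $c$ with $\mathbb{P}(\lVert Y-\mu\rVert_\mathcal{Y}>c)\leq\tfrac18$, sets $c_j=\mathbb{E}[Z_j^2\mathbf{1}_{\lVert Y-\mu\rVert_\mathcal{Y}\leq c}]$ (a summable sequence), and uses the characteristic-function identity $\mathbb{E}[1-\cos\tilde Z_j]=1-e^{-q_j/(2c_j)}$ together with $1-\cos u\leq\tfrac12 u^2$ to force $q_j\leq 8c_j$, hence $\sum q_j<\infty$. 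Your argument is instead Fernique's rotation trick: the rotation $(U,V)=((Y+Y')/\sqrt2,(Y-Y')/\sqrt2)$ preserves the law of $(Y,Y')$, and iterating the resulting tail inequality yields a Gaussian tail $\mathbb{P}(\lVert Y\rVert_\mathcal{Y}>t)\leq Ce^{-\alpha t^2}$. Your approach is basis-free and in fact proves more---it gives $\mathbb{E}[e^{\alpha\lVert Y\rVert_\mathcal{Y}^2}]<\infty$ for small $\alpha$, not just square integrability---at the cost of needing an independent copy and the distributional identity $(U,V)\stackrel{d}{=}(Y,Y')$, which you correctly justify by reducing to one-dimensional marginals via linearity. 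The paper's approach is more elementary in that it stays entirely within one copy of $Y$ and uses only Fourier-analytic estimates on scalar Gaussians.
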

\begin{proof}
	First, consider the map \(M:\mathcal{Y}\rightarrow\mathbb{R}\) defined by \(M(y)=\mathbb{E}\left[\left\langle y,Y\right\rangle_\mathcal{Y}\right]\), which is clearly linear. For each \(n\in\mathbb{N}\), we can define the set \(U_n=\left\{y\in\mathcal{Y}:\mathbb{E}\left[\left\lvert\left\langle y,Y\right\rangle_\mathcal{Y}\right\rvert\right]\leq n\right\}\). Since, for each \(y\), \(\left\langle y,Y\right\rangle_\mathcal{Y}\) is Gaussian, \(\mathbb{E}\left[\left\lvert\left\langle y,Y\right\rangle_\mathcal{Y}\right\rvert\right]<\infty\), and so \(\mathcal{Y}=\cup^\infty_{n=1}U_n\). Each \(U_n\) is closed, so by the Baire category theorem \citep[p.76, Theorem 1']{bollobas1999linear}, there exist some \(n_0\in\mathbb{N}\), \(y_0\in U_{n_0}\) and \(r_0>0\) such that
	\[\mathbb{E}\left[\left\lvert\left\langle y_0+y,Y\right\rangle_\mathcal{Y}\right\rvert\right]\leq n_0\qquad\text{for all }y\in\mathcal{Y}\text{ with }\left\lVert y\right\rVert_\mathcal{Y}\leq r_0.\]
	So for any non-zero \(y\in\mathcal{Y}\),
	\[\left\lvert M(y)\right\rvert\leq\mathbb{E}\left[\left\lvert\left\langle y,Y\right\rangle_\mathcal{Y}\right\rvert\right]\leq\frac{\left\lVert y\right\rVert_\mathcal{Y}}{r_0}\mathbb{E}\left[\left\lvert\left\langle y_0,Y\right\rangle_\mathcal{Y}\right\rvert+\left\lvert\left\langle y_0+\frac{r_0y}{\left\lVert y\right\rVert_\mathcal{Y}},Y\right\rangle_\mathcal{Y}\right\rvert\right]\leq\frac{2\left\lVert y\right\rVert_\mathcal{Y}n_0}{r_0},\]
	which implies that \(M\) is continuous. By Riesz representation theorem \citep[p.137, Theorem 9]{bollobas1999linear}, there exists some \(\tilde{\mu}\in\mathcal{Y}\) such that \(M(y)=\left\langle\tilde{\mu},y\right\rangle_\mathcal{Y}\) for all \(y\in\mathcal{Y}\). Note that \(Y-\tilde{\mu}\) is also Gaussian, since, for any \(y\in\mathcal{Y}\), \(\left\langle Y-\tilde{\mu},y\right\rangle_\mathcal{Y}=\left\langle Y,y\right\rangle_\mathcal{Y}-\left\langle\tilde{\mu},y\right\rangle_\mathcal{Y}\) is a real Gaussian variable, and in particular, for any \(y\in\mathcal{Y}\), \(\mathbb{E}\left[\left\langle Y-\tilde{\mu},y\right\rangle_\mathcal{Y}\right]=\mathbb{E}\left[\left\langle Y,y\right\rangle_\mathcal{Y}\right]-\left\langle\tilde{\mu},y\right\rangle_\mathcal{Y}=M(y)-M(y)=0\). 
	
	Now let \(\{e_j\}_{j=1}^\infty\) be an orthonormal basis of \(\mathcal{Y}\). For each \(j\), from above, \(Z_j\vcentcolon=\left\langle Y-\tilde{\mu},e_j\right\rangle_\mathcal{Y}\) is a zero-mean real Gaussian variable, say with variance \(q_j\). Then we have \(\mathbb{E}\left[\left\lVert Y-\tilde{\mu}\right\rVert^2_\mathcal{Y}\right]=\sum^\infty_{j=1}\mathbb{E}\left[Z_j^2\right]=\sum^\infty_{j=1}q_j\). Now, there exists some \(c>0\) such that \(\mathbb{P}\left(\left\lVert Y-\tilde{\mu}\right\rVert_\mathcal{Y}>c\right)\leq\frac{1}{8}\). Then see that, for each \(j=1,2,...\), writing \(c_j=\mathbb{E}\left[Z_j^2\mathbf{1}_{\left\lVert Y-\tilde{\mu}\right\rVert_\mathcal{Y}\leq c}\right]\), the sequence \(\{c_j\}_{j=1}^\infty\) is summable, since
	\[\sum^\infty_{j=1}c_j=\sum^\infty_{j=1}\mathbb{E}\left[Z_j^2\mathbf{1}_{\left\lVert Y-\tilde{\mu}\right\rVert_\mathcal{Y}\leq c}\right]=\mathbb{E}\left[\left\lVert Y-\tilde{\mu}\right\rVert_\mathcal{Y}^2\mathbf{1}_{\left\lVert Y-\tilde{\mu}\right\rVert_\mathcal{Y}\leq c}\right]\leq c^2<\infty.\]
	For each \(j\in\mathbb{N}\) such that \(c_j>0\), \(\tilde{Z}_j=\frac{Z_j}{\sqrt{c_j}}\) is a zero-mean real Gaussian random variable with variance \(\frac{q_j}{c_j}\). Hence, the characteristic function of \(\tilde{Z}_j\) is \(\mathbb{E}\left[e^{i\tilde{Z}_j}\right]=e^{-\frac{q_j}{2c_j}}\), and so
	\begin{alignat*}{2}
		1-e^{-\frac{q_j}{2c_j}}&=\mathbb{E}\left[1-\cos\tilde{Z}_j\right]\\
		&=\mathbb{E}\left[\left(1-\cos\tilde{Z}_j\right)\mathbf{1}_{\left\lVert Y-\tilde{\mu}\right\rVert_\mathcal{Y}\leq c}\right]+\mathbb{E}\left[\left(1-\cos\tilde{Z}_j\right)\mathbf{1}_{\left\lVert Y-\tilde{\mu}\right\rVert_\mathcal{Y}>c}\right]\\
		&\leq\frac{1}{2}\mathbb{E}\left[\tilde{Z}_j^2\mathbf{1}_{\left\lVert Y-\tilde{\mu}\right\rVert_\mathcal{Y}\leq c}\right]+2\mathbb{P}\left(\left\lVert Y-\tilde{\mu}\right\rVert_\mathcal{Y}>c\right)\text{by }1-\cos u\leq\frac{1}{2}u^2\text{ and }1-\cos u\leq2\\
		&=\frac{1}{2c_j}\mathbb{E}\left[Z_j^2\mathbf{1}_{\left\lVert Y-\tilde{\mu}\right\rVert_\mathcal{Y}\leq c}\right]+\frac{1}{4}\quad\text{since }c\text{ was defined to give }\mathbb{P}\left(\left\lVert Y-\tilde{\mu}\right\rVert_\mathcal{Y}>c\right)\leq\frac{1}{8}\\
		&\leq\frac{3}{4}\qquad\qquad\text{by the definition of }c_j
	\end{alignat*}
	\begin{alignat*}{2}
		\implies\qquad&e^{-\frac{q_j}{2c_j}}\geq\frac{1}{4}\\
		\implies\qquad&\frac{q_j}{2c_j}\leq e^{\frac{q_j}{2c_j}}\leq4\\
		\implies\qquad&q_j\leq8c_j.
	\end{alignat*}
	For \(j\in\mathbb{N}\) such that \(c_j=\mathbb{E}\left[Z_j^2\mathbf{1}_{\left\lVert Y-\tilde{\mu}\right\rVert_\mathcal{Y}\leq c}\right]=0\), this means that we have \(\mathbb{P}\left(Z_j=0\right)\geq\mathbb{P}\left(\left\lVert Y-\tilde{\mu}\right\rVert_\mathcal{Y}\leq c\right)\geq\frac{7}{8}\), but \(Z_j\) is Gaussian, so we must have \(Z_j=0\), i.e. \(q_j=0\). Hence, we have \(q_j\leq8c_j\) in this case too, and since \(\{c_j\}_{j=1}^\infty\) is summable,
	\[\mathbb{E}\left[\left\lVert Y-\tilde{\mu}\right\rVert^2_\mathcal{Y}\right]=\sum^\infty_{j=1}q_j\leq8\sum^\infty_{j=1}c^2_j<\infty.\]
	We can now finish the proof by noting that
	\[\mathbb{E}\left[\left\lVert Y\right\rVert^2_\mathcal{Y}\right]\leq2\mathbb{E}\left[\left\lVert Y-\tilde{\mu}\right\rVert^2_\mathcal{Y}\right]+2\left\lVert\tilde{\mu}\right\rVert^2_\mathcal{Y}<\infty,\]
	using the elementary Hilbert space inequality \(\lVert a+b\rVert^2\leq2\lVert a\rVert^2+2\lVert b\rVert^2\). 
\end{proof}
Denote by \(\mathscr{L}(\mathcal{Y})\) the Banach space of continuous linear operators from \(\mathcal{Y}\) into itself, with the operator norm. 
\begin{lemma}\label{Lgaussiancovariance}
	For a \(\mathcal{Y}\)-valued Gaussian variable \(Y\) with mean \(\mu\), the random operator \(\left(Y-\mu\right)\otimes\left(Y-\mu\right):\mathcal{Y}\rightarrow\mathcal{Y}\) defined by \(\left(Y-\mu\right)\otimes\left(Y-\mu\right)(y)=\left\langle Y-\mu,y\right\rangle_\mathcal{Y}\left(Y-\mu\right)\) is continuous and linear, and as a random variable taking values in \(\mathscr{L}(\mathcal{Y})\), is Bochner integrable. We call \(\Phi=\mathbb{E}\left[\left(Y-\mu\right)\otimes\left(Y-\mu\right)\right]\in\mathscr{L}(\mathcal{Y})\) the \textnormal{covariance operator} of \(Y\). The covariance operator \(\Phi\) is self-adjoint and trace-class. 
\end{lemma}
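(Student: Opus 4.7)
The plan is to handle the four assertions of the lemma in sequence, each reducing to a routine calculation powered by the finiteness of $\mathbb{E}[\|Y-\mu\|_\mathcal{Y}^2]$ guaranteed by Lemma \ref{Lgaussianintegrable}.

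First, for each sample point the map $y \mapsto \langle Y-\mu, y\rangle_\mathcal{Y}(Y-\mu)$ is linear in $y$ by the linearity of the inner product in its second argument, and Cauchy-Schwarz yields
$$\|(Y-\mu) \otimes (Y-\mu)\|_\textnormal{op} \leq \|Y-\mu\|_\mathcal{Y}^2.$$
This simultaneously establishes continuity of the operator and provides an integrable majorant for the next step.

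For Bochner integrability of $(Y-\mu)\otimes(Y-\mu)$ viewed as an $\mathscr{L}(\mathcal{Y})$-valued random variable, I would verify strong measurability by noting that the map $y \mapsto y \otimes y$ from $\mathcal{Y}$ into $\mathscr{L}(\mathcal{Y})$ is continuous, via the identity $y_1 \otimes y_1 - y_2 \otimes y_2 = (y_1-y_2) \otimes y_1 + y_2 \otimes (y_1-y_2)$ combined with the operator-norm bound above, and has separable range (the rank-one operators built from a countable dense subset of $\mathcal{Y}$), so that the composition with $Y-\mu$ is strongly measurable. Finiteness of $\mathbb{E}\bigl[\|(Y-\mu)\otimes(Y-\mu)\|_\textnormal{op}\bigr]$ is immediate from the bound above and Lemma \ref{Lgaussianintegrable}. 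For self-adjointness of $\Phi$, I would invoke the standard fact that bounded linear functionals commute with Bochner integration to obtain, for any $y_1, y_2 \in \mathcal{Y}$,
$$\langle \Phi y_1, y_2 \rangle_\mathcal{Y} = \mathbb{E}\bigl[\langle Y-\mu, y_1\rangle_\mathcal{Y} \langle Y-\mu, y_2\rangle_\mathcal{Y}\bigr],$$
which is manifestly symmetric in $y_1$ and $y_2$.

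For the trace-class property, fix an orthonormal basis $\{e_j\}_{j=1}^\infty$ of $\mathcal{Y}$. Positivity of $\Phi$ follows immediately from $\langle \Phi y, y\rangle_\mathcal{Y} = \mathbb{E}[\langle Y-\mu, y\rangle_\mathcal{Y}^2] \geq 0$, so it suffices to prove $\sum_j \langle \Phi e_j, e_j \rangle_\mathcal{Y} < \infty$. Applying Tonelli's theorem to the nonnegative summands and then Parseval's identity gives
$$\sum_{j=1}^\infty \langle \Phi e_j, e_j \rangle_\mathcal{Y} = \sum_{j=1}^\infty \mathbb{E}\bigl[\langle Y-\mu, e_j\rangle_\mathcal{Y}^2\bigr] = \mathbb{E}[\|Y-\mu\|_\mathcal{Y}^2] < \infty.$$
The main obstacle is the strong measurability step, since $\mathscr{L}(\mathcal{Y})$ is typically non-separable in the operator norm when $\mathcal{Y}$ is infinite-dimensional; this is circumvented by restricting attention to the separable cone of rank-one operators on which our random variable lives. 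Everything else is a direct consequence of Lemma \ref{Lgaussianintegrable} and standard properties of Bochner integration.
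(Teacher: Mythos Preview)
Your proof is correct and follows essentially the same route as the paper: linearity is immediate, continuity and the operator-norm bound come from Cauchy--Schwarz, integrability of the norm follows from Lemma~\ref{Lgaussianintegrable}, and self-adjointness and the trace computation are done via the same inner-product and Parseval calculations. You are in fact more careful than the paper on one point: the paper silently skips the strong measurability verification for the $\mathscr{L}(\mathcal{Y})$-valued random variable, whereas you correctly flag that $\mathscr{L}(\mathcal{Y})$ is non-separable and resolve it by observing that the range lies in the separable set of rank-one operators.
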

\begin{proof}
	Linearity is obvious. For continuity, see that, for any \(y\in\mathcal{Y}\), the Cauchy-Schwarz inequality gives
	\[\left\lVert\left(Y-\mu\right)\otimes\left(Y-\mu\right)(y)\right\rVert_\mathcal{Y}=\left\lvert\left\langle Y-\mu,y\right\rangle_\mathcal{Y}\right\rvert\left\lVert Y-\mu\right\rVert_\mathcal{Y}\leq\left\lVert Y-\mu\right\rVert_\mathcal{Y}^2\left\lVert y\right\rVert_\mathcal{Y}.\]
	Now see that, by the Cauchy-Schwarz inequality and Lemma \ref{Lgaussianintegrable}, 
	\[\mathbb{E}\left[\left\lVert\left(Y-\mu\right)\otimes\left(Y-\mu\right)\right\rVert_\text{op}\right]=\mathbb{E}\left[\left\lVert Y-\mu\right\rVert_\mathcal{Y}\sup_{y\in\mathcal{Y},\left\lVert y\right\rVert\leq1}\left\lvert\left\langle Y-\mu,y\right\rangle_\mathcal{Y}\right\rvert\right]\leq\mathbb{E}\left[\left\lVert Y-\mu\right\rVert_\mathcal{Y}^2\right]<\infty,\]
	as \(Y-\mu\) is a Gaussian variable in \(\mathcal{Y}\). Clearly, for any \(y_1,y_2\in\mathcal{Y}\),
	\[\left\langle y_1,\Phi y_2\right\rangle_\mathcal{Y}=\mathbb{E}\left[\left\langle y_1,Y-\mu\right\rangle_\mathcal{Y}\left\langle y_2,Y-\mu\right\rangle_\mathcal{Y}\right]=\left\langle \Phi y_1,y_2\right\rangle_\mathcal{Y},\]
	so \(\Phi\) is self-adjoint. Now see that, for any orthonormal basis \(\{e_j\}_{j=1}^\infty\) of \(\mathcal{Y}\),
	\[\text{Tr}\Phi=\sum^\infty_{j=1}\left\langle\Phi e_j,e_j\right\rangle_\mathcal{Y}=\sum^\infty_{j=1}\mathbb{E}\left[\left\langle Y-\mu,e_j\right\rangle_\mathcal{Y}^2\right]=\mathbb{E}\left[\left\lVert Y-\mu\right\rVert_\mathcal{Y}^2\right]<\infty,\]
	so \(Q\) is trace-class. 
\end{proof}
Some authors (e.g. \citet[p.24]{bharucha1972random}) refer to the quantity
\[\text{Tr}\Phi=\mathbb{E}\left[\left\lVert\left(Y-\mu\right)\otimes\left(Y-\mu\right)\right\rVert_\text{op}\right]=\mathbb{E}\left[\left\lVert Y-\mu\right\rVert^2_\mathcal{Y}\right]\]
as the \say{variance} of \(Y\) . 

For a random variable \(Y\) on \(\mathcal{Y}\), its \textit{characteristic function} is defined as the functional \(\varphi_Y:\mathcal{Y}\rightarrow\mathbb{C}\) defined by \(\varphi_Y(y)=\mathbb{E}\left[e^{i\left\langle Y,y\right\rangle_\mathcal{Y}}\right]\) \citep[pp.34-35]{da2014stochastic}. As for real variables, the characteristic function uniquely determines the distribution of the random variable \citep[p.35, Proposition 2.5(i)]{da2014stochastic}. Clearly, the characteristic function of a Gaussian variable \(Y\) with mean \(\mu\) and covariance operator \(\Phi\) is given by
\[\varphi_Y(y)=e^{i\left\langle\mu,y\right\rangle_\mathcal{Y}-\frac{1}{2}\left\langle\Phi y,y\right\rangle_\mathcal{Y}},\qquad y\in\mathcal{Y},\]
so a Gaussian distribution is uniquely determined by its mean and covariance operator. 

The next result gives a concentration result for Gaussian random variables in separable Hilbert spaces. 
\begin{proposition}\label{Pconcentrationgaussian}
	Suppose that \(Y\) is a Gaussian random variable in \(\mathcal{Y}\) with mean 0 and covariance operator \(\Phi\). Then for any \(0<\lambda<\frac{1}{2\textnormal{Tr}\Phi}\),
	\[\mathbb{E}\left[e^{\lambda\left\lVert Y\right\rVert^2_\mathcal{Y}}\right]\leq\frac{1}{\sqrt{1-2\lambda\textnormal{Tr}\Phi}}\]
	and consequently, 
	\[\mathbb{P}\left(\left\lVert Y\right\rVert_\mathcal{Y}\geq a\right)\leq2e^{-\frac{3a^2}{8\textnormal{Tr}\Phi}}.\]
\end{proposition}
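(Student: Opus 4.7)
The plan is to diagonalise $\Phi$ via the spectral theorem and reduce the problem to a sum of independent real Gaussians, which can be handled with classical chi-squared moment generating function computations.

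First, since $\Phi$ is self-adjoint, compact (being trace-class) and positive (as $\langle \Phi y, y\rangle_\mathcal{Y} = \mathbb{E}[\langle Y, y\rangle_\mathcal{Y}^2] \geq 0$), I will invoke the spectral theorem to obtain an orthonormal basis $\{e_j\}_{j=1}^\infty$ of $\mathcal{Y}$ consisting of eigenvectors, with non-negative eigenvalues $q_j$ satisfying $\sum_j q_j = \textnormal{Tr}\Phi$. Define $Z_j \vcentcolon= \langle Y, e_j\rangle_\mathcal{Y}$. By Definition \ref{Dgaussianhilbert} every finite linear combination $\sum_j a_j Z_j = \langle Y, \sum_j a_j e_j\rangle_\mathcal{Y}$ is real Gaussian, so the $Z_j$ are jointly Gaussian with $\mathbb{E}[Z_j Z_k] = \langle \Phi e_k, e_j\rangle_\mathcal{Y} = q_j \delta_{jk}$. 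Pairwise uncorrelation of jointly Gaussian variables gives mutual independence, so $\{Z_j\}$ are independent with $Z_j \sim N(0, q_j)$. Parseval's identity then gives $\|Y\|_\mathcal{Y}^2 = \sum_{j=1}^\infty Z_j^2$.

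Next, for each $j$ with $q_j > 0$, the standard computation yields $\mathbb{E}[e^{\lambda Z_j^2}] = (1-2\lambda q_j)^{-1/2}$ for $0 < \lambda < 1/(2q_j)$ (and the identity $1$ if $q_j = 0$). Since $q_j \leq \textnormal{Tr}\Phi$ for every $j$, the restriction $0 < \lambda < 1/(2\textnormal{Tr}\Phi)$ ensures this for all $j$ simultaneously. By monotone convergence applied to the partial sums $\sum_{j=1}^N Z_j^2$ together with independence,
\[
\mathbb{E}\bigl[e^{\lambda \|Y\|_\mathcal{Y}^2}\bigr] = \prod_{j=1}^\infty (1-2\lambda q_j)^{-1/2}.
\]
To compare this with $(1-2\lambda \textnormal{Tr}\Phi)^{-1/2}$, I would take logarithms and expand each $-\log(1-2\lambda q_j) = \sum_{k=1}^\infty (2\lambda q_j)^k / k$ as a power series, so that the log of the product becomes $\sum_{k=1}^\infty \frac{(2\lambda)^k}{k}\sum_j q_j^k$. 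The elementary inequality $\sum_j q_j^k \leq \bigl(\sum_j q_j\bigr)^k$, which holds for all non-negative $q_j$ and $k \geq 1$ since cross terms in the multinomial expansion are non-negative, then yields $\sum_j \log(1-2\lambda q_j)^{-1} \leq \log(1 - 2\lambda \textnormal{Tr}\Phi)^{-1}$, giving the first assertion.

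Finally, for the tail bound, I apply Markov's inequality (Proposition \ref{Pmarkov}) to $e^{\lambda \|Y\|_\mathcal{Y}^2}$:
\[
\mathbb{P}(\|Y\|_\mathcal{Y} \geq a) \leq e^{-\lambda a^2}\,\mathbb{E}\bigl[e^{\lambda \|Y\|_\mathcal{Y}^2}\bigr] \leq \frac{e^{-\lambda a^2}}{\sqrt{1 - 2\lambda \textnormal{Tr}\Phi}},
\]
and choose $\lambda = \frac{3}{8\,\textnormal{Tr}\Phi}$, so that $1 - 2\lambda \textnormal{Tr}\Phi = \tfrac{1}{4}$ and the prefactor becomes exactly $2$, yielding the stated $2 e^{-3a^2/(8\,\textnormal{Tr}\Phi)}$. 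The main obstacle is the passage from the product to the $(1-2\lambda \textnormal{Tr}\Phi)^{-1/2}$ bound via the power-series comparison; everything else is routine once the spectral decomposition reduces the problem to independent one-dimensional Gaussians.
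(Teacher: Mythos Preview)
Your proposal is correct and follows essentially the same route as the paper: spectral decomposition of $\Phi$, reduction to independent one-dimensional Gaussians $Z_j$, computation of the chi-squared moment generating function, and Markov's inequality with the choice $\lambda = 3/(8\,\textnormal{Tr}\Phi)$. The only difference is that the paper asserts $\prod_j (1-2\lambda q_j)^{-1/2} \leq (1-2\lambda\sum_j q_j)^{-1/2}$ directly (equivalently, the Weierstrass-type product inequality $\prod_j(1-x_j)\geq 1-\sum_j x_j$), whereas you justify it via power-series expansion; both are valid and the rest is identical.
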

\begin{proof}
	By Lemma \ref{Lgaussiancovariance}, the covariance operator \(\Phi\) is self-adjoint and trace-class, so it is compact \citep[p.89, Theorem 18.11(b)]{conway2000course}. Then the spectral theorem \citep[p.46, Theorem 5.1]{conway1990course} tells us that there exists an orthonormal basis \(\{e_j\}_{j=1}^\infty\) of \(\mathcal{Y}\) of eigenvectors of \(\Phi\), with corresponding eigenvalues \(\lambda_j\). For each \(j\in\mathbb{N}\), \(Z_j=\left\langle Y,e_j\right\rangle_\mathcal{Y}\) is a zero-mean real Gaussian variable with variance \(\mathbb{E}\left[Z_j^2\right]=\left\langle\Phi e_j,e_j\right\rangle_\mathcal{Y}=\lambda_j\), and moreover, for \(j\neq k\), \(\mathbb{E}\left[Z_jZ_k\right]=\left\langle\Phi e_j,e_k\right\rangle_\mathcal{Y}=\left\langle\lambda_je_j,e_k\right\rangle_\mathcal{Y}=0\). This means that, for any \(N\in\mathbb{N}\), \((Z_1,...,Z_N)\) are mutually independent. Then see that, for any \(0<\lambda<\frac{1}{2\text{Tr}\Phi}\),
	\begin{alignat*}{2}
		\mathbb{E}\left[e^{\lambda\sum^N_{j=1}Z_j^2}\right]&=\prod^N_{j=1}\mathbb{E}\left[e^{\lambda Z_j^2}\right]\\
		&=\prod^N_{j=1}\frac{1}{\sqrt{2\pi\lambda_j}}\int^\infty_{-\infty}e^{\lambda z^2-\frac{z^2}{2\lambda_j}}dz\\
		&=\prod^N_{j=1}\frac{1}{\sqrt{1-2\lambda_j\lambda}}\\
		&\leq\frac{1}{\sqrt{1-2\lambda\sum^N_{j=1}\lambda_j}},
	\end{alignat*}
	and letting \(N\rightarrow\infty\), we obtain
	\[\mathbb{E}\left[e^{\lambda\left\lVert Y\right\rVert^2_\mathcal{Y}}\right]\leq\frac{1}{\sqrt{1-2\lambda\text{Tr}\Phi}}.\]
	Now, for any \(a>0\) and \(0<\lambda<\frac{1}{2\text{Tr}\Phi}\), Markov's inequality gives
	\[\mathbb{P}\left(\left\lVert Y\right\rVert_\mathcal{Y}\geq a\right)=\mathbb{P}\left(e^{\lambda\left\lVert Y\right\rVert_\mathcal{Y}^2}\geq e^{\lambda a^2}\right)\leq e^{-\lambda a^2}\mathbb{E}\left[e^{\lambda\left\lVert Y\right\rVert^2_\mathcal{Y}}\right]\leq e^{-\lambda a^2}\frac{1}{\sqrt{1-2\lambda\text{Tr}\Phi}}.\]
	Let \(\lambda=\frac{3}{8\text{Tr}\Phi}\) to finish the proof. 
\end{proof}

\section{Differential Calculus}\label{Sdiffcalcfull}
Recall that \(\mathcal{Y}\) is a Hilbert space. Suppose that \(U\) is an open subset of \(\mathbb{R}^d\), and denote the Euclidean norm in \(\mathbb{R}^d\) by \(\lVert\cdot\rVert\). We say that \(f_1,f_2:U\rightarrow\mathcal{Y}\) are \textit{tangent} at a point \(a\in U\) \citep[p.28]{cartan1967calcul} if the quantity
\[m(r)=\sup_{\left\lVert x-a\right\rVert\leq r}\left\lVert f_1(x)-f_2(x)\right\rVert_\mathcal{Y},\]
which is defined for \(r>0\) small enough (since \(U\) is open), satisfies the condition
\[\lim_{r\rightarrow0}\frac{m(r)}{r}=0,\qquad\text{which we also write as}\qquad m(r)=o(r).\]
We say that the map \(g:U\rightarrow\mathcal{Y}\) is \textit{differentiable} at \(a\in U\) if \(g\) is continuous at \(a\) and there exists a linear map \(g'(a):\mathbb{R}^d\rightarrow\mathcal{Y}\) such that the maps \(x\mapsto g(x)-g(a)\) and \(x\mapsto g'(a)(x-a)\) are tangent at \(a\) \citep[p.29]{cartan1967calcul}. This condition is also written as
\[\left\lVert g(x)-g(a)-g'(a)(x-a)\right\rVert_\mathcal{Y}=o(\left\lVert x-a\right\rVert).\] 
This immediately implies that \(g'(a)\) is continuous, so \(g'(a)\) belongs to \(\mathscr{L}(\mathbb{R}^d,\mathcal{Y})\), the space of continuous linear operators from \(\mathbb{R}^d\) into \(\mathcal{Y}\). We call \(g'(a)\in\mathcal{Y}\) the \textit{derivative} of \(g\) at \(a\). We say that \(g\) is differentiable on \(U\) if \(g\) is differentiable at every point in \(U\), and the map \(g':U\rightarrow\mathcal{Y}\) is called the \textit{derivative map} of \(g\). We say that \(g\) is \textit{continuously differentiable}, or \textit{of class \(C^1\)}, if \(g\) is differentiable at every point of \(U\) and the map \(g':U\rightarrow\mathcal{Y}\) is continuous \citep[p.30]{cartan1967calcul}. 

Let \(g:U\rightarrow\mathcal{Y}\) be a continuous map. For each \(a=(a_1,...,a_d)\in U\) and each \(l=1,...,d\), consider the inclusion \(\lambda_l:\mathbb{R}\rightarrow\mathbb{R}^d\) defined by
\[\lambda_l(x_l)=(a_1,...,a_{l-1},x_l,a_{l+1},...,a_d).\]
The composition \(g\circ\lambda_l\) is defined on an open subset \(\lambda_l^{-1}(\mathcal{X})\subset\mathbb{R}\), which contains \(a_l\). If \(g\) is differentiable at \(a\), then for each \(l=1,...,d\), the map \(g\circ\lambda_l\) is differentiable at \(a_l\) \citep[p.38, Proposition 2.6.1]{cartan1967calcul}. The derivative of \(g\circ\lambda_l\) at \(a\) is called the \textit{partial derivative} of \(g\), denoted by \(\partial_lg(a)\), and lives in \(\mathscr{L}(\mathbb{R},\mathcal{Y})\). But \(\mathscr{L}(\mathbb{R},\mathcal{Y})\) is isometrically isomorphic to \(\mathcal{Y}\) \citep[p.20, Exemple 1]{cartan1967calcul}, so we can view \(\partial_lg(a)\) as an element of \(\mathcal{Y}\). Moreover,
\[g'(a)(h)=g'(a)(h_1,...,h_d)=\sum^d_{l=1}h_l\partial_lg(a),\qquad\text{for }h=(h_1,...,h_d)\in\mathbb{R}^d.\]
\citet[p.40, Proposition 2.6.2]{cartan1967calcul} tells us that \(g\) is of class \(C^1\) if and only if \(\partial_lg:U\rightarrow\mathcal{Y}\) is continuous for each \(l=1,...,d\). 

Next, we consider higher-order derivatives. For an integer \(m\), a map \(F:(\mathbb{R}^d)^m\rightarrow\mathcal{Y}\) is \textit{\(m\)-linear} if, for each \(k=1,...,m\) and any \(a^{(1)},...,a^{(k-1)},a^{(k+1)},...,a^{(m)}\in\mathbb{R}^d\), the map \(x\mapsto F(a^{(1)},...,a^{(k-1)},x,a^{(k+1)},...,a^{(m)})\) is linear from \(\mathbb{R}^d\) into \(\mathcal{Y}\) \citep[p.24]{cartan1967calcul}. We say that \(F\) is an \(m\)-linear map from \(\mathbb{R}^d\) into \(\mathcal{Y}\), and denote by \(\mathscr{L}_m(\mathbb{R}^d;\mathcal{Y})\) the space of all continuous \(m\)-linear maps from \(\mathbb{R}^d\) into \(\mathcal{Y}\)\footnote{Beware that \(\mathscr{L}_m(\mathbb{R}^d;\mathcal{Y})\), the space of continuous \(m\)-linear maps from \(\mathbb{R}^d\) into \(\mathcal{Y}\), is different to \(\mathscr{L}((\mathbb{R}^d)^m,\mathcal{Y})\), the space of continuous linear maps from \((\mathbb{R}^d)^m\) into \(\mathcal{Y}\).}. The space \(\mathscr{L}_m(\mathbb{R}^d;\mathcal{Y})\) can then be equipped with a natural operator norm defined by
\[\left\lVert F\right\rVert_\text{op}=\sup_{\lVert x^{(1)}\rVert\leq1,...,\lVert x^{(m)}\rVert\leq1}\left\lVert F(x^{(1)},...,x^{(m)})\right\rVert_\mathcal{Y}.\]
For any integer \(m\), \citet[p.88, Theorem 4.4]{coleman2012calculus} tells us that \(\Psi_m:\mathscr{L}(\mathbb{R}^d,\mathscr{L}_{m-1}(\mathbb{R}^d;\mathcal{Y}))\rightarrow\mathscr{L}_m(\mathbb{R}^d;\mathcal{Y})\) defined by \(\Psi_m(F)(x^{(1)},x^{(2)},...,x^{(m)})=F(x^{(1)})(x^{(2)},...,x^{(m)})\) is an isometric isomorphism. 

We say that \(g:U\rightarrow\mathcal{Y}\) is \textit{twice differentiable at \(a\in U\)} if the derivative map \(g':U\rightarrow\mathscr{L}(\mathbb{R}^d,\mathcal{Y})\) is differentiable at \(a\). We denote by \(g''(a)=g^{(2)}(a)\in\mathscr{L}(\mathbb{R}^d,\mathscr{L}(\mathbb{R}^d,\mathcal{Y}))\simeq\mathscr{L}_2(\mathbb{R}^d;Y)\) the \textit{second derivative of \(g\) at \(a\)}. We say that \(g\) is \textit{twice differentiable on \(U\)} if it is twice differentiable at all points in \(U\). Then we have a map \(g^{(2)}:U\rightarrow\mathscr{L}_2(\mathbb{R}^d,\mathcal{Y})\). We say that \(g\) is \textit{twice continuously differentiable on \(U\)}, or \textit{of class \(C^2\) on \(U\)}, if \(g\) is twice differentiable and if the map \(g^{(2)}\) is continuous \citep[p.64]{cartan1967calcul}. By continuing in this way, we say that \(g\) is \textit{\(m\)-times differentiable at \(a\in U\)} if \(g^{(m-1)}:U\rightarrow\mathscr{L}_{m-1}(\mathbb{R}^d;\mathcal{Y})\) is differentiable at \(a\), define the \textit{\(m^\text{th}\) derivative} \(g^{(m)}(a)\in\mathscr{L}_m(\mathbb{R}^d;\mathcal{Y})\) of \(g\) at \(a\) as the derivative of \(g^{(m-1)}\) at \(a\), and say that \(g\) is \(m\)-times differentiable on \(U\) if it is \(m\)-times differentiable at all points in \(U\). We say that \(g\) is \textit{of class \(C^m\) on \(U\)} if \(g\) is \(m\)-times differentiable at all points in \(U\) and the map \(g^{(m)}:U\rightarrow\mathscr{L}_m(\mathbb{R}^d;\mathcal{Y})\) is continuous; we say that \(g\) is \textit{of class \(C^\infty\)} if it is of class \(C^m\) for all \(m\in\mathbb{N}\) \citep[pp.69--70]{cartan1967calcul}. 

Similarly, for \(l_1\in\{1,...,d\}\), if the partial derivative \(\partial_{l_1}g:U\rightarrow\mathcal{Y}\) is defined in some neighbourhood of \(x\in U\) and is differentiable, then for \(l_2\in\{1,...,d\}\) (which may or may not be distinct from \(l_1\)), we may define the second partial derivative \(\partial_{l_1}\partial_{l_2}g(a)\in\mathcal{Y}\). If \(l_1=l_2=l\), then we write \(\partial_l\partial_lg=\partial^2_lg\). Analogously to the first partial derivative, we have a formula that expresses the second derivative as a sum of second partial derivatives:
\[g''(a)((h^{(1)}_1,...,h^{(1)}_d),(h^{(2)}_1,...,h^{(2)}_d))=\sum_{l_1,l_2=1}^dh^{(1)}_{l_1}h^{(2)}_{l_2}\partial_{l_1}\partial_{l_2}g(a),\]
where \(h^{(1)}=(h^{(1)}_1,...,h^{(1)}_d),h^{(2)}=(h^{(2)}_1,...,h^{(2)}_d)\in\mathbb{R}^d\) \citep[p.68, (5.2.5)]{cartan1967calcul}. Continuing in the same way, we can define the \(m^\text{th}\) partial derivative \(\partial_{l_1}...\partial_{l_m}g(a)\in\mathcal{Y}\). Then writing \(\mathbf{h}=(h^{(1)},...,h^{(m)})\in(\mathbb{R}^d)^m\), we have
\[g^{(m)}(a)(\mathbf{h})=\sum_{l_1,...,l_m=1}^dh^{(1)}_{l_1}...h^{(m)}_{l_m}\partial_{l_1}...\partial_{l_m}g(a).\]
Finally, we state the extension of Taylor's theorem to functions with values in \(\mathcal{Y}\), with Lagrange's form of the remainder. To this end, for \(a,b\in\mathbb{R}^d\), define the \textit{segment} joining \(a\) and \(b\) as the set \citep[p.51]{coleman2012calculus}. 
\[[a,b]=\{x\in\mathbb{R}^d:x=va+(1-v)b,v\in[0,1]\}.\]
\begin{theorem}[{\citet[p.77, Th\'eor\`eme 5.6.2]{cartan1967calcul}}]
	Suppose that \(g:U\rightarrow\mathcal{Y}\) is \((m+1)\)-times differentiable, that the segment \([a,a+h]\) is contained in \(U\) and that, for some \(K>0\), we have
	\[\left\lVert g^{(m+1)}(x)\right\rVert_\textnormal{op}\leq K\qquad\text{for all }x\in U.\]
	Then
	\[\left\lVert g(a+h)-\sum^m_{k=0}\frac{1}{k!}g^{(k)}(a)((h)^k)\right\rVert_\mathcal{Y}\leq K\frac{\left\lVert h\right\rVert^{m+1}}{(m+1)!},\]
	where we wrote \((h)^k=(h,...,h)\in(\mathbb{R}^d)^k\) for \(k=1,...,m\).
\end{theorem}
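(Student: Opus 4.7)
The strategy is to reduce the problem to the one-dimensional case along the segment $[a,a+h]$, and then further reduce that to the scalar case via the Hahn-Banach theorem. Define $\phi:[0,1]\to\mathcal{Y}$ by $\phi(t)=g(a+th)$. Since $[a,a+h]\subset U$, $\phi$ is well-defined on $[0,1]$, and iterating the chain rule (which is part of the vector-valued differential calculus developed earlier in this appendix) gives that $\phi$ is $(m+1)$-times differentiable with
\[\phi^{(k)}(t)=g^{(k)}(a+th)((h)^k),\qquad k=0,1,\ldots,m+1.\]
In particular, $\phi(1)=g(a+h)$ and $\phi^{(k)}(0)=g^{(k)}(a)((h)^k)$, and by the definition of the operator norm on $\mathscr{L}_{m+1}(\mathbb{R}^d;\mathcal{Y})$ together with the hypothesis $\lVert g^{(m+1)}(x)\rVert_\text{op}\leq K$, we obtain the key bound $\lVert\phi^{(m+1)}(t)\rVert_\mathcal{Y}\leq K\lVert h\rVert^{m+1}$ for every $t\in[0,1]$. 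The desired inequality will then follow from the one-dimensional estimate
\[\Bigl\lVert\phi(1)-\sum_{k=0}^m\tfrac{1}{k!}\phi^{(k)}(0)\Bigr\rVert_\mathcal{Y}\leq\frac{K\lVert h\rVert^{m+1}}{(m+1)!}.\]

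To obtain this one-dimensional estimate, I would reduce to the scalar case using Hahn-Banach. For every continuous linear functional $y^*\in\mathcal{Y}^*$ with $\lVert y^*\rVert\leq 1$, the composition $y^*\circ\phi:[0,1]\to\mathbb{R}$ is $(m+1)$-times differentiable with $(y^*\circ\phi)^{(k)}=y^*\circ\phi^{(k)}$, so the classical real Taylor theorem with Lagrange remainder yields some $\xi=\xi(y^*)\in(0,1)$ with
\[y^*\Bigl(\phi(1)-\sum_{k=0}^m\tfrac{1}{k!}\phi^{(k)}(0)\Bigr)=\frac{1}{(m+1)!}\,y^*\bigl(\phi^{(m+1)}(\xi)\bigr),\]
whose absolute value is bounded by $K\lVert h\rVert^{m+1}/(m+1)!$. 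Taking the supremum over all such $y^*$ and invoking the Hahn-Banach consequence $\lVert v\rVert_\mathcal{Y}=\sup_{\lVert y^*\rVert\leq 1}\lvert y^*(v)\rvert$ delivers the required bound.

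The main obstacle I anticipate is verifying the iterated chain rule $\phi^{(k)}(t)=g^{(k)}(a+th)((h)^k)$ cleanly, since it requires tracking the canonical isomorphism $\mathscr{L}(\mathbb{R},\mathscr{L}_{k-1}(\mathbb{R}^d;\mathcal{Y}))\simeq\mathscr{L}_k(\mathbb{R}^d;\mathcal{Y})$ at each order; this is best handled by induction on $k$, using at the inductive step that $\frac{d}{dt}[g^{(k-1)}(a+th)((h)^{k-1})]$ equals $g^{(k)}(a+th)(h,(h)^{k-1})$ under the identification. A self-contained alternative that avoids Hahn-Banach is an induction on $m$ based directly on the vector-valued mean value inequality: the case $m=0$ is that inequality applied to $g$, and for the inductive step one applies the inequality to the remainder $r_m(x)=g(x)-\sum_{k=0}^m\frac{1}{k!}g^{(k)}(a)((x-a)^k)$ whose derivative is a remainder of the same form for $g'$ but one order lower, so the inductive hypothesis furnishes the required pointwise bound on $\lVert r_m'\rVert$ before a final integration.
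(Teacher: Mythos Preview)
The paper does not actually prove this theorem; in both the preliminaries and Appendix~\ref{Sdiffcalcfull} it is merely stated as a citation from \citet{cartan1967calcul} with no accompanying argument. So there is no proof in the paper to compare against.

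Your argument is correct. The reduction $\phi(t)=g(a+th)$ followed by the Hahn--Banach step is a valid route: the dependence of $\xi$ on $y^*$ is harmless because the bound $\lvert y^*(\phi^{(m+1)}(\xi))\rvert\leq K\lVert h\rVert^{m+1}$ holds uniformly in $y^*$ with $\lVert y^*\rVert\leq1$, so the supremum goes through. The iterated chain rule $\phi^{(k)}(t)=g^{(k)}(a+th)((h)^k)$ is indeed the only place requiring care, and your proposed induction on $k$ through the isomorphism $\mathscr{L}(\mathbb{R},\mathscr{L}_{k-1}(\mathbb{R}^d;\mathcal{Y}))\simeq\mathscr{L}_k(\mathbb{R}^d;\mathcal{Y})$ is the standard way to handle it.

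For what it is worth, the inductive alternative you sketch at the end---applying the vector-valued mean value inequality to the remainder $r_m$ and using that $r_m'$ is a remainder of one lower order for $g'$---is precisely the proof Cartan gives in the cited reference. Both routes work in any Banach space; the Hahn--Banach version outsources the inductive content to the classical scalar Taylor theorem, while Cartan's argument stays entirely within the vector-valued calculus and avoids duality.
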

Write \(\mathbb{N}_0=\{0,1,2,...\}\), and for \(p=(p_1,...,p_d)\in\mathbb{N}^d_0\), write \([p]\vcentcolon=p_1+...+p_d\). Then we denote the \(p^\text{th}\) partial derivative \(\partial^{p_1}_1...\partial^{p_d}_dg(a)\) of \(g\) at \(a\in U\) as \(D^pg(a)\in\mathcal{Y}\). This is possible since the order of partial differentiation is immaterial by repeated application of \citet[p.69, Proposition 5.2.2]{cartan1967calcul}. Hence, for each \(k=1,...,m+1\), we have
\[g^{(k)}(a)((h)^k)=\sum^d_{l_1,...,l_k=1}h_{l_1}...h_{l_k}\partial_{l_1}...\partial_{l_k}g(a)=\sum_{[p]=k}\frac{k!h^p}{p!}D^pg(a),\]
where we wrote \(h^p\) as a shorthand for \(h_1^{p_1}...h_d^{p_d}\) and \(p!\) for \(p_1!...p_d!\). Hence, using partial derivatives, we can express Taylor's theorem above as
\[\left\lVert g(a+h)-\sum_{[p]\leq m}\frac{h^p}{p!}D^pg(a)\right\rVert_\mathcal{Y}\leq K\frac{\left\lVert h\right\rVert^{m+1}}{(m+1)!}.\]
\section{Empirical Process Theory with Vector-Valued Functions}\label{Sempiricalprocessesfull}
In this Section, we state and prove some basic empirical process-theoretic results, adapted to our setting of vector-valued functions. Although technically new, the ideas and proofs carry over from the real case with ease, by applying vector-valued concentration inequalities from Section \ref{SconcentrationHilbfull}. 

\subsection{Symmetrisation}\label{SSsymmetrisation}
Symmetrisation is an indispensable technique in empirical process theory. Let \(X'_1,...,X'_n\) be another set of independent copies of \(X\), independent of \(X_1,...,X_n\). Denote by \(P'_n\) the empirical measure on \(X'_1,...,X'_n\), i.e. \(P'_n=\frac{1}{n}\sum^n_{i=1}\delta_{X'_i}\). 
\begin{lemma}\label{Lsymmetrisationcopies}
	We have
	\[\mathbb{E}\left[\left\lVert P_n-P\right\rVert_\mathcal{G}\right]\leq\mathbb{E}\left[\left\lVert P_n-P'_n\right\rVert_\mathcal{G}\right].\]
\end{lemma}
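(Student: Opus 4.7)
The plan is to carry over the standard ``ghost sample'' symmetrisation argument from the real-valued case, with the scalar absolute value replaced by the Hilbert norm and ordinary Jensen's inequality replaced by its Bochner-integral version.

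First I would exploit independence: since $X'_1,\ldots,X'_n$ are i.i.d.\ copies of $X$, independent of $X_1,\ldots,X_n$, and since the Bochner integral commutes with linear operations, for each $g \in \mathcal{G}$ we have $\mathbb{E}[P'_n g \mid X_1,\ldots,X_n] = \mathbb{E}[P'_n g] = Pg$. Hence
\[
P_n g - Pg \;=\; \mathbb{E}\bigl[(P_n - P'_n)g \,\big|\, X_1,\ldots,X_n\bigr]
\]
as an identity in $\mathcal{Y}$.

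Next I would apply Jensen's inequality for Bochner integrals to the convex map $y \mapsto \lVert y\rVert_\mathcal{Y}$, which yields, pointwise in $g$,
\[
\left\lVert P_n g - Pg\right\rVert_\mathcal{Y} \;\leq\; \mathbb{E}\bigl[\left\lVert (P_n - P'_n)g\right\rVert_\mathcal{Y} \,\big|\, X_1,\ldots,X_n\bigr].
\]
Taking the supremum over $g \in \mathcal{G}$ on both sides, and then pushing the supremum inside the conditional expectation (which is justified because $\sup_g \mathbb{E}[\,\cdot\,] \le \mathbb{E}[\sup_g \,\cdot\,]$), gives
\[
\left\lVert P_n - P\right\rVert_\mathcal{G} \;\leq\; \mathbb{E}\bigl[\left\lVert P_n - P'_n\right\rVert_\mathcal{G} \,\big|\, X_1,\ldots,X_n\bigr].
\]
Taking outer expectations and using the tower property finishes the proof.

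The only subtle points, rather than obstacles, are (i) verifying that Jensen's inequality in its Bochner form does apply here, which is standard once one knows that $Pg, P_n g \in \mathcal{Y}$ are genuine Bochner integrals (as established in Section~\ref{Sempiricalprocesstheory}); and (ii) the measurability of the supremum over the possibly uncountable class $\mathcal{G}$, which, as noted in the text just before Section~\ref{Ssmooth}, is being deliberately set aside in this paper (or else handled by outer expectations in the sense of \citet{vandervaart1996weak}). No vector-valued concentration machinery is needed for this step; the argument is purely a Jensen-plus-tower computation.
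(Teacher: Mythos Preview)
Your proof is correct and follows essentially the same argument as the paper's own proof: write $(P_n-P)g$ as a conditional expectation of $(P_n-P'_n)g$ given the original sample, apply Jensen's inequality to $\lVert\cdot\rVert_\mathcal{Y}$, push the supremum inside the conditional expectation, and use the tower property. The only cosmetic difference is that the paper conditions on the $\sigma$-algebra $\mathcal{F}_n=\sigma(X_1,\ldots,X_n)$ rather than on $X_1,\ldots,X_n$ directly.
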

\begin{proof}
	Denote by \(\mathcal{F}_n\) the \(\sigma\)-algebra generated by \(X_1,...,X_n\). Then for each \(g\in\mathcal{G}\), we have
	\[\mathbb{E}\left[P_ng\mid\mathcal{F}_n\right]=P_ng\qquad\text{and}\qquad\mathbb{E}\left[P'_ng\mid\mathcal{F}_n\right]=Pg,\]
	and so
	\[(P_n-P)g=\mathbb{E}\left[\left(P_n-P'_n\right)g\mid\mathcal{F}_n\right].\]
	Now see that
	\begin{alignat*}{3}
		\left\lVert P_n-P\right\rVert_\mathcal{G}&=\sup_{g\in\mathcal{G}}\left\lVert\mathbb{E}\left[\left(P_n-P'_n\right)g\mid\mathcal{F}_n\right]\right\rVert_\mathcal{Y}\\
		&\leq\sup_{g\in\mathcal{G}}\mathbb{E}\left[\left\lVert\left(P_n-P'_n\right)g\right\rVert_\mathcal{Y}\mid\mathcal{F}_n\right]&&\text{by Jensen's inequality}\\
		&\leq\mathbb{E}\left[\sup_{g\in\mathcal{G}}\left\lVert\left(P_n-P'_n\right)g\right\rVert_\mathcal{Y}\mid\mathcal{F}_n\right].
	\end{alignat*}
	Now take expectations on both sides and apply the law of iterated expectations arrive at the result. 
\end{proof}
We let \(\{\sigma_i\}_{i=1}^n\) be a \textit{Rademacher sequence}, i.e. a sequence of independent random variables \(\sigma_i\) with
\[\mathbb{P}\left(\sigma_i=1\right)=\mathbb{P}\left(\sigma_i=-1\right)=\frac{1}{2},\qquad\text{for all }i=1,...,n.\]
We define the symmetrised empirical measures \(P^\sigma_n=\frac{1}{n}\sum^n_{i=1}\sigma_i\delta_{X_i}\) and \(P'^\sigma_n=\frac{1}{n}\sum^n_{i=1}\sigma_i\delta_{X'_i}\), and denote
\[P^\sigma_ng=\frac{1}{n}\sum^n_{i=1}\sigma_ig(X_i)\qquad\text{and}\qquad P'^\sigma_ng=\frac{1}{n}\sum^n_{i=1}\sigma_ig(X'_i).\]
\begin{lemma}[Symmetrisation with means]\label{Lsymmetrisationmean}
	We have
	\[\mathbb{E}\left[\left\lVert P_n-P\right\rVert_\mathcal{G}\right]\leq2\mathbb{E}\left[\left\lVert P^\sigma_n\right\rVert_\mathcal{G}\right]\]
\end{lemma}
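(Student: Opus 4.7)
The plan is to combine Lemma \ref{Lsymmetrisationcopies} with a Rademacher symmetrisation trick that exploits the i.i.d.\ structure, and then conclude via the triangle inequality. The strategy is essentially identical to the real-valued case; the only thing to check is that nothing we do depends on $\mathcal{Y} = \mathbb{R}$, which is true because every quantity on which we act is measured by $\lVert\cdot\rVert_\mathcal{Y}$ and we never need order properties of the output space.

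First, I would invoke Lemma \ref{Lsymmetrisationcopies} to reduce the problem to bounding $\mathbb{E}[\lVert P_n - P'_n\rVert_\mathcal{G}]$. Writing this out,
\[
P_n g - P'_n g = \frac{1}{n}\sum_{i=1}^n \bigl(g(X_i) - g(X'_i)\bigr),
\]
and because $X_i$ and $X'_i$ are i.i.d., the $\mathcal{Y}$-valued random variable $g(X_i) - g(X'_i)$ has a symmetric distribution, i.e.\ it is equal in distribution to $-\bigl(g(X_i) - g(X'_i)\bigr)$. Consequently, if we let $\{\sigma_i\}_{i=1}^n$ be an independent Rademacher sequence, the vector $\bigl(g(X_i) - g(X'_i)\bigr)_{i=1}^n$ has the same joint distribution as $\bigl(\sigma_i(g(X_i) - g(X'_i))\bigr)_{i=1}^n$ (jointly over $g \in \mathcal{G}$, since one applies the sign $\sigma_i$ on the level of the pair $(X_i, X'_i)$ before evaluating any function). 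Taking suprema in $g$ preserves this equality, so
\[
\mathbb{E}\bigl[\lVert P_n - P'_n\rVert_\mathcal{G}\bigr] = \mathbb{E}\Bigl[\sup_{g \in \mathcal{G}} \Bigl\lVert \tfrac{1}{n}\sum_{i=1}^n \sigma_i\bigl(g(X_i) - g(X'_i)\bigr)\Bigr\rVert_\mathcal{Y}\Bigr].
\]

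Next, I would split the sum and apply the triangle inequality inside the $\mathcal{Y}$-norm, followed by subadditivity of the supremum and linearity of expectation, to obtain
\[
\mathbb{E}\bigl[\lVert P_n - P'_n\rVert_\mathcal{G}\bigr] \leq \mathbb{E}\bigl[\lVert P^\sigma_n\rVert_\mathcal{G}\bigr] + \mathbb{E}\bigl[\lVert P'^\sigma_n\rVert_\mathcal{G}\bigr].
\]
Finally, since $(X_1,\dots,X_n)$ and $(X'_1,\dots,X'_n)$ have the same joint distribution and $\{\sigma_i\}$ is independent of both, the two terms on the right are equal, yielding the claimed factor of $2\mathbb{E}[\lVert P^\sigma_n\rVert_\mathcal{G}]$.

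The only subtlety worth flagging is the joint-in-$g$ distributional identity in the symmetrisation step: one wants to know that inserting the $\sigma_i$ does not change the law of the $\mathcal{Y}$-valued process $\{g \mapsto \frac{1}{n}\sum_i (g(X_i)-g(X'_i))\}_{g \in \mathcal{G}}$, not merely its one-dimensional marginals. This holds because the symmetry is applied at the level of the raw sample pair $(X_i, X'_i)$ before any function is evaluated, so exchanging $X_i$ and $X'_i$ is a measure-preserving map on the underlying sample space that simultaneously negates every $g(X_i) - g(X'_i)$ for all $g$. Measurability of the supremum is set aside under the conventions stated at the end of Section \ref{Sempiricalprocesstheory}.
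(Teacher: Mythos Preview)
Your proof is correct and follows essentially the same route as the paper: apply Lemma \ref{Lsymmetrisationcopies}, use the exchangeability of $(X_i,X'_i)$ to insert the Rademacher signs, then split via the triangle inequality and use equality in distribution of $P^\sigma_n$ and $P'^\sigma_n$. If anything, your justification of the joint-in-$g$ distributional identity is more careful than the paper's, which simply asserts it.
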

\begin{proof}
	Note that \(\left\lVert P_n-P'_n\right\rVert_\mathcal{G}\) has the same distribution as \(\left\lVert P^\sigma_n-P'^\sigma_n\right\rVert_\mathcal{G}\), since, for each \(i=1,...,n\) and \(g\in\mathcal{G}\). \(g(X_i)-g(X'_i)\) and \(\sigma_i\left(g(X_i)-g(X'_i)\right)\) have the same distribution. Hence, the triangle inequality gives us
	\begin{alignat*}{2}
		\mathbb{E}\left[\left\lVert P_n-P'_n\right\rVert_\mathcal{G}\right]=\mathbb{E}\left[\left\lVert P^\sigma_n-P'^\sigma_n\right\rVert_\mathcal{G}\right]\leq\mathbb{E}\left[\left\lVert P^\sigma_n\right\rVert_\mathcal{G}+\left\lVert P'^\sigma_n\right\rVert_\mathcal{G}\right]=2\mathbb{E}\left[\left\lVert P^\sigma_n\right\rVert_\mathcal{G}\right].
	\end{alignat*}
	Now apply Lemma \ref{Lsymmetrisationcopies}. 
\end{proof}
\begin{lemma}[Symmetrisation with probabilities]\label{Lsymmetrisationprobability}
	Let \(a>0\). Suppose that for all \(g\in\mathcal{G}\), 
	\[\mathbb{P}\left(\left\lVert\left(P_n-P\right)g\right\rVert_\mathcal{Y}>\frac{a}{2}\right)\leq\frac{1}{2}.\]
	Then
	\[\mathbb{P}\left(\left\lVert P_n-P\right\rVert_\mathcal{G}>a\right)\leq4\mathbb{P}\left(\left\lVert P^\sigma_n\right\rVert_\mathcal{G}>\frac{a}{4}\right).\]
\end{lemma}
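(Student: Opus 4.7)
The plan is to follow the classical symmetrisation-by-probability argument from the real-valued case (e.g.\ \citet[p.14, Lemma 2.4]{vandegeer2000empirical}), noting that every inequality transfers verbatim to the Hilbert-space setting because we only invoke the triangle inequality for $\lVert\cdot\rVert_\mathcal{Y}$, independence of the copies $X_i'$, and the hypothesis.

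First I would introduce independent copies $X_1',\dots,X_n'$ with empirical measure $P_n'$ and condition on $\mathcal{F}_n=\sigma(X_1,\dots,X_n)$. On the event $A\vcentcolon=\{\lVert P_n-P\rVert_\mathcal{G}>a\}$ I select (assuming measurability, as flagged in Section \ref{Sempiricalprocesstheory}) a function $\hat{g}=\hat{g}(X_1,\dots,X_n)\in\mathcal{G}$ with $\lVert(P_n-P)\hat{g}\rVert_\mathcal{Y}>a$. By the triangle inequality in $\mathcal{Y}$,
\[
\lVert(P_n-P_n')\hat{g}\rVert_\mathcal{Y}\ \geq\ \lVert(P_n-P)\hat{g}\rVert_\mathcal{Y}-\lVert(P_n'-P)\hat{g}\rVert_\mathcal{Y}\ >\ a-\lVert(P_n'-P)\hat{g}\rVert_\mathcal{Y},
\]
so on $A\cap\{\lVert(P_n'-P)\hat{g}\rVert_\mathcal{Y}\leq a/2\}$ we have $\lVert P_n-P_n'\rVert_\mathcal{G}>a/2$. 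The hypothesis, applied to the $\mathcal{F}_n$-measurable choice $\hat{g}$ and using that $X_1',\dots,X_n'$ are independent of $\mathcal{F}_n$, yields $\mathbb{P}(\lVert(P_n'-P)\hat{g}\rVert_\mathcal{Y}\leq a/2\mid\mathcal{F}_n)\geq 1/2$ on $A$. Taking expectations gives
\[
\mathbb{P}\bigl(\lVert P_n-P_n'\rVert_\mathcal{G}>a/2\bigr)\ \geq\ \mathbb{E}\bigl[\mathbf{1}_A\,\mathbb{P}(\lVert(P_n'-P)\hat{g}\rVert_\mathcal{Y}\leq a/2\mid\mathcal{F}_n)\bigr]\ \geq\ \tfrac12\,\mathbb{P}(A).
\]

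Next I insert Rademacher signs. Because $(X_i,X_i')$ are i.i.d.\ and $g(X_i)-g(X_i')$ is symmetric in distribution, $\lVert P_n-P_n'\rVert_\mathcal{G}$ has the same distribution as $\lVert P_n^\sigma-P_n'^\sigma\rVert_\mathcal{G}$. Applying the Hilbert-space triangle inequality followed by a union bound,
\[
\mathbb{P}\bigl(\lVert P_n-P_n'\rVert_\mathcal{G}>a/2\bigr)=\mathbb{P}\bigl(\lVert P_n^\sigma-P_n'^\sigma\rVert_\mathcal{G}>a/2\bigr)\leq\mathbb{P}\bigl(\lVert P_n^\sigma\rVert_\mathcal{G}>a/4\bigr)+\mathbb{P}\bigl(\lVert P_n'^\sigma\rVert_\mathcal{G}>a/4\bigr),
\]
and the two summands are equal by identical distribution. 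Combining the two displays yields $\mathbb{P}(A)\leq 4\,\mathbb{P}(\lVert P_n^\sigma\rVert_\mathcal{G}>a/4)$.

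The only genuine subtlety, and the one place where care is needed, is the measurable selection of $\hat{g}$; this is exactly the measurability issue flagged in Section \ref{Sempiricalprocesstheory}, and we handle it by the blanket assumption stated there (or by passing to outer probabilities as in \citet{vandervaart1996weak}). Beyond that, no vector-valued concentration result is required for this lemma: the triangle inequality in $\mathcal{Y}$ suffices, which is why the real-valued proof carries over almost verbatim.
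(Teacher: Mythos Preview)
Your proof is correct and follows essentially the same approach as the paper: select a near-maximizer $\hat{g}$ (the paper's $g_*$) on the event $\{\lVert P_n-P\rVert_\mathcal{G}>a\}$, use the hypothesis conditionally on $\mathcal{F}_n$ with the independent copy $P_n'$ to obtain $\mathbb{P}(\lVert P_n-P\rVert_\mathcal{G}>a)\leq 2\mathbb{P}(\lVert P_n-P_n'\rVert_\mathcal{G}>a/2)$, and then symmetrise via the Rademacher signs and a union bound. Your treatment of the measurability caveat matches the paper's blanket assumption.
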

\begin{proof}
	Denote again by \(\mathcal{F}_n\) the \(\sigma\)-algebra generated by \(X_1,...,X_n\). If \(\left\lVert P_n-P\right\rVert_\mathcal{G}>a\), then we know that for some random function \(g_*\) depending on \(X_1,...,X_n\), \(\left\lVert\left(P_n-P\right)g_*\right\rVert_\mathcal{Y}>a\). Because \(X'_1,...,X'_n\) are independent of \(\mathcal{F}_n\), 
	\[\mathbb{P}\left(\left\lVert\left(P'_n-P\right)g_*\right\rVert_\mathcal{Y}>\frac{a}{2}\mid\mathcal{F}_n\right)=\mathbb{P}\left(\left\lVert\left(P_n-P\right)g_*\right\rVert_\mathcal{Y}>\frac{a}{2}\right)\leq\frac{1}{2}.\tag{*}\]
	Then see that, 
	\begin{alignat*}{3}
		\mathbb{P}\left(\left\lVert P_n-P\right\rVert_\mathcal{G}>a\right)&\leq\mathbb{P}\left(\left\lVert\left(P_n-P\right)g_*\right\rVert_\mathcal{Y}>a\right)\\
		&=\mathbb{E}\left[\mathbf{1}\left\{\left\lVert\left(P_n-P\right)g_*\right\rVert_\mathcal{Y}>a\right\}\right]\\
		&\leq2\mathbb{E}\left[\mathbb{P}\left(\left\lVert\left(P'_n-P\right)g_*\right\rVert_\mathcal{Y}\leq\frac{a}{2}\mid\mathcal{F}_n\right)\mathbf{1}\left\{\left\lVert\left(P_n-P\right)g_*\right\rVert_\mathcal{Y}>a\right\}\right]&&\text{by (*)}\\
		&=2\mathbb{E}\left[\mathbb{P}\left(\left\lVert\left(P'_n-P\right)g_*\right\rVert_\mathcal{Y}\leq\frac{a}{2}\text{ and }\left\lVert\left(P_n-P\right)g_*\right\rVert_\mathcal{Y}>a\mid\mathcal{F}_n\right)\right]\\
		&=2\mathbb{P}\left(\left\lVert\left(P'_n-P\right)g_*\right\rVert_\mathcal{Y}\leq\frac{a}{2}\text{ and }\left\lVert\left(P_n-P\right)g_*\right\rVert_\mathcal{Y}>a\right).
	\end{alignat*}
	But if the two inequalities in the probability on the last line hold, then the reverse triangle inequality gives us
	\[\frac{a}{2}<\left\lVert\left(P_n-P\right)g_*\right\rVert_\mathcal{Y}-\left\lVert\left(P'_n-P\right)g_*\right\rVert_\mathcal{Y}\leq\left\lVert\left(P_n-P'_n\right)g_*\right\rVert_\mathcal{Y},\]
	so
	\begin{alignat*}{2}
		\mathbb{P}\left(\left\lVert P_n-P\right\rVert_\mathcal{G}>a\right)&\leq2\mathbb{P}\left(\left\lVert\left(P_n-P'_n\right)g_*\right\rVert_\mathcal{Y}>\frac{a}{2}\right)\\
		&\leq2\mathbb{P}\left(\left\lVert P_n-P'_n\right\rVert_\mathcal{G}>\frac{a}{2}\right)\\
		&=2\mathbb{P}\left(\left\lVert P^\sigma_n-P'^\sigma_n\right\rVert_\mathcal{G}>\frac{a}{2}\right)\\
		&\leq2\mathbb{P}\left(\left\lVert P^\sigma_n\right\rVert_\mathcal{G}>\frac{a}{4}\text{ or }\left\lVert P'^\sigma_n\right\rVert_\mathcal{G}>\frac{a}{4}\right)\\
		&\leq4\mathbb{P}\left(\left\lVert P^\sigma_n\right\rVert_\mathcal{G}>\frac{a}{4}\right).
	\end{alignat*}
\end{proof}
A simple application of the above symmetrisation argument and Hoeffding's inequality in Hilbert spaces (Proposition \ref{Phoeffdinghilbert}) shows that finite function classes are Glivenko-Cantelli.
\begin{lemma}\label{Lfiniteclass}
	Let \(\mathcal{G}=\left\{g_1,...,g_N\right\}\in L^1(\mathcal{X},P;\mathcal{Y})\) be a finite class of functions with cardinality \(N>1\). Then we have
	\[\left\lVert P_n-P\right\rVert_\mathcal{G}\rightarrow0.\]
\end{lemma}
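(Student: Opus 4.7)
The plan is to follow the hint preceding the lemma: apply the symmetrisation identity of Lemma~\ref{Lsymmetrisationmean} to reduce to a Rademacher-weighted sum, and then invoke the Hilbert-space Hoeffding bound of Proposition~\ref{Phoeffdinghilbert}. The only subtle point is that $g_1,\dots,g_N$ are only assumed $P$-integrable, so Hoeffding cannot be applied to them directly; I therefore truncate first and treat the tail by a scalar law of large numbers. (Alternatively one could simply invoke Mourier's strong law once per $j$ and take a maximum over the $N$ indices, but the argument sketched here uses only the tools the section has just developed.)

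\emph{Truncation step.} Fix $\epsilon>0$. Since each $\|g_j\|_\mathcal{Y}$ is $P$-integrable and the class is finite, dominated convergence supplies $M>0$ with $\max_{1\le j\le N} P\!\left(\|g_j\|_\mathcal{Y}\mathbf{1}_{\{\|g_j\|_\mathcal{Y}>M\}}\right)<\epsilon$. Decompose $g_j=g_j^{\le M}+g_j^{>M}$ pointwise, where $g_j^{\le M}(x)=g_j(x)\mathbf{1}_{\{\|g_j(x)\|_\mathcal{Y}\le M\}}$, so $\|g_j^{\le M}\|_\infty\le M$. Then
\[\|P_n-P\|_\mathcal{G}\le\max_j\|(P_n-P)g_j^{\le M}\|_\mathcal{Y}+\max_j P_n\|g_j^{>M}\|_\mathcal{Y}+\max_j P\|g_j^{>M}\|_\mathcal{Y}.\]
The last term is $<\epsilon$ by choice of $M$, and for each fixed $j$ the middle summand is a scalar empirical mean of the integrable real variable $\|g_j^{>M}(X)\|_\mathcal{Y}$; by the classical real-valued law of large numbers together with a union bound over the $N$ indices, the middle term is $<2\epsilon$ with probability tending to one.

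\emph{Bounded part.} For the first summand, symmetrisation (Lemma~\ref{Lsymmetrisationmean}) applied to the truncated class gives $\mathbb{E}\max_j\|(P_n-P)g_j^{\le M}\|_\mathcal{Y}\le 2\,\mathbb{E}\max_j\|P_n^\sigma g_j^{\le M}\|_\mathcal{Y}$. Conditionally on $X_1,\dots,X_n$ the summands $\sigma_i g_j^{\le M}(X_i)$ are independent, mean zero, and bounded in $\mathcal{Y}$-norm by $M$, so Proposition~\ref{Phoeffdinghilbert} applied conditionally, combined with a union bound over $j$, yields
\[\mathbb{P}\!\left(\max_j\|P_n^\sigma g_j^{\le M}\|_\mathcal{Y}\ge\epsilon\mid X_1,\dots,X_n\right)\le 2N\exp\!\bigl(-n\epsilon^2/(4M^2)\bigr),\]
a deterministic bound tending to zero. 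Since $\max_j\|P_n^\sigma g_j^{\le M}\|_\mathcal{Y}\le M$ almost surely, the split $\mathbb{E}[Z]\le\epsilon+M\,\mathbb{P}(Z>\epsilon)$ transfers this to $\mathbb{E}\max_j\|(P_n-P)g_j^{\le M}\|_\mathcal{Y}\to 0$, hence (Markov) to convergence in probability. Combining with the truncation estimate gives $\|P_n-P\|_\mathcal{G}<4\epsilon$ with probability tending to one, and since $\epsilon$ is arbitrary the conclusion follows. The only mild obstacle is the absence of a uniform bound on $\|g_j\|_\mathcal{Y}$; truncation at a level $M$ depending on $\epsilon$ but not on $n$ resolves it cleanly.
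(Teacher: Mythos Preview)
Your proof is correct and follows essentially the same strategy as the paper's: truncate, apply conditional Hoeffding in $\mathcal{Y}$ (Proposition~\ref{Phoeffdinghilbert}) with a union bound over the $N$ functions, and symmetrise---the paper uses the common envelope $G=\max_j\lVert g_j\rVert_\mathcal{Y}$ for truncation and Lemma~\ref{Lsymmetrisationprobability} (symmetrisation with probabilities, checked via Chebyshev) rather than your individual truncation and Lemma~\ref{Lsymmetrisationmean}, but these are cosmetic variations. One small point: in the split $\mathbb{E}[Z]\le\epsilon+M\,\mathbb{P}(Z>\epsilon)$ you should use a fresh threshold $\eta$, independent of your truncation level $M=M(\epsilon)$, so that sending $\eta\to 0$ after $n\to\infty$ legitimately yields $\mathbb{E}[Z]\to 0$; as written, reusing the outer $\epsilon$ only gives $\limsup_n\mathbb{E}[Z]\le\epsilon$, though the final conclusion still follows.
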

\begin{proof}
	Take any \(K>0\). Define the function \(G:\mathcal{X}\rightarrow\mathbb{R}\) by \(G(x)=\max_{1\leq j\leq N}\left\lVert g_j(x)\right\rVert_\mathcal{Y}\). Since each \(\left\lVert g_j\right\rVert_\mathcal{Y}\) is integrable, and we have a finite collection, \(G\) is also integrable. Then, for each \(j=1,...,N\), define the function \(\tilde{g}_j:\mathcal{X}\rightarrow\mathcal{Y}\) by \(\tilde{g}_j=g_j\mathbf{1}\left\{G\leq K\right\}\). Then for all \(i=1,...,n\), letting \(\sigma_i\) be independent Rademacher variables again, we have
	\[\mathbb{E}\left[\sigma_i\tilde{g}_j(X_i)\right]=0\qquad\text{and}\qquad\left\lVert\sigma_i\tilde{g}_j(X_i)\right\rVert_\mathcal{Y}\leq K\text{ almost surely.}\]
	Hence, for each \(j=1,...,N\), by Hoeffding's inequality (Proposition \ref{Phoeffdinghilbert}), for any \(t>0\), we have
	\[\mathbb{P}\left(\left\lVert P^\sigma_n\tilde{g}_j\right\rVert_\mathcal{Y}\geq2K\sqrt{\frac{t}{n}}\right)=\mathbb{P}\left(\left\lVert\sum^n_{i=1}\sigma_i\tilde{g}_j(X_i)\right\rVert_\mathcal{Y}\geq2K\sqrt{nt}\right)\leq2e^{-t}.\]
	By the union bound, for any \(t>0\), we have
	\begin{alignat*}{2}
		\mathbb{P}\left(\max_{1\leq j\leq N}\left\lVert P^\sigma_n\tilde{g}_j\right\rVert_\mathcal{Y}\geq2K\sqrt{\frac{t+\log N}{n}}\right)&\leq N\max_{1\leq j\leq N}\mathbb{P}\left(\left\lVert P^\sigma_n\tilde{g}_j\right\rVert_\mathcal{Y}\geq2K\sqrt{\frac{t+\log N}{n}}\right)\\
		&\leq2e^{-t}.
	\end{alignat*}
	Now see that, for each \(j=1,...,N\), Chebyshev's inequality gives
	\[\mathbb{P}\left(\left\lVert\left(P_n-P\right)\tilde{g}_j\right\rVert_\mathcal{Y}>4K\sqrt{\frac{t+\log N}{n}}\right)\leq\frac{n\mathbb{E}\left[\left\lVert\left(P_n-P\right)\tilde{g}_j\right\rVert_\mathcal{Y}^2\right]}{16K^2\left(t+\log N\right)}\leq\frac{1}{16\left(t+\log N\right)}\leq\frac{1}{2},\]
	where the last inequality follows since \(8t+8\log N\geq8\log2\geq1\). Now apply Lemma \ref{Lsymmetrisationprobability} to see that
	\begin{alignat*}{2}
		\mathbb{P}\left(\max_{1\leq j\leq N}\left\lVert\left(P_n-P\right)\tilde{g}_j\right\rVert_\mathcal{Y}>8K\sqrt{\frac{t+\log N}{n}}\right)&\leq4\mathbb{P}\left(\max_{1\leq j\leq N}\left\lVert P^\sigma_n\tilde{g}_j\right\rVert_\mathcal{Y}>2K\sqrt{\frac{t+\log N}{n}}\right)\\
		&\leq8e^{-t}.
	\end{alignat*}
	This tells us that
	\[\max_{1\leq j\leq N}\left\lVert\left(P_n-P\right)\tilde{g}_j\right\rVert_\mathcal{Y}\stackrel{P}{\rightarrow}0.\]
	Finally, see that
	\[\left\lVert P_n-P\right\rVert_\mathcal{G}\leq\max_{1\leq j\leq N}\left\lVert\left(P_n-P\right)\tilde{g}_j\right\rVert_\mathcal{Y}+\max_{1\leq j\leq N}\left\lVert\left(P_n-P\right)g_j\mathbf{1}\left\{G>K\right\}\right\rVert_\mathcal{Y}.\]
	Here, the first term converges to 0 in probability for any \(K>0\), as shown above, and the second term decomposes as 
	\begin{alignat*}{2}
		\max_{1\leq j\leq N}\left\lVert\left(P_n-P\right)g_j\mathbf{1}\left\{G>K\right\}\right\rVert_\mathcal{Y}&\leq\left(P_n+P\right)G\mathbf{1}\left\{G>K\right\}\\
		&=\left(P_n-P\right)G\mathbf{1}\left\{G>K\right\}+2PG\mathbf{1}\left\{G>K\right\}\\
		&\leq\left(P_n-P\right)G+2PG\mathbf{1}\left\{G>K\right\}.
	\end{alignat*}
	Here, the first term converges to 0 in probability by the weak law of large numbers, and the second term converges to 0 as \(K\rightarrow\infty\), by \citet[p.71, Lemma 3.10]{cinlar2011probability}. 
\end{proof}

\subsection{Uniform law of large numbers}\label{SSulln}
We start with a definition. 
\begin{definition}[Adapted from {\citet[p.26, Definition 3.1]{vandegeer2000empirical}}]\label{Denvelope}
	The function \(G:\mathcal{X}\rightarrow\mathbb{R}\) defined by \(G(\cdot)=\sup_{g\in\mathcal{G}}\left\lVert g(\cdot)\right\rVert_\mathcal{Y}\) is called the \textit{envelope} of \(\mathcal{G}\). 
\end{definition}
The following is a uniform law of large numbers based on conditions on the entropy \(H(\delta,\mathcal{G},\lVert\cdot\rVert_{1,P_n})\) and the envelope \(G\). 
\begin{theorem}\label{Tglivenkocantelli}
	Suppose that
	\[G\in L^1(\mathcal{X},P;\mathbb{R})\qquad\text{and}\qquad\frac{1}{n}H(\delta,\mathcal{G},\lVert\cdot\rVert_{1,P_n})\stackrel{P}{\rightarrow}0\enspace\text{for each }\delta>0.\]
	Then \(\mathcal{G}\) is a Glivenko Cantelli class, i.e. \(\left\lVert P_n-P\right\rVert_\mathcal{G}\stackrel{P}{\rightarrow}0\). 
\end{theorem}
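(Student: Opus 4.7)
The plan is to mirror the classical Glivenko--Cantelli proof (see, e.g.,\ \citet[p.26, Theorem 3.7]{vandegeer2000empirical}), adapting each step by swapping in the vector-valued Hoeffding inequality of Proposition \ref{Phoeffdinghilbert} for its scalar counterpart and leaning on the symmetrisation lemmas of Appendix \ref{SSsymmetrisation}. The proof of Lemma \ref{Lfiniteclass} already contains the full pipeline (truncate, symmetrise, union-bound with Hoeffding) for finite classes, so the task is essentially to insert a $\delta$-net reduction between the symmetrisation and the union bound so that the entropy hypothesis can be exploited.

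\textbf{Truncation.} Fix $K>0$ and set $\tilde g\vcentcolon= g\mathbf{1}\{G\leq K\}$, $\tilde{\mathcal{G}}_K\vcentcolon=\{\tilde g:g\in\mathcal{G}\}$. Writing $g=\tilde g+g\mathbf{1}\{G>K\}$ and bounding exactly as in the final paragraph of the proof of Lemma \ref{Lfiniteclass} gives
\[
\lVert P_n-P\rVert_\mathcal{G}\leq\lVert P_n-P\rVert_{\tilde{\mathcal{G}}_K}+(P_n-P)G+2PG\mathbf{1}\{G>K\},
\]
whose last two terms vanish by the scalar weak law of large numbers and by sending $K\to\infty$ respectively, using $G\in L^1(\mathcal{X},P;\mathbb{R})$. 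So it suffices to show $\lVert P_n-P\rVert_{\tilde{\mathcal{G}}_K}\stackrel{P}{\to}0$ for each $K$. Because truncation by $\mathbf{1}\{G\leq K\}$ is a pointwise contraction, a $\delta$-net in $(\mathcal{G},\lVert\cdot\rVert_{1,P_n})$ yields, after truncation, a $\delta$-net in $(\tilde{\mathcal{G}}_K,\lVert\cdot\rVert_{1,P_n})$, so the entropy hypothesis transfers verbatim to $\tilde{\mathcal{G}}_K$. Since $\tilde{\mathcal{G}}_K$ is uniformly bounded by $K$, a Chebyshev argument gives $\mathbb{P}(\lVert(P_n-P)\tilde g\rVert_\mathcal{Y}>a/2)\leq 1/2$ for all $\tilde g\in\tilde{\mathcal{G}}_K$ once $n$ is large, so Lemma \ref{Lsymmetrisationprobability} further reduces the problem to showing $\lVert P^\sigma_n\rVert_{\tilde{\mathcal{G}}_K}\stackrel{P}{\to}0$.

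\textbf{Net reduction and vector Hoeffding.} Fix $\delta>0$, condition on $X_1,\ldots,X_n$, let $N_n\vcentcolon=N(\delta,\tilde{\mathcal{G}}_K,\lVert\cdot\rVert_{1,P_n})$, and pick a $\delta$-net $\{\tilde g_1^*,\ldots,\tilde g_{N_n}^*\}\subset\tilde{\mathcal{G}}_K$ in $\lVert\cdot\rVert_{1,P_n}$. For each $\tilde g\in\tilde{\mathcal{G}}_K$, choosing $j$ with $\lVert\tilde g-\tilde g_j^*\rVert_{1,P_n}\leq\delta$ and applying the triangle inequality gives $\lVert P^\sigma_n\tilde g\rVert_\mathcal{Y}\leq\max_j\lVert P^\sigma_n\tilde g_j^*\rVert_\mathcal{Y}+\delta$, because $\lVert P^\sigma_n(\tilde g-\tilde g_j^*)\rVert_\mathcal{Y}\leq\frac{1}{n}\sum_i\lVert(\tilde g-\tilde g_j^*)(X_i)\rVert_\mathcal{Y}=\lVert\tilde g-\tilde g_j^*\rVert_{1,P_n}$. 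Conditional on the data, the summands $\{\sigma_i\tilde g_j^*(X_i)\}_{i=1}^n$ are independent, zero-mean, and bounded in $\lVert\cdot\rVert_\mathcal{Y}$ by $K$, so Proposition \ref{Phoeffdinghilbert} together with the union bound yields, for any $a>0$,
\[
\mathbb{P}_\sigma\!\bigl(\max_{1\leq j\leq N_n}\lVert P^\sigma_n\tilde g_j^*\rVert_\mathcal{Y}\geq a\mid X_1,\ldots,X_n\bigr)\leq 2\exp\!\Bigl(H(\delta,\tilde{\mathcal{G}}_K,\lVert\cdot\rVert_{1,P_n})-\tfrac{na^2}{4K^2}\Bigr).
\]

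\textbf{Main obstacle.} The delicate point is converting the in-probability entropy hypothesis into a genuine decay of the right-hand side. This I would handle by splitting on the event $A_n\vcentcolon=\{H(\delta,\tilde{\mathcal{G}}_K,\lVert\cdot\rVert_{1,P_n})\leq na^2/(8K^2)\}$: by hypothesis $\mathbb{P}(A_n)\to1$; on $A_n$ the displayed conditional probability is at most $2\exp(-na^2/(8K^2))\to0$; and on $A_n^c$ the trivial bound $1$ suffices. Taking unconditional expectation and using dominated convergence gives $\max_j\lVert P^\sigma_n\tilde g_j^*\rVert_\mathcal{Y}\stackrel{P}{\to}0$ for every $a>0$, whence $\lVert P^\sigma_n\rVert_{\tilde{\mathcal{G}}_K}\leq\max_j\lVert P^\sigma_n\tilde g_j^*\rVert_\mathcal{Y}+\delta\stackrel{P}{\to}\delta$; sending $\delta\downarrow 0$, then chaining back through the symmetrisation bound and finally through the truncation decomposition with $K\to\infty$, yields the desired $\lVert P_n-P\rVert_\mathcal{G}\stackrel{P}{\to}0$.
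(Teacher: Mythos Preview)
Your proposal is correct and follows essentially the same route as the paper's proof: truncate at level $K$, pass to a $\delta$-net in $\lVert\cdot\rVert_{1,P_n}$ (observing that truncation is a contraction so the entropy bound transfers), apply Proposition~\ref{Phoeffdinghilbert} with a union bound conditionally on the data, absorb the random entropy via the event $A_n$ whose probability tends to~$1$ by hypothesis, and de-symmetrise via Lemma~\ref{Lsymmetrisationprobability} after the Chebyshev check. The only cosmetic difference is the order in which you invoke symmetrisation versus the Hoeffding/union-bound step and the specific parametrisation of the threshold; the paper writes the Hoeffding bound in the form $2K\sqrt{(t+\log N)/n}$ and splits on $\{2K\sqrt{\log N/n}\geq\delta\}$, which is exactly your event $A_n$ in different clothing.
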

\begin{proof}
	Take any \(K>0\) and \(\delta>0\). Denote again by \(\mathcal{F}_n\) the \(\sigma\)-algebra generated by \(X_1,...,X_n\), and define \(\mathcal{G}_K=\{g\mathbf{1}\{G\leq K\}:g\in\mathcal{G}\}\). Let \(g_1,...,g_N\), with \(N=N(\delta,\mathcal{G},\lVert\cdot\rVert_{1,P_n})\), be a minimal \(\delta\)-covering of \(\mathcal{G}\). Then \(N\) is a random variable, that is measurable with respect to \(\mathcal{F}_n\). Moreover, writing \(\tilde{g}_j=g_j\mathbf{1}\{G\leq K\}\) for each \(j=1,...,N\), \(\tilde{g}_1,...,\tilde{g}_N\) form a \(\delta\)-covering of \(\mathcal{G}_K\), since, for any \(\tilde{g}=g\mathbf{1}\{G\leq K\}\in\mathcal{G}_K\) for \(g\in\mathcal{G}\), there exists \(j\in\{1,...,N\}\) with \(\lVert g-g_j\rVert_{1,P_n}\leq\delta\), so \(\lVert\tilde{g}-\tilde{g}_j\rVert_{1,P_n}\leq\lVert g-g_j\rVert_{1,P_n}\leq\delta\). 
	
	Note that, when \(\left\lVert\tilde{g}-\tilde{g}_j\right\rVert_{1,P_n}=P_n\left\lVert\tilde{g}-\tilde{g}_j\right\rVert_\mathcal{Y}\leq\delta\), we have
	\[\left\lVert P^\sigma_n\tilde{g}\right\rVert_\mathcal{Y}\leq\left\lVert P^\sigma_n\tilde{g}_j\right\rVert_\mathcal{Y}+\left\lVert P^\sigma_n\tilde{g}-P^\sigma_n\tilde{g}_j\right\rVert_\mathcal{Y}\leq\left\lVert P^\sigma_n\tilde{g}_j\right\rVert_\mathcal{Y}+P_n\left\lVert\tilde{g}-\tilde{g}_j\right\rVert_\mathcal{Y}\leq\left\lVert P^\sigma_n\tilde{g}_j\right\rVert_\mathcal{Y}+\delta.\]
	So for any \(\tilde{g}\in\mathcal{G}_K\),
	\[\left\lVert P^\sigma_n\tilde{g}\right\rVert_\mathcal{Y}\leq\max_{1\leq j\leq N}\left\lVert P^\sigma_n\tilde{g}_j\right\rVert_\mathcal{Y}+\delta.\tag{*}\]
	By Hoeffding's inequality and union bound (as in the proof of Lemma \ref{Lfiniteclass}, since \(N\) is measurable with respect to \(\mathcal{F}_n\)), for any \(t>0\), we have
	\[\mathbb{P}\left(\max_{1\leq j\leq N}\left\lVert P^\sigma_n\tilde{g}_j\right\rVert_\mathcal{Y}\geq2K\sqrt{\frac{t+\log N}{n}}\mid\mathcal{F}_n\right)\leq2e^{-t}.\]
	We then apply (*) and integrate both sides (to remove the conditioning on \(\mathcal{F}_n\)) to see that, for any \(t>0\),
	\[\mathbb{P}\left(\left\lVert P^\sigma_n\right\rVert_{\mathcal{G}_K}\geq\delta+2K\sqrt{\frac{t+\log N}{n}}\right)\leq2e^{-t}.\]
	Then see that, using the elementary inequality \(\sqrt{a}+\sqrt{b}\geq\sqrt{a+b}\), 
	\begin{alignat*}{2}
		&\mathbb{P}\left(\left\lVert P^\sigma_n\right\rVert_{\mathcal{G}_K}\geq2\delta+2K\sqrt{\frac{t}{n}}\right)\\
		&\leq\mathbb{P}\left(\left\lVert P^\sigma_n\right\rVert_{\mathcal{G}_K}\geq\delta+2K\sqrt{\frac{t}{n}}+2K\sqrt{\frac{\log N}{n}}\right)+\mathbb{P}\left(2K\sqrt{\frac{\log N}{n}}\geq\delta\right)\\
		&\leq\mathbb{P}\left(\left\lVert P^\sigma_n\right\rVert_{\mathcal{G}_K}\geq\delta+2K\sqrt{\frac{t+\log N}{n}}\right)+\mathbb{P}\left(2K\sqrt{\frac{1}{n}H(\delta,\mathcal{G},\lVert\cdot\rVert_{1,P_n})}\geq\delta\right)\\
		&\leq2e^{-t}+\mathbb{P}\left(2K\sqrt{\frac{1}{n}H(\delta,\mathcal{G},\lVert\cdot\rVert_{1,P_n})}\geq\delta\right). 
	\end{alignat*}
	Also, by Chebyshev's inequality, for each \(\tilde{g}\in\mathcal{G}_K\), we have, for any \(t\geq\frac{1}{8}\)
	\begin{alignat*}{2}
		\mathbb{P}\left(\left\lVert\left(P_n-P\right)\tilde{g}\right\rVert_\mathcal{Y}>4\delta+4K\sqrt{\frac{t}{n}}\right)&\leq\mathbb{P}\left(\left\lVert\left(P_n-P\right)\tilde{g}\right\rVert_\mathcal{Y}>4K\sqrt{\frac{t}{n}}\right)\\
		&\leq\frac{n\mathbb{E}\left[\left\lVert\left(P_n-P\right)\tilde{g}\right\rVert_\mathcal{Y}^2\right]}{16K^2t}\\
		&\leq\frac{1}{16t}\\
		&\leq\frac{1}{2}.
	\end{alignat*}
	Hence, we can apply symmetrisation with probabilities again (Lemma \ref{Lsymmetrisationprobability}) to see that, for any \(t\geq\frac{1}{8}\),
	\begin{alignat*}{2}
		\mathbb{P}\left(\left\lVert P_n-P\right\rVert_{\mathcal{G}_K}\geq8\delta+8K\sqrt{\frac{t}{n}}\right)&\leq4\mathbb{P}\left(\left\lVert P^\sigma_n\right\rVert_{\mathcal{G}_K}\geq2\delta+2K\sqrt{\frac{t}{n}}\right)\\
		&\leq2^{-t}+\mathbb{P}\left(2K\sqrt{\frac{1}{n}H(\delta,\mathcal{G},\lVert\cdot\rVert_{1,P_n})}\geq\delta\right).
	\end{alignat*}
	Here, since \(\delta>0\) was arbitrary and \(\frac{1}{n}H(\delta,\mathcal{G},\lVert\cdot\rVert_{1,P_n})\stackrel{P}{\rightarrow}0\) by hypothesis, we have that \(\mathcal{G}_K\) is Glivenko Cantelli. 
	
	Finally, see that
	\[\left\lVert P_n-P\right\rVert_\mathcal{G}\leq\sup_{\tilde{g}\in\mathcal{G}_K}\left\lVert\left(P_n-P\right)\tilde{g}\right\rVert_\mathcal{Y}+\sup_{g\in\mathcal{G}}\left\lVert\left(P_n-P\right)g\mathbf{1}\left\{G>K\right\}\right\rVert_\mathcal{Y}.\]
	Here, the first term converges to 0 in probability for any \(K>0\), as shown above, and the second term decomposes as 
	\begin{alignat*}{2}
		\sup_{g\in\mathcal{G}}\left\lVert\left(P_n-P\right)g\mathbf{1}\left\{G>K\right\}\right\rVert_\mathcal{Y}&\leq\left(P_n+P\right)G\mathbf{1}\left\{G>K\right\}\\
		&=\left(P_n-P\right)G\mathbf{1}\left\{G>K\right\}+2PG\mathbf{1}\left\{G>K\right\}\\
		&\leq\left(P_n-P\right)G+2PG\mathbf{1}\left\{G>K\right\}.
	\end{alignat*}
	Here, the first term converges to 0 in probability by the weak law of large numbers, and the second term converges to 0 as \(K\rightarrow\infty\), by \citet[p.71, Lemma 3.10]{cinlar2011probability}, since \(G\) is integrable by hypothesis. 
\end{proof}
\subsection{Chaining and asymptotic equicontinuity with empirical entropy}\label{SSchaining}
In this subsection we show that, with additional conditions on the entropy of \(\mathcal{G}\) (which we assume to be totally bounded with respect to the appropriate metric) and a technique called \say{chaining}, we can derive explicit finite-sample bounds, and show the asymptotic continuity of the empirical process indexed by \(\mathcal{G}\) (see Definition \ref{Dempiricalprocess}). As before, we work conditionally on the samples, and denote the \(\sigma\)-algebra generated by \(X_1,...,X_n\) as \(\mathcal{F}_n\). 

Suppose that \(\mathcal{G}\) has an envelope \(G\in L^2(\mathcal{X},P;\mathbb{R})\) (see Definition \ref{Denvelope}). Then the quantity \(R=\sup_{g\in\mathcal{G}}\left\lVert g\right\rVert_{2,P}\) is finite, since
\[R^2=\sup_{g\in\mathcal{G}}\mathbb{E}\left[\left\lVert g(X)\right\rVert_\mathcal{Y}^2\right]\leq\mathbb{E}\left[\sup_{g\in\mathcal{G}}\left\lVert g(X)\right\rVert_\mathcal{Y}^2\right]=\mathbb{E}\left[G^2\right]<\infty.\]
Similarly, the quantity \(R_n=\sup_{g\in\mathcal{G}}\left\lVert g\right\rVert_{2,P_n}\) is almost surely finite. We call \(R\) and \(R_n\) the \textit{theoretical radius} and \textit{empirical radius} of \(\mathcal{G}\), respectively. Note that \(R_n\) is a random quantity, measurable with respect to \(\mathcal{F}_n\). 

Let us fix \(S\in\mathbb{N}\). To ease the notation, for \(s=0,1,...,S\), write \(N_s=N(2^{-s}R_n,\mathcal{G},\lVert\cdot\rVert_{2,P_n})\) for the \(2^{-s}R_n\)-covering number of \(\mathcal{G}\) with respect to the \(\lVert\cdot\rVert_{2,P_n}\)-metric, which we assume to be finite. Let \(\{g^s_j\}^{N_s}_{j=1}\subset\mathcal{G}\) be a \(2^{-s}R_n\)-covering set of \(\mathcal{G}\) with respect to the \(\lVert\cdot\rVert_{2,P_n}\)-metric. Note that \(\{g^0\}=\{0\}\) is an \(R_n\)-covering set of \(\mathcal{G}\), since, for any \(g\in\mathcal{G}\), \(\left\lVert g\right\rVert_{2,P_n}\leq R_n\). Similarly, write \(H_s=\log N_s\) for each \(s=0,1,...,S\), for the corresponding entropy. Note that the quantities \(N_s\) and \(H_s\), as well as the covering set \(\{g^s_j\}^{N_s}_{j=1}\), are random quantities that are measurable with respect to \(\mathcal{F}_n\). 

Now fix \(g\in\mathcal{G}\). Then define
\begin{alignat*}{2}
	g^{S+1}&\vcentcolon=\argmin_{\{g^{S+1}_j\}_{j=1}^{N_{S+1}}}\left\{\left\lVert g-g_j^{S+1}\right\rVert_{2,P_n}\right\}\\
	g^S&\vcentcolon=\argmin_{\{g^S_j\}_{j=1}^{N_S}}\left\{\left\lVert g^{S+1}-g_j^S\right\rVert_{2,P_n}\right\}\\
	\vdots&\qquad\vdots\\
	g^s&\vcentcolon=\argmin_{\{g^s_j\}_{j=1}^{N_s}}\left\{\left\lVert g^{s+1}-g_j^s\right\rVert_{2,P_n}\right\}\\
	\vdots&\qquad\vdots\\
	g^0&\vcentcolon=0.
\end{alignat*}
\begin{proposition}[Chaining]\label{Pchaining}
	We fix \(S\in\mathbb{N}\). Define
	\[J_n\vcentcolon=\sum^S_{s=0}2^{-s}R_n\sqrt{2H_{s+1}}.\]
	\begin{enumerate}[(i)]
		\item For all \(t>0\), 
		\[\mathbb{P}\left(\sup_{g\in\mathcal{G}}\left\lVert\sum^S_{s=0}P^\sigma_n\left(g^{s+1}-g^s\right)\right\rVert_\mathcal{Y}\geq\frac{\sqrt{2}J_n}{\sqrt{n}}+6R_n\sqrt{\frac{1+t}{n}}\mid\mathcal{F}_n\right)\leq2e^{-t}.\]
		\item Suppose that \(\varepsilon_1,...,\varepsilon_n\) are i.i.d. Gaussian random variables in \(\mathcal{Y}\) with mean 0 and covariance operator \(Q\). Without loss of generality (by rescaling if necessary), assume \(\textnormal{Tr}Q=1\). For each \(g\in\mathcal{G}\), we can consider the following inner product:
		\[\left\langle\varepsilon,g\right\rangle_{2,P_n}=\frac{1}{n}\sum^n_{i=1}\left\langle\varepsilon_i,g(X_i)\right\rangle_\mathcal{Y}.\]
		Then for all \(t>0\),
		\[\mathbb{P}\left(\sup_{g\in\mathcal{G}}\sum^S_{s=0}\left\langle\varepsilon,g^{s+1}-g^s\right\rangle_{2,P_n}\geq\frac{J_n}{\sqrt{n}}+4R_n\sqrt{\frac{1+t}{n}}\mid\mathcal{F}_n\right)\leq e^{-t}.\]
	\end{enumerate}
\end{proposition}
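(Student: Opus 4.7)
The plan is to prove both parts by the classical chaining scheme, carried out conditionally on \(\mathcal{F}_n\) (so that the nets \(\{g_j^s\}\), the cardinalities \(N_s\), and the empirical radius \(R_n\) may be treated as fixed), with the concentration step supplied by the Hilbert-space tools of Appendix \ref{SconcentrationHilbfull} --- Hoeffding's inequality (Proposition \ref{Phoeffdinghilbert}) for part (i) and Gaussian concentration (Lemma \ref{Lconcentrationrealgaussian}) for part (ii).

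For part (i), the essential geometric ingredient is that by construction \(\|g^{s+1} - g^s\|_{2,P_n} \leq 2^{-s}R_n\): indeed \(g^{s+1}\in\mathcal{G}\) and \(g^s\) is the nearest element of the \(2^{-s}R_n\)-net to \(g^{s+1}\). Conditionally on \(\mathcal{F}_n\), the sum \(P_n^\sigma(g_j^{s+1} - g_k^s) = \frac{1}{n}\sum_i \sigma_i(g_j^{s+1}-g_k^s)(X_i)\) consists of independent centred terms bounded by \(c_i = \|(g_j^{s+1}-g_k^s)(X_i)\|_\mathcal{Y}\) with \(b^2 = \sum c_i^2 \leq n\,4^{-s}R_n^2\), so Proposition \ref{Phoeffdinghilbert} yields \(\mathbb{P}(\|P_n^\sigma(g_j^{s+1}-g_k^s)\|_\mathcal{Y} \geq 2\cdot 2^{-s}R_n\sqrt{t_s/n}\mid\mathcal{F}_n)\leq 2e^{-t_s}\). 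Crucially, since the greedy recursion determines \(g^s\) uniquely from \(g^{s+1}\), at level \(s\) there are at most \(N_{s+1}\) distinct pairs arising as \(g\) ranges over \(\mathcal{G}\), so a union bound with \(t_s = H_{s+1} + s + 1 + t\) gives per-level failure probability \(\leq 2e^{-(s+1+t)}\). Summing in \(s\) bounds the total failure probability by \(2e^{-t}/(e-1)\leq 2e^{-t}\). On the good event, the triangle inequality, then \(\sqrt{a+b}\leq\sqrt{a}+\sqrt{b}\) and \(\sqrt{s+1+t}\leq\sqrt{(1+s)(1+t)}\), combined with the elementary bound \(\sum_{s\geq 0}2^{-s}\sqrt{s+1}<3\), assemble to exactly \(\sqrt{2}J_n/\sqrt{n}+6R_n\sqrt{(1+t)/n}\).

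Part (ii) follows the same scaffolding with Gaussian concentration in place of Hoeffding. Conditional on \(\mathcal{F}_n\), the real random variable \(\langle\varepsilon, g_j^{s+1}-g_k^s\rangle_{2,P_n}\) is a sum of independent centred Gaussians, with total variance bounded by \(\frac{1}{n}\|g_j^{s+1}-g_k^s\|_{2,P_n}^2\) via \(\langle Qv,v\rangle_\mathcal{Y}\leq(\mathrm{Tr}\,Q)\|v\|^2=\|v\|^2\). Lemma \ref{Lconcentrationrealgaussian} then produces a one-sided tail with constant \(\sqrt{2}\) and no factor of two, so the same union-bound structure with \(t_s = H_{s+1}+s+1+t\) delivers the sharper constants \(J_n/\sqrt{n}\) (rather than \(\sqrt{2}J_n/\sqrt{n}\)) and \(4\) (rather than \(6\)), and failure probability \(e^{-t}\) (rather than \(2e^{-t}\)).

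The main obstacle is the union-bound bookkeeping: one must realise that at each level only \(N_{s+1}\) pairs can arise in the chains (not \(N_s N_{s+1}\)), since the greedy definition makes \(g^s\) a deterministic function of \(g^{s+1}\). This refinement is precisely what reduces the leading constant from \(2\) to \(\sqrt{2}\) in part (i) and guarantees the claimed constants throughout. The remaining work --- splitting \(\sqrt{H_{s+1}+s+1+t}\) via subadditivity of \(\sqrt{\cdot}\) and controlling the residual geometric series \(\sum_s 2^{-s}\sqrt{s+1+t}\) --- is routine and requires no further insight.
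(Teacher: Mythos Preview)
Your proposal is correct and follows essentially the same route as the paper: conditional chaining with Proposition \ref{Phoeffdinghilbert} for (i) and the real-Gaussian tail for (ii), union-bounding at each level over the $N_{s+1}$ links (exploiting that $g^s$ is determined by $g^{s+1}$), with only a cosmetic difference in the per-level threshold choice (your $t_s=H_{s+1}+s+1+t$ followed by $\sqrt{s+1+t}\leq\sqrt{(1+s)(1+t)}$ versus the paper plugging $(1+s)(1+t)$ in directly). One small numeric caveat: the bound $\sum_{s\geq0}2^{-s}\sqrt{s+1}<3$ you quote is just too loose to produce the constant $4$ in part (ii) (it gives $3\sqrt{2}>4$); you need the equally elementary but slightly sharper $\sum_{s\geq0}2^{-s}\sqrt{2(1+s)}\leq4$, which the paper states directly.
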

\begin{proof}
	\begin{enumerate}[(i)]
		\item Fix \(s\in\{0,1,...,S\}\) and \(k\in\{1,...,N_{s+1}\}\). Denote
		\[g^{s+1,s}_k=\argmin_{\{g^s_j\}_{j=1}^{N_s}}\left\{\left\lVert g^{s+1}_k-g^s_j\right\rVert_{2,P_n}\right\}.\]
		Then
		\[\left\lVert P_n^\sigma\left(g^{s+1}_k-g^{s+1,s}_k\right)\right\rVert_\mathcal{Y}\leq\frac{1}{n}\sum^n_{i=1}\left\lVert g^{s+1}_k\left(X_i\right)-g^{s+1,s}_k\left(X_i\right)\right\rVert_\mathcal{Y},\]
		where
		\[\sqrt{\sum^n_{i=1}\left\lVert g^{s+1}_k\left(X_i\right)-g^{s+1,s}_k\left(X_i\right)\right\rVert_\mathcal{Y}^2}=\sqrt{n}\left\lVert g^{s+1}_k-g^{s+1,s}_k\right\rVert_{2,P_n}\leq\sqrt{n}2^{-s}R_n,\]
		since the \(\{g^s_j\}_{j=1}^{N_s}\) form a \(2^{-s}R_n\)-covering of \((\mathcal{G},\lVert\cdot\rVert_{2,P_n})\). Hence, noting that \(R_n\) is measurable with respect to \(\mathcal{F}_n\), Hoeffding's inequality (Proposition \ref{Phoeffdinghilbert}) gives, for any \(t>0\),
		\[\mathbb{P}\left(\left\lVert P^\sigma_n\left(g^{s+1}_k-g^{s+1,s}_k\right)\right\rVert_\mathcal{Y}\geq 2^{-(s-1)}R_n\sqrt{\frac{t}{n}}\mid\mathcal{F}_n\right)\leq2e^{-t}.\]
		Therefore (by the union bound), for each \(s=0,1,...,S\) and all \(t>0\),
		\[\mathbb{P}\left(\max_{k\in\{1,...,N_{s+1}\}}\left\lVert P^\sigma_n\left(g^{s+1}_k-g^{s+1,s}_k\right)\right\rVert_\mathcal{Y}\geq2^{-(s-1)}R_n\sqrt{\frac{H_{s+1}+t}{n}}\mid\mathcal{F}_n\right)\leq2e^{-t}.\]
		Fix \(t\) and for \(s=0,1,...,S\), let
		\begin{alignat*}{2}
			\alpha_s\vcentcolon&=2^{-(s-1)}R_n\left(\sqrt{H_{s+1}}+\sqrt{(1+s)(1+t)}\right)\\
			&\geq2^{-(s-1)}R_n\left(\sqrt{H_{s+1}+(1+s)(1+t)}\right),
		\end{alignat*}
		using \(\sqrt{a}+\sqrt{b}\geq\sqrt{a+b}\). Then using \(\sum^S_{s=0}2^{-(s-1)}\sqrt{1+s}\leq6\),
		\begin{alignat*}{2}
			\sum^S_{s=0}\alpha_s&=\sqrt{2}J_n+\sum^S_{s=0}2^{-(s-1)}R_n\sqrt{(1+s)(1+t)}\\
			&\leq\sqrt{2}J_n+6R_n\sqrt{1+t}.
		\end{alignat*}
		Therefore
		\begin{alignat*}{2}
			&\mathbb{P}\left(\sup_{g\in\mathcal{G}}\left\lVert\sum^S_{s=0}P^\sigma_n\left(g^{s+1}-g^s\right)\right\rVert_\mathcal{Y}\geq\frac{\sqrt{2}J_n}{\sqrt{n}}+6R_n\sqrt{\frac{1+t}{n}}\mid\mathcal{F}_n\right)\\
			&\leq\mathbb{P}\left(\sup_{g\in\mathcal{G}}\left\lVert\sum^S_{s=0}P^\sigma_n\left(g^{s+1}-g^s\right)\right\rVert_\mathcal{Y}\geq\frac{1}{\sqrt{n}}\sum^S_{s=0}\alpha_s\mid\mathcal{F}_n\right)\\
			&\leq\mathbb{P}\left(\sum^S_{s=0}\sup_{g\in\mathcal{G}}\left\lVert P^\sigma_n\left(g^{s+1}-g^s\right)\right\rVert_\mathcal{Y}\geq\frac{1}{\sqrt{n}}\sum^S_{s=0}\alpha_s\mid\mathcal{F}_n\right)\\
			&\leq\sum^S_{s=0}\mathbb{P}\left(\sup_{g\in\mathcal{G}}\left\lVert P^\sigma_n\left(g^{s+1}-g^s\right)\right\rVert_\mathcal{Y}\geq\frac{1}{\sqrt{n}}\alpha_s\mid\mathcal{F}_n\right)\\
			&=\sum^S_{s=0}\mathbb{P}\left(\max_{k=1,...,N_{s+1}}\left\lVert P^\sigma_n\left(g^{s+1}_k-g^{s+1,s}_k\right)\right\rVert_\mathcal{Y}\geq\frac{1}{\sqrt{n}}\alpha_s\mid\mathcal{F}_n\right)\\
			&\leq2\sum^S_{s=0}e^{-(1+s)(1+t)}\\
			&\leq2e^{-t}.
		\end{alignat*}
		\item Fix \(s\in\left\{0,1,...,S\right\}\) and \(k\in\left\{1,...,N_{s+1}\right\}\). Denote
		\[g^{s+1,s}_k=\argmin_{\{g^s_j\}_{j=1}^{N_s}}\left\{\left\lVert g^{s+1}_k-g^s_j\right\rVert_{2,P_n}\right\}.\]
		Let \(\lambda>0\) be arbitrary. Then Markov's inequality gives us, for any \(t>0\),
		\begin{alignat*}{2}
			&\mathbb{P}\left(\left\langle\varepsilon,g^{s+1}_k-g^{s+1,s}_k\right\rangle_{2,P_n}\geq2^{-s}R_n\sqrt{\frac{2t}{n}}\mid\mathcal{F}_n\right)\\
			&\leq e^{-\lambda2^{-s}R_n\sqrt{\frac{2t}{n}}}\mathbb{E}\left[e^{\frac{\lambda}{n}\sum^n_{i=1}\left\langle\varepsilon_i,g^{s+1}_k(X_i)-g^{s+1,s}_k(X_i)\right\rangle_\mathcal{Y}}\mid\mathcal{F}_n\right]\\
			&=e^{-\lambda2^{-s}R_n\sqrt{\frac{2t}{n}}}\prod^n_{i=1}\mathbb{E}\left[e^{\frac{\lambda}{n}\left\langle\varepsilon_i,g^{s+1}_k(X_i)-g^{s+1,s}_k(X_i)\right\rangle_\mathcal{Y}}\mid\mathcal{F}_n\right].
		\end{alignat*}
		Here, since \(\varepsilon_i\) is a \(\mathcal{Y}\)-valued Gaussian random variable with mean 0 and covariance operator \(Q\) for each \(i=1,...,n\), the distribution of the real variable \(\frac{\lambda}{n}\left\langle\varepsilon_i,g^{s+1}_k(X_i)-g^{s+1,s}_k(X_i)\right\rangle_\mathcal{Y}\) conditioned on \(\mathcal{F}_n\) is real Gaussian with mean \(0\) and variance
		\[\frac{\lambda^2}{n^2}\mathbb{E}\left[\left\langle g^{s+1}_k(X_i)-g^{s+1,s}_k(X_i),\varepsilon_i\right\rangle_\mathcal{Y}^2\mid\mathcal{F}_n\right]\leq\frac{\lambda^2}{n^2}\left\lVert g^{s+1}_k(X_i)-g^{s+1,s}_k(X_i)\right\rVert^2_\mathcal{Y},\]
		which follows from the Cauchy-Schwarz inequality and the fact that \(\mathbb{E}\left[\left\lVert\varepsilon_i\right\rVert^2_\mathcal{Y}\right]=\text{Tr}Q=1\). Hence, 
		\begin{alignat*}{2}
			&\mathbb{P}\left(\left\langle\varepsilon,g^{s+1}_k-g^{s+1,s}_k\right\rangle_{2,P_n}\geq2^{-s}R_n\sqrt{\frac{2t}{n}}\mid\mathcal{F}_n\right)\\
			&\leq e^{-\lambda2^{-s}R_n\sqrt{\frac{2t}{n}}}\prod^n_{i=1}e^{\frac{\lambda^2}{2n^2}\left\lVert g^{s+1}_k(X_i)-g^{s+1,s}_k(X_i)\right\rVert^2_\mathcal{Y}}\\
			&=e^{-\lambda2^{-s}R_n\sqrt{\frac{2t}{n}}}e^{\frac{\lambda^2}{2n^2}\sum^n_{i=1}\left\lVert g^{s+1}_k(X_i)-g^{s+1,s}_k(X_i)\right\rVert^2_\mathcal{Y}}\\
			&=e^{-\lambda2^{-s}R_n\sqrt{\frac{2t}{n}}}e^{\frac{\lambda^2}{2n}\left\lVert g^{s+1}_k-g^{s+1,s}_k\right\rVert^2_{2,P_n}}\\
			&\leq e^{-\lambda2^{-s}R_n\sqrt{\frac{2t}{n}}}e^{\frac{\lambda^2}{2n}\left(2^{-s}R_n\right)^2}.
		\end{alignat*}
		Now let \(\lambda=\frac{\sqrt{2nt}}{2^{-s}R_n}\) to see that
		\[\mathbb{P}\left(\left\langle\varepsilon,g^{s+1}_k-g^{s+1,s}_k\right\rangle_{2,P_n}\geq2^{-s}R_n\sqrt{\frac{2t}{n}}\mid\mathcal{F}_n\right)\leq e^{-t}.\]
		Therefore, by the union bound, for each \(s=0,1,...,S\) and all \(t>0\), 
		\[\mathbb{P}\left(\max_{k\in\{1,...,N_{s+1}\}}\left\langle\varepsilon,g^{s+1}_k-g^{s+1,s}_k\right\rangle_{2,P_n}\geq2^{-s}R_n\sqrt{\frac{2(t+H_{s+1})}{n}}\mid\mathcal{F}_n\right)\leq e^{-t}.\]
		Fix \(t\) and for \(s=0,1,...,S\), let
		\[\alpha_s\vcentcolon=2^{-s}R_n\left(\sqrt{2H_{s+1}}+\sqrt{2(1+s)(1+t)}\right)\geq2^{-s}R_n\sqrt{2\left(H_{s+1}+(1+s)(1+t)\right)}\]
		using \(\sqrt{a}+\sqrt{b}\geq\sqrt{a+b}\). Then using \(\sum^\infty_{s=0}2^{-s}\sqrt{2(1+s)}\leq4\),
		\[\sum^\infty_{s=0}\alpha_s=J_n+\sum^\infty_{s=0}2^{-s}R_n\sqrt{2(1+s)(1+t)}\leq J_n+4R_n\sqrt{1+t}.\]
		Then
		\begin{alignat*}{2}
			&\mathbb{P}\left(\sup_{g\in\mathcal{G}}\sum^S_{s=0}\left\langle\varepsilon,g^{s+1}-g^s\right\rangle_{2,P_n}\geq\frac{J_n}{\sqrt{n}}+4R_n\sqrt{\frac{1+t}{n}}\mid\mathcal{F}_n\right)\\
			&\leq\mathbb{P}\left(\sum^S_{s=0}\sup_{g\in\mathcal{G}}\left\langle\varepsilon,g^{s+1}-g^s\right\rangle_{2,P_n}\geq\frac{1}{\sqrt{n}}\sum^S_{s=0}\alpha_s\mid\mathcal{F}_n\right)\\
			&\leq\sum^S_{s=0}\mathbb{P}\left(\sup_{g\in\mathcal{G}}\left\langle\varepsilon,g^{s+1}-g^s\right\rangle_{2,P_n}\geq\frac{1}{\sqrt{n}}\alpha_s\mid\mathcal{F}_n\right)\\
			&=\sum^S_{s=0}\mathbb{P}\left(\max_{k=1,...,N_{s+1}}\left\langle\varepsilon,g^{s+1}_k-g^{s+1,s}_k\right\rangle_{2,P_n}\geq\frac{1}{\sqrt{n}}\alpha_s\mid\mathcal{F}_n\right)\\
			&\leq\sum^S_{s=0}e^{-(1+s)(1+t)}\\
			&\leq e^{-t}.
		\end{alignat*}
	\end{enumerate}
\end{proof}
Recall from Definition \ref{Dempiricalprocess} the empirical process, \(\left\{\nu_n(g)=\sqrt{n}\left(P_n-P\right)g:g\in\mathcal{G}\right\}\). Under additional conditions, we can use the previous lemma to show its asymptotic equicontinuity. We continue to assume that the envelope \(G=\sup_{g\in\mathcal{G}}\left\lVert g\right\rVert_\mathcal{Y}\) satisfies \(G\in L^2(\mathcal{X},P;\mathbb{R})\). 
\begin{theorem}\label{Tasymptoticequicontinuity}
	Suppose that \(\mathcal{G}\) satisfies the \say{uniform entropy condition}, i.e. there exists a decreasing function \(H:\mathbb{R}\rightarrow\mathbb{R}\) satisfying
	\[\int^1_0\sqrt{H(u)}du<\infty\]
	such that, for all \(u>0\) and any probability distribution \(Q\) with finite support, 
	\[H(u\left\lVert G\right\rVert_{2,Q},\mathcal{G},\lVert\cdot\rVert_{2,Q})\leq H(u).\]
	Then the empirical process \(\nu_n\) is asymptotically equicontinuous.
\end{theorem}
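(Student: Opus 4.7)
The plan is to prove the standard equivalent formulation: for every $\eta > 0$,
\begin{equation*}
\lim_{\delta \to 0}\limsup_{n \to \infty}\mathbb{P}\bigl(\|\nu_n\|_{\mathcal{G}(\delta)} > \eta\bigr) = 0, \quad \mathcal{G}(\delta) := \{g_1 - g_2 : g_1, g_2 \in \mathcal{G},\ \|g_1 - g_2\|_{2,P} < \delta\}.
\end{equation*}
Asymptotic equicontinuity at every $g_0 \in \mathcal{G}$ then follows, since $\nu_n(\hat g_n) - \nu_n(g_0) = \nu_n(\hat g_n - g_0)$ and $\hat g_n - g_0$ lies in $\mathcal{G}(\delta)$ with arbitrarily high probability once $\|\hat g_n - g_0\|_{2,P} \stackrel{P}{\to} 0$.

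For each $h \in \mathcal{G}(\delta)$, Chebyshev's inequality yields $\mathbb{P}(\|(P_n - P) h\|_\mathcal{Y} > \eta/(2\sqrt{n})) \leq 4\delta^2/\eta^2 \leq 1/2$ once $\delta \leq \eta/(2\sqrt 2)$, so Lemma \ref{Lsymmetrisationprobability} applied to $P_n - P$ at level $\eta/\sqrt{n}$ gives $\mathbb{P}(\|\nu_n\|_{\mathcal{G}(\delta)} > \eta) \leq 4\,\mathbb{P}(\sup_{h \in \mathcal{G}(\delta)} \|\sqrt n\, P_n^\sigma h\|_\mathcal{Y} > \eta/4)$. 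Working conditionally on $\mathcal{F}_n = \sigma(X_1, \ldots, X_n)$, I apply the chaining bound of Proposition \ref{Pchaining}(i) to $\mathcal{G}(\delta)$ with empirical radius $R_n := \sup_{h \in \mathcal{G}(\delta)} \|h\|_{2, P_n}$ and $S = S_n := \lceil \log_2 n \rceil$ chaining stages. Since $h^0 = 0$ implies $\sum_{s=0}^{S_n} P_n^\sigma(h^{s+1} - h^s) = P_n^\sigma h^{S_n + 1}$ and the residual $\|P_n^\sigma(h - h^{S_n + 1})\|_\mathcal{Y} \leq 2^{-(S_n + 1)} R_n = o(1/\sqrt{n})$, with probability at least $1 - 2e^{-t}$ conditional on $\mathcal{F}_n$,
\begin{equation*}
\sup_{h \in \mathcal{G}(\delta)} \|\sqrt{n}\, P_n^\sigma h\|_\mathcal{Y} \leq \sqrt{2}\, J_n + 6 R_n \sqrt{1 + t} + o(1),
\end{equation*}
where $J_n = \sum_{s=0}^{S_n} 2^{-s} R_n \sqrt{2\, \log N(2^{-(s+1)} R_n, \mathcal{G}(\delta), \|\cdot\|_{2, P_n})}$.

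The uniform entropy condition controls $J_n$: since $\mathcal{G}(\delta) \subseteq \mathcal{G} - \mathcal{G}$ has envelope bounded by $2G$, the hypothesis applied to the discrete measure $P_n$ yields $\log N(u, \mathcal{G}(\delta), \|\cdot\|_{2, P_n}) \leq 2\,H\bigl(u/(2\|G\|_{2, P_n})\bigr)$, and a standard dyadic Riemann-sum estimate gives
\begin{equation*}
J_n \leq C\, \|G\|_{2, P_n} \int_0^{R_n /(2\|G\|_{2, P_n})} \sqrt{H(v)}\, dv.
\end{equation*}
It remains to show $R_n \stackrel{P}{\to} 0$ as $\delta \to 0$, uniformly in $n$. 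The squared-norm class $\{\|h\|_\mathcal{Y}^2 : h \in \mathcal{G}(\delta)\}$ has envelope $4G^2 \in L^1(P)$, and Cauchy-Schwarz gives $\|\, \|h_1\|_\mathcal{Y}^2 - \|h_2\|_\mathcal{Y}^2\,\|_{1, P_n} \leq 4\|G\|_{2, P_n}\|h_1 - h_2\|_{2, P_n}$, so its $\|\cdot\|_{1, P_n}$-entropy inherits a finite bound from the uniform entropy condition on $\mathcal{G}$. Theorem \ref{Tglivenkocantelli} then yields the Glivenko-Cantelli property; combined with $\|h\|_{2, P}^2 \leq \delta^2$, this gives $R_n^2 \leq \delta^2 + o_\mathbb{P}(1)$. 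Together with $\|G\|_{2, P_n} \to \|G\|_{2, P} < \infty$ a.s.\ and $\int_0^1 \sqrt{H(v)}\, dv < \infty$, dominated convergence forces $J_n \stackrel{P}{\to} 0$ as first $n \to \infty$ and then $\delta \to 0$; choosing $t = t_n \to \infty$ slowly (e.g.\ $t_n = \log \log n$) makes the entire bound $o_\mathbb{P}(1)$.

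The hardest part is the coupling between the random empirical radius $R_n$ and the chaining integral: one must simultaneously control the Glivenko-Cantelli property of the squared-norm class and the convergence of $\|G\|_{2, P_n}$ on a common high-probability event, and carry out the dyadic-to-integral estimate uniformly over realisations. A secondary technicality is extending Proposition \ref{Pchaining}(i), stated for deterministic finite $S$, to the data-dependent choice $S = S_n$, together with the measurability caveats for suprema in the chaining construction that were acknowledged in Section \ref{Sempiricalprocesstheory}.
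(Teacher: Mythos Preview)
Your overall architecture---symmetrise via Lemma~\ref{Lsymmetrisationprobability}, chain via Proposition~\ref{Pchaining}(i), pass from the dyadic sum $J_n$ to the entropy integral using the uniform entropy hypothesis, and control the empirical radius $R_n$ through a Glivenko--Cantelli argument---is exactly the route the paper takes. The paper works with $\mathcal{B}(\delta)=\{g\in\mathcal{G}:\lVert g\rVert_{2,P}\le\delta\}$ rather than your difference class $\mathcal{G}(\delta)$, restricts to the deterministic event $\{R_{n,\delta}\le 2\delta\}\cap\{\lVert G\rVert_{2,P_n}\le 2\lVert G\rVert_{2,P}\}$ to make the dyadic-to-integral bound clean, and then lets that event's complement be absorbed by the uniform law of large numbers; your ``common high-probability event'' paragraph is describing the same manoeuvre.

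There is, however, a genuine gap in how you dispose of the confidence parameter $t$. You propose letting $t=t_n\to\infty$ (e.g.\ $t_n=\log\log n$) so that the chaining failure probability $2e^{-t_n}$ vanishes. But the chaining bound contains the term $6R_n\sqrt{1+t_n}$, and for \emph{fixed} $\delta>0$ your own Glivenko--Cantelli step gives $R_n\to\delta$ in probability as $n\to\infty$. Hence $6R_n\sqrt{1+t_n}\to\infty$, and the bound $\sqrt{2}J_n+6R_n\sqrt{1+t_n}+o(1)$ is not $o_{\mathbb P}(1)$ in the required iterated limit $\limsup_n$ followed by $\lim_{\delta\to 0}$; it diverges. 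The paper avoids this by keeping $t$ \emph{fixed}: given targets $\epsilon_1,\epsilon_2>0$, it sets $t=\log(8/\epsilon_2)$ so that $8e^{-t}\le\epsilon_2$, and only \emph{then} chooses $\delta$ small enough (depending on this fixed $t$) that the deterministic quantity $\sqrt{2}\,\lVert G\rVert_{2,P}\,\mathcal{J}(\delta/\lVert G\rVert_{2,P})+12\delta\sqrt{1+t}+2^{-(S-2)}\delta$ falls below $\epsilon_1$. In short: $t$ must be chosen from the desired failure probability, and $\delta$ chosen afterwards as a function of $t$; reversing this by sending $t_n\to\infty$ breaks the argument.

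A minor remark: your ``secondary technicality'' about $S=S_n$ being data-dependent is not actually an issue, since $S_n=\lceil\log_2 n\rceil$ depends only on $n$, not on $X_1,\dots,X_n$; the paper makes the analogous deterministic choice $2^{-(S-2)}\le 1/\sqrt{n}$ without comment.
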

\begin{proof}
	Take any arbitrary \(g_0\in\mathcal{G}\). We will show that \(\nu_n\) is asymptotically equicontinuous at \(g_0\). Take arbitrary \(\epsilon_1,\epsilon_2>0\), and fix \(S\in\mathbb{N}\). Define, for \(\delta>0\), the closed \(\delta\)-ball around the origin:
	\[\mathcal{B}(\delta)\vcentcolon=\left\{g\in\mathcal{G}:\left\lVert g\right\rVert_{2,P}\leq\delta\right\}.\]
	Then clearly, the theoretical radius of \(\mathcal{B}(\delta)\) is \(\sup_{g\in\mathcal{B}(\delta)}\left\lVert g\right\rVert_{2,P}=\delta\). Denote the empirical radius of \(\mathcal{B}(\delta)\) by \(R_{n,\delta}=\sup_{g\in\mathcal{B}(\delta)}\left\lVert g\right\rVert_{2,P_n}\), and analogously to the proof of Proposition \ref{Pchaining}, define
	\[J_{n,\delta}\vcentcolon=\sum^S_{s=0}2^{-s}R_{n,\delta}\sqrt{2H\left(2^{-(s+1)}R_{n,\delta},\mathcal{B}(\delta),\lVert\cdot\rVert_{2,P_n}\right)}.\]
	Also define
	\[\mathcal{J}(\rho)\vcentcolon=8\int^\rho_0\sqrt{2H(u)}du,\qquad\rho>0,\]
	which is bounded for any finite \(\rho>0\), by the uniform entropy condition. 
	
	Define \(A\in\mathcal{F}\) as the event on which \(R_{n,\delta}\leq2\delta\) and \(\left\lVert G\right\rVert_{2,P_n}\leq2\left\lVert G\right\rVert_{2,P}\). Then on this event, we have
	\begin{alignat*}{2}
		J_{n,\delta}&=\sum^S_{s=0}2^{-s}R_{n,\delta}\sqrt{2H\left(2^{-(s+1)}R_{n,\delta},\mathcal{B}(\delta),\lVert\cdot\rVert_{2,P_n}\right)}\\
		&\leq4\int^{R_{n,\delta}}_0\sqrt{2H(u,\mathcal{B}(\delta),\lVert\cdot\rVert_{2,P_n})}du\\
		&\leq4\int^{2\delta}_0\sqrt{2H\left(u,\mathcal{G},\lVert\cdot\rVert_{2,P_n}\right)}du\qquad\text{since }R_{n,\delta}\leq2\delta\text{ on }A\text{ and }\mathcal{B}(\delta)\subseteq\mathcal{G}\\
		&\leq4\int^{2\delta}_0\sqrt{2H\left(\frac{u}{\left\lVert G\right\rVert_{2,P_n}}\right)}du\qquad\text{by the uniform entropy condition}\\
		&\leq4\int^{2\delta}_0\sqrt{2H\left(\frac{u}{2\left\lVert G\right\rVert_{2,P}}\right)}du\qquad\text{since }\left\lVert G\right\rVert_{2,P_n}\leq2\left\lVert G\right\rVert_{2,P}\text{ on }A\text{ and }H\text{ is decreasing.}\\
		&=8\left\lVert G\right\rVert_{2,P}\int^{\frac{\delta}{\left\lVert G\right\rVert_{2,P}}}_0\sqrt{2H(u)}du\qquad\text{by substitution}\\
		&=\left\lVert G\right\rVert_{2,P}\mathcal{J}\left(\frac{\delta}{\left\lVert G\right\rVert_{2,P}}\right).
	\end{alignat*}
	On \(A\), we also have
	\begin{alignat*}{2}
		\sup_{g\in\mathcal{B}(\delta)}\left\lVert P^\sigma_n\left(g-g^{S+1}\right)\right\rVert_\mathcal{Y}&\leq\sup_{g\in\mathcal{B}(\delta)}\left\lVert g-g^{S+1}\right\rVert_{1,P_n}\\
		&\leq\sup_{g\in\mathcal{B}(\delta)}\left\lVert g-g^{S+1}\right\rVert_{2,P_n}\\
		&\leq2^{-(S+1)}R_{n,\delta}\\
		&\leq2^{-S}\delta.\tag{*}
	\end{alignat*}
	So on \(A\), noting that
	\begin{alignat*}{2}
		\left\lVert P^\sigma_n\right\rVert_{\mathcal{B}(\delta)}&=\sup_{g\in\mathcal{B}(\delta)}\left\lVert P^\sigma_n\left(g-g^{S+1}\right)+\sum^S_{s=0}P^\sigma_n\left(g^{s+1}-g^s\right)\right\rVert_\mathcal{Y}\\
		&\leq\sup_{g\in\mathcal{B}(\delta)}\left\lVert P^\sigma_n\left(g-g^{S+1}\right)\right\rVert_\mathcal{Y}+\sup_{g\in\mathcal{B}(\delta)}\left\lVert\sum^S_{s=0}P^\sigma_n\left(g^{s+1}-g^s\right)\right\rVert_\mathcal{Y},
	\end{alignat*}
	we have, for all \(t>0\),
	\begin{alignat*}{2}
		&\mathbb{P}\left(\left\lVert P^\sigma_n\right\rVert_{\mathcal{B}(\delta)}\geq\frac{\sqrt{2}\left\lVert G\right\rVert_{2,P}\mathcal{J}\left(\frac{\delta}{\left\lVert G\right\rVert_{2,P}}\right)}{\sqrt{n}}+12\delta\sqrt{\frac{1+t}{n}}+2^{-S}\delta\mid\mathcal{F}_n\right)\\
		&=\mathbb{P}\left(\sup_{g\in\mathcal{B}(\delta)}\left\lVert P^\sigma_n\left(g-g^{S+1}\right)\right\rVert_\mathcal{Y}+\sup_{g\in\mathcal{B}(\delta)}\left\lVert\sum^S_{s=0}P^\sigma_n\left(g^{s+1}-g^s\right)\right\rVert_\mathcal{Y}\right.\\
		&\qquad\qquad\qquad\left.\geq\frac{\sqrt{2}J_{n,\delta}}{n}+6R_{n,\delta}\sqrt{\frac{1+t}{n}}+2^{-S}\delta\mid\mathcal{F}_n\right)\\
		&\leq\mathbb{P}\left(\sup_{g\in\mathcal{B}(\delta)}\left\lVert\sum^S_{s=0}P^\sigma_n\left(g^{s+1}-g^s\right)\right\rVert_\mathcal{Y}\geq\frac{\sqrt{2}J_{n,\delta}}{\sqrt{n}}+6R_{n,\delta}\sqrt{\frac{1+t}{n}}\mid\mathcal{F}_n\right)\\
		&\leq2e^{-t},
	\end{alignat*}
	where the term \(\mathbb{P}\left(\sup_{g\in\mathcal{B}(\delta)}\left\lVert P^\sigma_n\left(g-g^{S+1}\right)\right\rVert_\mathcal{Y}\geq2^{-S}\delta\mid\mathcal{F}_n\right)\) vanishes by (*) and the last inequality follows Proposition \ref{Pchaining}(i). Then we can de-symmetrise using Lemma \ref{Lsymmetrisationprobability}:
	\begin{alignat*}{2}
		&\mathbb{P}\left(\left\lVert P_n-P\right\rVert_{\mathcal{B}(\delta)}\geq\frac{4\sqrt{2}\left\lVert G\right\rVert_{2,P}\mathcal{J}\left(\frac{\delta}{\left\lVert G\right\rVert_{2,P}}\right)}{\sqrt{n}}+48\delta\sqrt{\frac{1+t}{n}}+2^{-(S-2)}\delta\right)\\
		&\leq4\mathbb{P}\left(\left\lVert P^\sigma_n\right\rVert_{\mathcal{B}(\delta)}\geq\frac{\sqrt{2}\left\lVert G\right\rVert_{2,P}\mathcal{J}\left(\frac{\delta}{\left\lVert G\right\rVert_{2,P}}\right)}{\sqrt{n}}+12\delta\sqrt{\frac{1+t}{n}}+2^{-S}\delta\right)\\
		&\leq8e^{-t}+4\mathbb{P}\left(R_{n,\delta}>2\delta\text{ or }\left\lVert G\right\rVert_{2,P_n}>2\left\lVert G\right\rVert_{2,P}\right)\\
		&=8e^{-t}+4\mathbb{P}\left(\sup_{g\in\mathcal{B}(\delta)\cup\{G\}}\left\lVert g\right\rVert^2_{2,P_n}>4\sup_{g\in\mathcal{B}(\delta)\cup\{G\}}\left\lVert g\right\rVert^2_{2,P}\right).
	\end{alignat*}
	Now let \(t=\log\left(\frac{8}{\epsilon_2}\right)\) and \(S\) large enough such that \(2^{-(S-2)}\leq\frac{1}{\sqrt{n}}\), and \(\delta\) small enough such that
	\[4\sqrt{2}\left\lVert G\right\rVert_{2,P}\mathcal{J}\left(\frac{\delta}{\left\lVert G\right\rVert_{2,P}}\right)+48\delta\sqrt{1+\log\left(\frac{8}{\epsilon_2}\right)}+\delta\leq\epsilon_1.\]
	Then
	\[\mathbb{P}\left(\sqrt{n}\left\lVert P_n-P\right\rVert_{\mathcal{B}(\delta)}>\epsilon_1\right)\leq\epsilon_2+4\mathbb{P}\left(\sup_{g\in\mathcal{B}(\delta)\cup\{G\}}\left\lVert g\right\rVert^2_{2,P_n}>4\sup_{g\in\mathcal{B}(\delta)\cup\{G\}}\left\lVert g\right\rVert^2_{2,P}\right).\]
	Hence, for any \(g\in\mathcal{G}\) such that \(\left\lVert g-g_0\right\rVert_{2,P}\leq\delta\), 
	\begin{alignat*}{2}
		\mathbb{P}\left(\left\lVert\nu_n(g)-\nu_n(g_0)\right\rVert_\mathcal{Y}>\epsilon_1\right)&=\mathbb{P}\left(\sqrt{n}\left\lVert\left(P_n-P\right)\left(g-g_0\right)\right\rVert_\mathcal{Y}>\epsilon_1\right)\\
		&\leq\mathbb{P}\left(\sqrt{n}\left\lVert P_n-P\right\rVert_{\mathcal{B}(\delta)}>\epsilon_1\right)\\
		&\leq\epsilon_2+4\mathbb{P}\left(\sup_{g\in\mathcal{B}(\delta)\cup\{G\}}\left\lVert g\right\rVert^2_{2,P_n}>4\sup_{g\in\mathcal{B}(\delta)\cup\{G\}}\left\lVert g\right\rVert^2_{2,P}\right).
	\end{alignat*}
	Here, by the uniform law of large numbers on \(\mathcal{B}(\delta)\cap\{G\}\) (Theorem \ref{Tglivenkocantelli}), the second term converges to \(0\) as \(n\rightarrow\infty\). Hence, as \(\epsilon_1\) and \(\epsilon_2\) were arbitrary, we have asymptotic equicontinuity. 
\end{proof}

\subsection{Peeling and Least-Squares Regression with Fixed Design and Gaussian Noise}\label{SSpeelingappendix}
\begin{theorem}\label{Tleastsquares}
	Suppose that \(\varepsilon_1,...,\varepsilon_n\) are i.i.d. with Gaussian distribution with mean 0 and covariance operator \(Q\) (c.f. Definition \ref{Dgaussianhilbert} and Lemmas \ref{Lgaussianintegrable} and \ref{Lgaussiancovariance}), and that \(\textnormal{Tr}\,Q=1\). Further, suppose that
	\[J(\delta)\vcentcolon=4\int^\delta_0\sqrt{2H(u,\mathcal{B}_{2,P_n}(\delta),\lVert\cdot\rVert_{2,P_n})}du<\infty,\quad\text{for each }\delta>0\text{, and }\frac{J(\delta)}{\delta^2}\text{ is decreasing in }\delta,\]
	where \(\mathcal{B}_{2,P_n}(\delta)\vcentcolon=\{g\in\mathcal{G}:\left\lVert g\right\rVert_{2,P_n}\leq\delta\}\). Then for all \(t\geq\frac{3}{8}\) and all \(\delta_n\) satisfying
	\[\sqrt{n}\delta_n^2\geq8\left(J(\delta_n)+4\delta_n\sqrt{1+t}+\delta_n\sqrt{\frac{8}{3}t}\right),\]
	we have
	\[\mathbb{P}\left(\left\lVert\hat{g}_n-g_0\right\rVert_{2,P_n}>\delta_n\right)\leq\left(1+\frac{2}{e-1}\right)e^{-t}.\]
\end{theorem}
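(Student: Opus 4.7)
The plan follows the classical scheme of (i) a basic inequality from the optimality of $\hat g_n$, (ii) the peeling device, and (iii) Gaussian chaining via Proposition \ref{Pchaining}(ii), with Proposition \ref{Pconcentrationgaussian} used to control the chaining residual. Starting from $\hat{\mathcal{R}}_n(\hat g_n) \leq \hat{\mathcal{R}}_n(g_0)$ and substituting $Y_i = g_0(x_i) + \varepsilon_i$, the Hilbert-norm squares expand, the common $\|\varepsilon\|_{2,P_n}^2$ cancels, and one obtains the basic inequality
\[\|\hat g_n - g_0\|_{2,P_n}^2 \;\leq\; 2\,\langle\varepsilon,\hat g_n - g_0\rangle_{2,P_n},\]
with $\langle\varepsilon,g\rangle_{2,P_n}$ as in Proposition \ref{Pchaining}(ii).

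Next, I peel $\{\|\hat g_n - g_0\|_{2,P_n} > \delta_n\}$ into the disjoint shells $\mathcal{E}_s = \{2^s\delta_n < \|\hat g_n - g_0\|_{2,P_n} \leq 2^{s+1}\delta_n\}$ for $s \geq 0$. On $\mathcal{E}_s$ the basic inequality forces
\[\sup_{g \in \mathcal{G},\,\|g - g_0\|_{2,P_n} \leq 2^{s+1}\delta_n}\langle\varepsilon, g - g_0\rangle_{2,P_n} \;>\; \tfrac{1}{2}\,4^s\delta_n^2,\]
and I apply Proposition \ref{Pchaining}(ii) to the translated class $\{g - g_0 : g \in \mathcal{G},\,\|g - g_0\|_{2,P_n} \leq 2^{s+1}\delta_n\}$, which has empirical radius at most $2^{s+1}\delta_n$ and, by translation invariance, $\|\cdot\|_{2,P_n}$-entropy equal to that of $\mathcal{B}_{2,P_n}(2^{s+1}\delta_n)$. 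Passing $S \to \infty$, the chaining quantity $J_n$ of Proposition \ref{Pchaining}(ii) is dominated by $J(2^{s+1}\delta_n)$, which by the monotonicity of $J(\delta)/\delta^2$ is at most $4^{s+1}J(\delta_n)$; the residual $\langle\varepsilon,g-g^{S+1}\rangle_{2,P_n}$ is bounded via Cauchy--Schwarz by $\|\varepsilon\|_{2,P_n}\cdot\|g-g^{S+1}\|_{2,P_n}$, and Proposition \ref{Pconcentrationgaussian} applied to $\varepsilon$ regarded as a Gaussian element of $\mathcal{Y}^n$ (of covariance trace $n$) furnishes $\|\varepsilon\|_{2,P_n} \leq \sqrt{8t/3}$ with probability at least $1 - 2e^{-t}$, which is precisely the source of the $\delta_n\sqrt{8t/3}$ term in the hypothesis.

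On shell $s$ I therefore obtain, outside an event of probability $e^{-t_s} + 2e^{-t}$, an inequality of the shape $\tfrac{1}{2}\,4^s\delta_n^2 \leq 4^{s+1}J(\delta_n)/\sqrt n + 4\cdot 2^{s+1}\delta_n\sqrt{(1+t_s)/n} + 2^{s+1}\delta_n\sqrt{8t/(3n)}$. Dividing by $\tfrac{1}{2}\,4^s$ and choosing $t_s$ to grow linearly in $s$ (e.g.\ $t_s = t + s\log 4$) so that $\sum_s e^{-t_s}$ is a convergent geometric series and $\sqrt{1+t_s}/2^s \leq \sqrt{1+t}$ holds uniformly in $s$ (the hypothesis $t \geq 3/8$ appearing naturally here), the shell requirement reduces to the single inequality of the theorem; summing over shells and adding the noise-radius event yields the stated probability $(1 + 2/(e-1))e^{-t}$. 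The main obstacle is this last bookkeeping: the $4^s$ scaling of the chaining term matches the shell threshold $\tfrac{1}{2}\,4^s\delta_n^2$ perfectly, whereas the linear-in-$R_n$ correction from chaining scales only as $2^s$, so $t_s$ must be chosen with care to obtain simultaneously summability of the tail probabilities and uniformity of $\sqrt{1+t_s}/2^s$ in $s$.
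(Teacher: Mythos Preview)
Your proposal is correct and follows essentially the same route as the paper: basic inequality from optimality of $\hat g_n$, peeling into dyadic shells, Gaussian chaining via Proposition~\ref{Pchaining}(ii) on each shell with the monotonicity of $J(\delta)/\delta^2$ to transfer $J(2^{s+1}\delta_n)$ back to $J(\delta_n)$, and control of the chaining residual through Cauchy--Schwarz and Proposition~\ref{Pconcentrationgaussian}. The only cosmetic differences are that the paper keeps the noise-radius term inside each shell (leading to a sum $\sum_j\mathbb{P}(\|\varepsilon\|_{2,P_n}>2^j\sqrt{8t/3})$) rather than extracting it as a single event, and uses $t_j=t+j$ rather than $t_s=t+s\log 4$; your choice would yield a slightly different numerical constant than $1+2/(e-1)$, so be careful with the final bookkeeping if you want the exact constant stated.
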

\begin{proof}
	First, recall the notation
	\[\left\langle\varepsilon,g\right\rangle_{2,P_n}=\frac{1}{n}\sum^n_{i=1}\left\langle\varepsilon_i,g(X_i)\right\rangle_\mathcal{Y}\]
	from Proposition \ref{Pchaining}(ii), and note that we have the following basic inequality
	\[\left\lVert\hat{g}_n-g_0\right\rVert^2_{2,P_n}\leq2\left\langle\varepsilon,\hat{g}_n-g_0\right\rangle_{2,P_n},\tag{*}\]
	which follows from the fact that \(\hat{g}_n\) minimises \(\left\lVert Y_i-g(X_i)\right\rVert^2_{2,P_n}\) over \(g\in\mathcal{G}\), giving
	\[\left\lVert\varepsilon_i-(g_0-\hat{g}_n)\right\rVert^2_{2,P_n}=\left\lVert Y_i-\hat{g}_n(X_i)\right\rVert_{2,P_n}^2\leq\left\lVert Y_i-g_0(X_i)\right\rVert_{2,P_n}^2=\left\lVert\varepsilon_i\right\rVert_{2,P_n}^2.\]
	We use a technique called the \say{peeling device}, first introduced by \citet{vandegeer2000empirical}. See that
	\begin{alignat*}{2}
		\mathbb{P}&\left(\left\lVert\hat{g}_n-g_0\right\rVert_{2,P_n}>\delta_n\right)=\mathbb{P}\left(\bigcup_{j=1}^\infty\left\{2^{j-1}\delta_n<\left\lVert\hat{g}_n-g_0\right\rVert_{2,P_n}\leq2^j\delta_n\right\}\right)\\
		&\leq\sum^\infty_{j=1}\mathbb{P}\left(2^{j-1}\delta_n<\left\lVert\hat{g}_n-g_0\right\rVert_{2,P_n}\leq2^j\delta_n\right)\qquad\text{by the union bound}\\
		&=\sum^\infty_{j=1}\mathbb{P}\left(\left\{2^{j-1}\delta_n<\left\lVert\hat{g}_n-g_0\right\rVert_{2,P_n}\right\}\bigcap\left\{\hat{g}_n-g_0\in\mathcal{B}_n(2^j\delta_n)\right\}\right)\\
		&\leq\sum^\infty_{j=1}\mathbb{P}\left(\left\{\left(2^{j-1}\delta_n\right)^2<2\left\langle\varepsilon,\hat{g}_n-g_0\right\rangle_{2,P_n}\right\}\bigcap\left\{\hat{g}_n-g_0\in\mathcal{B}_n(2^j\delta_n)\right\}\right)\qquad\text{by (*)}\\
		&\leq\sum^\infty_{j=1}\mathbb{P}\left(\sup_{g\in\mathcal{B}_n(2^j\delta_n)}2\left\langle\varepsilon,g\right\rangle_{2,P_n}>\left(2^{j-1}\delta_n\right)^2\right)\\
		&=\sum^\infty_{j=1}\mathbb{P}\left(\sup_{g\in\mathcal{B}_n(2^j\delta_n)}\left\langle\varepsilon,g\right\rangle_{2,P_n}>\frac{1}{8}\left(2^j\delta_n\right)^2\right).
	\end{alignat*}
	Now, applying the hypothesis on \(\delta_n\), we see that, for each \(j\),
	\begin{alignat*}{2}
		\frac{1}{8}\left(2^j\delta_n\right)^2&\geq\frac{(2^j)^2J(\delta_n)}{\sqrt{n}}+4(2^j)^2\delta_n\sqrt{\frac{1+t}{n}}+\frac{\sqrt{\frac{8}{3}t}(2^j)^2\delta_n}{\sqrt{n}}\\
		&\geq\frac{J(2^j\delta_n)}{\sqrt{n}}+4(2^j\delta_n)\sqrt{\frac{1+t+j}{n}}+\frac{\sqrt{\frac{8}{3}t}(2^j)^2\delta_n}{\sqrt{n}}\\
		&\geq\frac{J_n}{\sqrt{n}}+4(2^j\delta_n)\sqrt{\frac{1+t+j}{n}}+\frac{\sqrt{\frac{8}{3}t}(2^j)^2\delta_n}{\sqrt{n}}
	\end{alignat*}
	where we used the fact that \(\frac{J(\delta)}{\delta^2}\) is decreasing in \(\delta\) and \(\sqrt{1+t+j}\leq2^j\sqrt{1+t}\), and \(J_n\) is defined as in Proposition \ref{Pchaining} with \(\mathcal{G}=\mathcal{B}_n(2^j\delta_n)\) and \(R_n=2^j\delta_n\). On the other hand, we can write, for any \(S\in\mathbb{N}\),
	\[\left\langle\varepsilon,g\right\rangle_{2,P_n}=\left\langle\varepsilon,g-g^{S+1}\right\rangle_{2,P_n}+\sum^S_{s=0}\left\langle\varepsilon,g^{s+1}-g^s\right\rangle_{2,P_n},\]
	using the chaining notation in Section \ref{SSchaining}. Hence,
	\begin{alignat*}{2}
		&\mathbb{P}\left(\left\lVert\hat{g}_n-g_0\right\rVert_{2,P_n}>\delta_n\right)\\
		&\leq\sum^\infty_{j=1}\mathbb{P}\left(\sup_{g\in\mathcal{B}_n(2^j\delta_n)}\left\langle\varepsilon,g-g^{S+1}\right\rangle_{2,P_n}>\frac{\sqrt{\frac{8}{3}t}(2^j)^2\delta_n}{\sqrt{n}}\right)\\
		&\quad+\sum^\infty_{j=1}\mathbb{P}\left(\sup_{g\in\mathcal{B}_n(2^j\delta_n)}\sum^S_{s=0}\left\langle\varepsilon,g^{s+1}-g^s\right\rangle_{2,P_n}>\frac{J_n}{\sqrt{n}}+4(2^j\delta_n)\sqrt{\frac{1+t+j}{n}}\right)\\
		&\leq\sum^\infty_{j=1}\mathbb{P}\left(\frac{2^j}{2^{S+1}}\delta_n\left\lVert\varepsilon\right\rVert_{2,P_n}>\frac{\sqrt{\frac{8}{3}t}2^{2j}\delta_n}{\sqrt{n}}\right)+\sum^\infty_{j=1}e^{-(t+j)}\enspace\text{by Proposition \ref{Pchaining}(ii)}\\
		&=\sum^\infty_{j=1}\mathbb{P}\left(\left\lVert\varepsilon\right\rVert_{2,P_n}>2^j\sqrt{\frac{8}{3}t}\right)+\frac{1}{e-1}e^{-t}\enspace\text{letting }S\text{ such that }\sqrt{n}\leq2^{S+1}\\
		&\leq\sum^\infty_{j=1}\mathbb{P}\left(\left\lVert\varepsilon\right\rVert_{2,P_n}>2^j+\sqrt{\frac{8}{3}t}\right)+\frac{1}{e-1}e^{-t}\enspace\text{since }t\geq\frac{3}{8}\\
		&\leq\sum^\infty_{j=1}\mathbb{P}\left(\frac{1}{n}\sum^n_{i=1}\left\lVert\varepsilon_i\right\rVert_\mathcal{Y}^2>2^{2j}+\frac{8}{3}t\right)+\frac{1}{e-1}e^{-t}\\
		&\leq\sum^\infty_{j=1}e^{-\frac{3}{8}2^{2j}-t}\mathbb{E}\left[e^{\frac{3}{8}\frac{1}{n}\sum^n_{i=1}\left\lVert\varepsilon_i\right\rVert^2_\mathcal{Y}}\right]+\frac{1}{e-1}e^{-t}\qquad\text{by Markov's inequality}\\
		&\leq\sum^\infty_{j=1}e^{-\frac{3}{8}2^{2j}-t}\prod^n_{i=1}\mathbb{E}\left[e^{\frac{3}{8}\frac{1}{n}\left\lVert\varepsilon_i\right\rVert^2_\mathcal{Y}}\right]+\frac{1}{e-1}e^{-t}\qquad\text{by independence}\\
		&\leq\sum^\infty_{j=1}e^{-\frac{3}{8}2^{2j}-t}\mathbb{E}\left[e^{\frac{3}{8}\left\lVert\varepsilon_1\right\rVert^2_\mathcal{Y}}\right]+\frac{1}{e-1}e^{-t}\qquad\text{by Jensen's inequality}\\
		&\leq\sum^\infty_{j=1}e^{-\frac{3}{8}2^{2j}-t}+\frac{1}{e-1}e^{-t}\qquad\text{by Proposition \ref{Pconcentrationgaussian}}\\
		&\leq e^{-t}\left(e^{-\frac{3}{4}}-e^{-1}+\sum^\infty_{j=1}e^{-j}+\frac{1}{e-1}\right)\\
		&\leq e^{-t}\left(1+\frac{2}{e-1}\right).
	\end{alignat*}
\end{proof}
\subsection{Rademacher Complexities}\label{SSrademacherappendix}
In this Section, we discuss the extension of the concept of Rademacher complexities to classes of vector-valued functions in more depth (c.f. Section \ref{SSrademacher}). We first give the definition of Rademacher complexities of classes of real-valued functions. 
\begin{definition}[{\citet[Definition 2]{bartlett2002rademacher}}]\label{Drademacherreal}
	Suppose \(\mathcal{G}\) is a class of real-valued functions \(\mathcal{X}\rightarrow\mathbb{R}\). Then the \textit{empirical (or conditional) Rademacher complexity} of \(\mathcal{G}\) is defined as
	\[\hat{\mathfrak{R}}_n(\mathcal{G})=\mathbb{E}\left[\sup_{g\in\mathcal{G}}\left\lvert\frac{1}{n}\sum^n_{i=1}\sigma_ig(X_i)\right\rvert\mid X_1,...,X_n\right],\]
	where the expectation is taken with respect to the Rademacher variables \(\{\sigma_i\}_{i=1}^n\). The Rademacher complexity of \(\mathcal{G}\) is defined as
	\[\mathfrak{R}_n(G)=\mathbb{E}\left[\hat{\mathfrak{R}}_n(G)\right].\]
\end{definition}
Since this seminal definition, it was realised that the absolute value around \(\frac{1}{n}\sum^n_{i=1}\sigma_ig(X_i)\) was unnecessary (see, for example, \citet[paragraph between Corollary 4 and Lemma 5]{meir2003generalization} or \citet[last paragraph of Section 1]{maurer2016vector}). However, in order to facilitate the following direct extension to classes of vector-valued functions, we retain the absolute value sign. 
\begin{definition}\label{Drademachervector}
	Suppose \(\mathcal{G}\) is a class of \(\mathcal{X}\rightarrow\mathcal{Y}\) functions. Then the empirical (or conditional) Rademacher complexity of \(\mathcal{G}\) is defined as
	\[\hat{\mathfrak{R}}_n(\mathcal{G})=\mathbb{E}\left[\sup_{g\in\mathcal{G}}\left\lVert\frac{1}{n}\sum^n_{i=1}\sigma_ig(X_i)\right\rVert_\mathcal{Y}\mid X_1,...,X_n\right]=\mathbb{E}\left[\left\lVert P^\sigma_ng\right\rVert_\mathcal{G}\mid X_1,...,X_n\right],\]
	using the notation from Section \ref{Sempiricalprocesstheory}. The Rademacher complexity of \(\mathcal{G}\) is defined as
	\[\mathfrak{R}_n(G)=\mathbb{E}\left[\hat{\mathfrak{R}}_n(G)\right].\]
\end{definition}
Note that our definition is different to the \say{vector-valued Rademacher complexity} already in use in the literature, mostly for \(\mathcal{Y}\) being a finite-dimensional Euclidean space (\citeauthor{yousefi2018local}, \citeyear{yousefi2018local}, Definition 1; \citeauthor{li2019learning}, \citeyear{li2019learning}, Definition 3), but also for \(\mathcal{Y}=l_2\), the space of square-summable sequences \citep{maurer2016vector}. These papers define the \say{Rademacher complexity} of vector-valued function classes not as in Definition \ref{Drademachervector}, where we have one Rademacher variable \(\sigma_i\) per sample \(X_i\), but introduce a Rademacher variable for every coordinate of \(\mathcal{Y}\). The resulting quantity looks something like
\[\mathbb{E}\left[\sup_{g\in\mathcal{G}}\frac{1}{n}\sum^n_{i=1}\sum_k\sigma^k_ig_k(X_i)\mid X_1,...,X_n\right],\]
where \(g_k\) is the \(k^\text{th}\) coordinate of \(g\) with respect to a basis, and \(\{\sigma^k_i\}_{i,k}\) are Rademacher random variables. For convenience, in what follows, we call this the \say{coordinate-wise Rademacher complexity}, and denote it by \(\hat{\mathfrak{R}}^\text{coord}_n(\mathcal{G})\). 

While we recognise the usefulness of this definition, especially thanks to the contraction result shown in \citet{maurer2016vector}, \citet{cortes2016structured}, \citet{zatarain2019vector} and \citet{foster2019vector}, for several reasons, we insist on using Definition \ref{Drademachervector}. Firstly, as it is clear from the definition, and as admitted by \citet[paragraph just above Conjecture 2]{maurer2016vector}, Definition \ref{Drademachervector} is a more natural definition in view of the real-valued Rademacher complexity. Moreover, our work in Section \ref{SSsymmetrisation} uses the empirical symmetrised measure \(\frac{1}{n}\sum^n_{i=1}\sigma_i\delta_{X_i}\) to good effect and in a way that directly generalises from the real-valued case, which suggests that Definition \ref{Drademachervector} is natural. Finally, and perhaps most critically, the coordinate-wise Rademacher complexity is not independent of the choice of the basis of \(\mathcal{Y}\). For a simple counterexample, let \(\mathcal{X}=\mathcal{Y}=\mathbb{R}^2\), and \(\mathcal{G}=\{g_1,g_2\}\), where \(g_1\) is the orthogonal projection onto the line \(y=x\), and \(g_2\) is the orthogonal projection onto the line \(y=-x\). This means that, letting \(X_1=\begin{pmatrix}1\\0\end{pmatrix}\) and \(X_2=\begin{pmatrix}0\\1\end{pmatrix}\), we have
\[g_1(X_1)=\begin{pmatrix}\frac{1}{2}\\\frac{1}{2}\end{pmatrix},\quad g_1(X_2)=\begin{pmatrix}\frac{1}{2}\\\frac{1}{2}\end{pmatrix},\quad g_2(X_1)=\begin{pmatrix}\frac{1}{2}\\-\frac{1}{2}\end{pmatrix},\quad g_2(X_2)=\begin{pmatrix}-\frac{1}{2}\\\frac{1}{2}\end{pmatrix}.\]
Then the coordinate-wise Rademacher complexity of \(\mathcal{G}\) with respect to the standard basis \(\{X_1,X_2\}\) is
\begin{alignat*}{2}
	\hat{\mathfrak{R}}_n^\text{coord}(\mathcal{G})&=\mathbb{E}\left[\sup_{g\in\mathcal{G}}\sum^2_{i=1}\sum^2_{k=1}\sigma^k_ig_k(X_i)\right]\\
	&=\mathbb{E}\left[\sup_{g\in\mathcal{G}}\left\{\sigma^1_1\left(g(X_1)\right)_1+\sigma^2_1\left(g(X_1)\right)_2+\sigma^1_2\left(g(X_2)\right)_1+\sigma^2_2\left(g(X_2)\right)_2\right\}\right]\\
	&=\mathbb{E}\left[\frac{\sigma^1_1}{2}+\frac{\sigma^2_2}{2}+\sup_{g\in\mathcal{G}}\left\{\sigma^2_1\left(g(X_1)\right)_2+\sigma^1_2\left(g(X_2)\right)_1\right\}\right]\\
	&=\sup_{g\in\mathcal{G}}\left\{\left(g(X_1)\right)_2+\left(g(X_2)\right)_1\right\}+\sup_{g\in\mathcal{G}}\left\{-\left(g(X_1)\right)_2+\left(g(X_2)\right)_1\right\}\\
	&\qquad+\sup_{g\in\mathcal{G}}\left\{\left(g(X_1)\right)_2-\left(g(X_2)\right)_1\right\}+\sup_{g\in\mathcal{G}}\left\{-\left(g(X_1)\right)_2-\left(g(X_2)\right)_1\right\}\\
	&=1+0+0+1\\
	&=2.
\end{alignat*}
But if we use the orthonormal basis \(\left\{\begin{pmatrix}\frac{1}{\sqrt{2}}\\\frac{1}{\sqrt{2}}\end{pmatrix},\begin{pmatrix}-\frac{1}{\sqrt{2}}\\\frac{1}{\sqrt{2}}\end{pmatrix}\right\}\), then we have
\begin{alignat*}{2}
	&(g_1(X_1))_1=\frac{1}{\sqrt{2}},\quad(g_1(X_1))_2=0,\quad(g_1(X_2))_1=\frac{1}{\sqrt{2}}\quad(g_1(X_2))_2=0\\
	&(g_2(X_1))_1=0,\qquad(g_2(X_1))_2=-\frac{1}{\sqrt{2}},\qquad(g_2(X_2))_1=0,\quad(g_2(X_2))_2=\frac{1}{\sqrt{2}}.
\end{alignat*}
So the complexity with respect to the standard basis \(\{X_1,X_2\}\) is
\begin{alignat*}{2}
	&\mathbb{E}\left[\sup_{g\in\mathcal{G}}\sum^2_{i=1}\sum^2_{k=1}\sigma^k_ig_k(X_i)\right]\\
	&=\mathbb{E}\left[\sup_{g\in\mathcal{G}}\left\{\sigma^1_1\left(g(X_1)\right)_1+\sigma^2_1\left(g(X_1)\right)_2+\sigma^1_2\left(g(X_2)\right)_1+\sigma^2_2\left(g(X_2)\right)_2\right\}\right]\\
	&=\sup_{g\in\mathcal{G}}\left\{\left(g(X_1)\right)_1+\left(g(X_1)\right)_2+\left(g(X_2)\right)_1+\left(g(X_2)\right)_2\right\}\\
	&\qquad+\sup_{g\in\mathcal{G}}\left\{\left(g(X_1)\right)_1+\left(g(X_1)\right)_2+\left(g(X_2)\right)_1-\left(g(X_2)\right)_2\right\}\\
	&\qquad+\sup_{g\in\mathcal{G}}\left\{\left(g(X_1)\right)_1+\left(g(X_1)\right)_2-\left(g(X_2)\right)_1+\left(g(X_2)\right)_2\right\}\\
	&\qquad+\sup_{g\in\mathcal{G}}\left\{\left(g(X_1)\right)_1-\left(g(X_1)\right)_2+\left(g(X_2)\right)_1+\left(g(X_2)\right)_2\right\}\\
	&\qquad+\sup_{g\in\mathcal{G}}\left\{-\left(g(X_1)\right)_1+\left(g(X_1)\right)_2+\left(g(X_2)\right)_1+\left(g(X_2)\right)_2\right\}\\
	&\qquad+\sup_{g\in\mathcal{G}}\left\{\left(g(X_1)\right)_1+\left(g(X_1)\right)_2-\left(g(X_2)\right)_1-\left(g(X_2)\right)_2\right\}\\
	&\qquad+\sup_{g\in\mathcal{G}}\left\{\left(g(X_1)\right)_1-\left(g(X_1)\right)_2+\left(g(X_2)\right)_1-\left(g(X_2)\right)_2\right\}\\
	&\qquad+\sup_{g\in\mathcal{G}}\left\{-\left(g(X_1)\right)_1+\left(g(X_1)\right)_2+\left(g(X_2)\right)_1-\left(g(X_2)\right)_2\right\}\\
	&\qquad+\sup_{g\in\mathcal{G}}\left\{\left(g(X_1)\right)_1-\left(g(X_1)\right)_2-\left(g(X_2)\right)_1+\left(g(X_2)\right)_2\right\}\\
	&\qquad+\sup_{g\in\mathcal{G}}\left\{-\left(g(X_1)\right)_1+\left(g(X_1)\right)_2-\left(g(X_2)\right)_1+\left(g(X_2)\right)_2\right\}\\
	&\qquad+\sup_{g\in\mathcal{G}}\left\{-\left(g(X_1)\right)_1-\left(g(X_1)\right)_2+\left(g(X_2)\right)_1+\left(g(X_2)\right)_2\right\}\\
	&\qquad+\sup_{g\in\mathcal{G}}\left\{\left(g(X_1)\right)_1-\left(g(X_1)\right)_2-\left(g(X_2)\right)_1-\left(g(X_2)\right)_2\right\}\\
	&\qquad+\sup_{g\in\mathcal{G}}\left\{-\left(g(X_1)\right)_1+\left(g(X_1)\right)_2-\left(g(X_2)\right)_1-\left(g(X_2)\right)_2\right\}\\
	&\qquad+\sup_{g\in\mathcal{G}}\left\{-\left(g(X_1)\right)_1-\left(g(X_1)\right)_2+\left(g(X_2)\right)_1-\left(g(X_2)\right)_2\right\}\\
	&\qquad+\sup_{g\in\mathcal{G}}\left\{-\left(g(X_1)\right)_1-\left(g(X_1)\right)_2-\left(g(X_2)\right)_1+\left(g(X_2)\right)_2\right\}\\
	&\qquad+\sup_{g\in\mathcal{G}}\left\{-\left(g(X_1)\right)_1-\left(g(X_1)\right)_2-\left(g(X_2)\right)_1-\left(g(X_2)\right)_2\right\}\\
	&=\sqrt{2}+\sqrt{2}+0+\sqrt{2}+0+0+\sqrt{2}+0+\sqrt{2}+0+\sqrt{2}+0-\sqrt{2}+0+0+0\\
	&=5\sqrt{2}.
\end{alignat*}
Hence, we see that the coordinate-wise Rademacher complexity is not independent of the chosen orthonormal basis. We deem this to be a critical issue with the coordinate-wise Rademacher complexity, because it is intuitively clear that the \say{complexity} of a function class should not depend on the choice of the basis of the output space. This is especially pertinent in our context, considering that our interest is primarily in the case when the output space \(\mathcal{Y}\) is infinite-dimensional in which there may be no \say{standard basis}. 

One of the main ways of bounding the Rademacher complexity of real-valued function classes is to use the entropy. We show that the Rademacher complexity of vector-valued function classes \(\mathcal{G}\) can be bounded using the entropy, a vector-valued analogue of \citet[p.338, Lemma 27.4]{shalev2014understanding}. We use the chaining notation in Section \ref{SSchaining}, and also use Proposition \ref{Phoeffdinghilbertexpectation}, the expectation form of vector-valued Hoeffding's inequality.
\begin{theorem}\label{Trademacherboundentropy}
	Let \(S\in\mathbb{N}\) be any (large) integer. The empirical Rademacher complexity is bounded as
	\[\hat{\mathfrak{R}}_n(\mathcal{G})\leq2^{-(S+1)}R_n+\frac{2}{\sqrt{n}}J_n,\]
	where we recall that \(R_n=\sup_{g\in\mathcal{G}}\lVert g\rVert_{2,P_n}\) is the empirical radius and \(J_n=\sum^S_{s=0}2^{-s}R_n\sqrt{2H_{s+1}}\) is the uniform entropy bound. 
\end{theorem}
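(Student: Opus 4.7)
The plan is to run the chaining decomposition from the proof of Proposition \ref{Pchaining} directly on the empirical Rademacher complexity $\hat{\mathfrak{R}}_n(\mathcal{G})$, using the expectation form of the vector-valued Hoeffding inequality (Proposition \ref{Phoeffdinghilbertexpectation}) in place of its probability form. For each $g \in \mathcal{G}$, the chain $g^0 = 0, g^1, \dots, g^{S+1}$ constructed there yields the telescoping identity $P_n^\sigma g = P_n^\sigma(g - g^{S+1}) + \sum_{s=0}^S P_n^\sigma(g^{s+1} - g^s)$, so the triangle inequality and a supremum over $g$ split the quantity inside $\hat{\mathfrak{R}}_n(\mathcal{G})$ into an approximation term and a sum over chain levels of maxima over at most $N_{s+1}$ pairs. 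The approximation term is deterministically controlled by $\lVert P_n^\sigma(g - g^{S+1})\rVert_\mathcal{Y} \le \lVert g - g^{S+1}\rVert_{1,P_n} \le \lVert g - g^{S+1}\rVert_{2,P_n} \le 2^{-(S+1)} R_n$, yielding the first summand in the claimed bound after taking conditional expectation in $\sigma$.

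For each chain level $s$ and each $k$, the plan is to apply Proposition \ref{Phoeffdinghilbertexpectation} conditionally on $X_1, \dots, X_n$ to the zero-mean summands $Y_i = \frac{\sigma_i}{n}(g^{s+1}_k - g^{s+1,s}_k)(X_i)$, which satisfy $\sum_{i=1}^n \lVert Y_i \rVert_\mathcal{Y}^2 = \frac{1}{n}\lVert g^{s+1}_k - g^{s+1,s}_k \rVert_{2,P_n}^2 \le (2^{-s} R_n)^2/n$ by definition of the cover. This gives, for every $\lambda > 0$,
\[\mathbb{E}\bigl[\cosh\bigl(\lambda \lVert P_n^\sigma(g^{s+1}_k - g^{s+1,s}_k) \rVert_\mathcal{Y}\bigr) \mid X_1, \dots, X_n\bigr] \le \exp\bigl(\lambda^2 (2^{-s} R_n)^2 / n\bigr).\]
Combining $e^u \le 2\cosh u$ for $u \ge 0$ with the standard Chernoff-type estimate $\exp(\lambda \mathbb{E}[\max_k Z_k]) \le \sum_k \mathbb{E}[e^{\lambda Z_k}]$ over the $N_{s+1}$ values of $k$ and optimizing in $\lambda$ will produce the chain-level bound
\[\mathbb{E}\bigl[\max_k \lVert P_n^\sigma(g^{s+1}_k - g^{s+1,s}_k) \rVert_\mathcal{Y} \mid X_1, \dots, X_n\bigr] \le \frac{2 \cdot 2^{-s} R_n}{\sqrt{n}}\sqrt{H_{s+1} + \log 2}.\]
Whenever $N_{s+1} \ge 2$ one has $H_{s+1} \ge \log 2$, so the right-hand side is at most $\frac{2}{\sqrt{n}}\cdot 2^{-s}R_n \sqrt{2 H_{s+1}}$; when $N_{s+1} = 1$ the maximum is identically zero. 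Summing over $s = 0, \dots, S$ then yields $\frac{2}{\sqrt n} J_n$, and adding the approximation term completes the argument.

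The hard part will be this last step, namely extracting the constant $\sqrt{2 H_{s+1}}$ rather than something larger. Unlike the real-valued case, Proposition \ref{Phoeffdinghilbertexpectation} controls only the symmetric exponential moment $\mathbb{E}[\cosh(\lambda \lVert S_n \rVert_\mathcal{Y})]$, so one pays a factor of $2$ via $e^u \le 2\cosh u$, and the resulting $\log 2$ must be absorbed into $H_{s+1}$ using the boundary observation about chain levels with $N_{s+1}=1$. Modulo this adjustment, the argument is a direct transcription of the real-valued Dudley chaining into the Hilbert-valued setting.
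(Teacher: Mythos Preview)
Your proposal is correct and follows essentially the same argument as the paper: chain via the telescoping $P_n^\sigma g = P_n^\sigma(g-g^{S+1})+\sum_s P_n^\sigma(g^{s+1}-g^s)$, bound the approximation term deterministically by $2^{-(S+1)}R_n$, and for each level use Jensen's inequality together with $e^u\le 2\cosh u$ and Proposition~\ref{Phoeffdinghilbertexpectation} to control $\mathbb{E}[\max_k\lVert\cdot\rVert_\mathcal{Y}\mid\mathcal{F}_n]$, then absorb the resulting $\log 2$ into $H_{s+1}$ and optimise in $\lambda$. The only cosmetic difference is that the paper absorbs $\log 2$ before optimising in $\lambda_s$ (its step (d)) whereas you optimise first and absorb after; your explicit handling of the degenerate case $N_{s+1}=1$ is slightly more careful than the paper, which simply asserts $H_{s+1}\ge\log 2$---to make your claim that the maximum is identically zero literally true, just note that one may choose the singleton covers consistently so that $g^{s}_1=g^{s+1}_1$ whenever $N_{s+1}=N_s=1$.
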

\begin{proof}
	See that
	\begin{alignat*}{2}
		\hat{\mathfrak{R}}_n(\mathcal{G})&=\mathbb{E}\left[\sup_{g\in\mathcal{G}}\left\lVert\frac{1}{n}\sum^n_{i=1}\sigma_ig(X_i)\right\rVert_\mathcal{Y}\mid\mathcal{F}_n\right]\\
		&=\mathbb{E}\left[\sup_{g\in\mathcal{G}}\left\lVert P^\sigma_ng\right\rVert_\mathcal{Y}\mid\mathcal{F}_n\right]\\
		&=\mathbb{E}\left[\sup_{g\in\mathcal{G}}\left\lVert P^\sigma_n\left(g-g^{S+1}\right)+\sum^S_{s=0}P^\sigma_n\left(g^{s+1}-g^s\right)\right\rVert_\mathcal{Y}\mid\mathcal{F}_n\right]\\
		&\leq\mathbb{E}\left[\sup_{g\in\mathcal{G}}\left\lVert P^\sigma_n\left(g-g^{S+1}\right)\right\rVert_\mathcal{Y}\mid\mathcal{F}_n\right]+\mathbb{E}\left[\sup_{g\in\mathcal{G}}\left\lVert\sum^S_{s=0}P^\sigma_n\left(g^{s+1}-g^s\right)\right\rVert_\mathcal{Y}\mid\mathcal{F}_n\right]\\
		&\leq\sup_{g\in\mathcal{G}}\frac{1}{n}\sum^n_{i=1}\left\lVert g(X_i)-g^{S+1}(X_i)\right\rVert_\mathcal{Y}+\sum^S_{s=0}\mathbb{E}\left[\sup_{g\in\mathcal{G}}\left\lVert P^\sigma_n\left(g^{s+1}-g^s\right)\right\rVert_\mathcal{Y}\mid\mathcal{F}_n\right]\\
		&\leq\sup_{g\in\mathcal{G}}\left\lVert g-g^{S+1}\right\rVert_{2,P_n}+\sum^S_{s=0}\mathbb{E}\left[\max_{k\in\{1,...,N_{s+1}\}}\left\lVert P^\sigma_n\left(g^{s+1}_k-g^{s+1,s}_k\right)\right\rVert_\mathcal{Y}\mid\mathcal{F}_n\right]\\
		&\leq2^{-(S+1)}R_n+\sum^S_{s=0}\frac{1}{\lambda_s}\log\left(\mathbb{E}\left[\sum^{N_{s+1}}_{k=1}e^{\lambda_s\left\lVert P^\sigma_n\left(g^{s+1}_k-g^{s+1,s}_k\right)\right\rVert_\mathcal{Y}}\mid\mathcal{F}_n\right]\right)\qquad(a)\\
		&\leq2^{-(S+1)}R_n+\sum^S_{s=0}\frac{1}{\lambda_s}\log\left(\sum^{N_{s+1}}_{k=1}\mathbb{E}\left[2\cosh\left(\lambda_s\left\lVert P^\sigma_n\left(g^{s+1}_k-g^{s+1,s}_k\right)\right\rVert_\mathcal{Y}\right)\mid\mathcal{F}_n\right]\right)(b)\\
		&\leq2^{-(S+1)}R_n+\sum^S_{s=0}\frac{1}{\lambda_s}\log\left(2\sum_{k=1}^{N_{s+1}}e^{\frac{\lambda_s^2}{n}(2^{-s}R_n)^2}\right)\qquad(c)\\
		&=2^{-(S+1)}R_n+\sum^S_{s=0}\frac{1}{\lambda_s}\log\left(2N_{s+1}e^{\frac{\lambda_s^2}{n}(2^{-s}R_n)^2}\right)\\
		&=2^{-(S+1)}R_n+\sum^S_{s=0}\frac{1}{\lambda_s}\left(H_{s+1}+\log2\right)+\frac{\lambda_s}{n}\sum^S_{s=0}(2^{-s}R_n)^2\\
		&\leq2^{-(S+1)}R_n+\sum^S_{s=0}\frac{1}{\lambda_s}2H_{s+1}+\frac{\lambda_s}{n}\sum^S_{s=0}(2^{-s}R_n)^2\qquad(d)\\
		&=2^{-(S+1)}R_n+\frac{2}{\sqrt{n}}\sum^S_{s=0}2^{-s}R_n\sqrt{2H_{s+1}}\qquad(e)\\
		&=2^{-(S+1)}R_n+\frac{2}{\sqrt{n}}J_n
	\end{alignat*}
	where, in (a), we used Jensen's inequality and the fact that the sum of positive numbers is greater than their maximum; in (b), we used the basic fact \(e^x\leq2\cosh x\); in (c), we used Proposition \ref{Phoeffdinghilbertexpectation}; in (d), we used the fact that \(H_{s+1}\geq\log2\); and in (e), we let
	\[\lambda_s=\frac{\sqrt{2nH_{s+1}}}{2^{-s}R_n}.\]
\end{proof}

When the Rademacher complexity is used in empirical risk minimisation for real-valued function classes \(\mathcal{F}\), what we end up using is not the Rademacher complexity \(\mathfrak{R}_n(\mathcal{F})\) of the function class itself, but that of the composition of the loss with the function class. The same is true for vector-valued empirical risk minimisation problems. More precisely, suppose we have a loss function \(\mathcal{L}:\mathcal{Y}\times\mathcal{Y}\rightarrow\mathbb{R}\), and we denote by \(\hat{g}_n\) the solution of the following empirical risk minimisation problem:
\[\hat{g}_n=\argmin_{g\in\mathcal{G}}\frac{1}{n}\sum^n_{i=1}\mathcal{L}(Y_i,g(X_i))=\argmin_{g\in\mathcal{G}}\hat{\mathcal{R}}_n(g).\]
Denote by \(g^*\) the minimiser of the population risk:
\[g^*\vcentcolon=\argmin_{g\in\mathcal{G}}\mathbb{E}\left[\mathcal{L}(Y,g(X))\right]=\argmin_{g\in\mathcal{G}}\mathcal{R}(g).\]
We want to know how fast \(\mathcal{R}(\hat{g}_n)\) converges to the minimal risk \(\mathcal{R}(g^*)\) as the sample size \(n\) increases. Here, actually, the standard result concerning Rademacher complexities applies directly -- we will quote the following result. 
\begin{theorem}[{\citet[p.328, Theorem 26.5]{shalev2014understanding}}]\label{Tshalev}
	Assume that for all \((x,y)\in\mathcal{X}\times\mathcal{Y}\) and \(g\in\mathcal{G}\), we have \(\lvert\mathcal{L}(y,g(x))\rvert\leq c\) for some constant \(c>0\). Then with probability at least \(1-\delta\), we have
	\[\mathcal{R}(\hat{g}_n)-\mathcal{R}(g^*)\leq2\mathfrak{R}_n(\mathcal{L}\circ\mathcal{G})+5c\sqrt{\frac{2\log\left(\frac{8}{\delta}\right)}{n}}\]
	where we used the notation \(\mathcal{L}\circ\mathcal{G}\) for the class of functions \(\mathcal{X}\times\mathcal{Y}\rightarrow\mathbb{R}\) defined as
	\[\mathcal{L}\circ\mathcal{G}\vcentcolon=\left\{(x,y)\mapsto\mathcal{L}(y,g(x)):g\in\mathcal{G}\right\}.\]
\end{theorem}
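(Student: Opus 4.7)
The plan is to follow the standard decomposition-plus-concentration strategy for excess risk. The key observation is that, although $\mathcal{G}$ takes values in the Hilbert space $\mathcal{Y}$, the composition $\mathcal{L}\circ\mathcal{G}$ is a class of real-valued functions on $\mathcal{X}\times\mathcal{Y}$, so the entire argument reduces to classical real-valued empirical process theory and none of the vector-valued machinery of Appendix~\ref{SconcentrationHilbfull} is needed. I would begin with the decomposition
\[
\mathcal{R}(\hat g_n)-\mathcal{R}(g^*)=\left[\mathcal{R}(\hat g_n)-\hat{\mathcal R}_n(\hat g_n)\right]+\left[\hat{\mathcal R}_n(\hat g_n)-\hat{\mathcal R}_n(g^*)\right]+\left[\hat{\mathcal R}_n(g^*)-\mathcal{R}(g^*)\right],
\]
discard the middle bracket (which is non-positive by the definition of $\hat g_n$ as the empirical risk minimiser), and upper-bound the first bracket by $\sup_{g\in\mathcal{G}}\left[\mathcal{R}(g)-\hat{\mathcal R}_n(g)\right]$.

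For the last bracket, the summands $\mathcal{L}(Y_i,g^*(X_i))-\mathcal{R}(g^*)$ are i.i.d., mean zero and bounded in absolute value by $2c$, so Hoeffding's inequality for real random variables (Proposition~\ref{Phoeffdingreal}) gives a deviation of order $c\sqrt{\log(1/\delta_1)/n}$ with probability at least $1-\delta_1$. For the supremum I would first apply McDiarmid's bounded-differences inequality: replacing any single $(X_i,Y_i)$ pair changes $\hat{\mathcal R}_n(g)$ by at most $2c/n$ uniformly in $g$, which yields concentration about the expectation of order $c\sqrt{\log(1/\delta_2)/n}$. The expectation $\mathbb{E}[\sup_{g\in\mathcal{G}}(\mathcal{R}(g)-\hat{\mathcal R}_n(g))]$ is then dominated by $2\mathfrak{R}_n(\mathcal{L}\circ\mathcal{G})$ via the standard symmetrisation argument, a direct real-valued analogue of Lemma~\ref{Lsymmetrisationmean} applied to the class $\mathcal{L}\circ\mathcal{G}$ (the absolute value in Definition~\ref{Drademacherreal} is harmlessly absorbed into the supremum). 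A union bound with $\delta_1,\delta_2$ chosen proportional to $\delta$ combines the three pieces and produces the claimed tail, with the specific constants $5$ and $8$ arising from the constant bookkeeping.

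There is no genuine obstacle in this argument, precisely because the loss scalarises everything: the proof is purely classical and could be lifted almost verbatim from any textbook treatment such as~\citet{shalev2014understanding}, which is why the statement is cited rather than re-proved. Where the vector-valued nature of $\mathcal{Y}$ does bite is in the downstream task of actually estimating $\mathfrak{R}_n(\mathcal{L}\circ\mathcal{G})$, for which the naive route through $\hat{\mathfrak R}_n(\mathcal{G})$ is blocked by the failure of contraction noted in Section~\ref{SSrademacher}; the workaround promoted in this paper goes through entropy bounds on $\mathcal{G}$ as in Theorems~\ref{Tentropyassouad}--\ref{Tentropyexponential}, applied to $\mathcal{L}\circ\mathcal{G}$ via the Lipschitz entropy contraction of Lemma~\ref{Llipschitzentropy}.
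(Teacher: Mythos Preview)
Your proposal is correct and, as you yourself anticipated, the paper does not prove this statement at all: it is quoted verbatim from \citet{shalev2014understanding} with the preamble ``we will quote the following result,'' and no proof is given. Your sketch (decompose, drop the non-positive empirical term, Hoeffding for the fixed $g^*$, McDiarmid plus symmetrisation for the supremum) is exactly the standard textbook argument that the citation points to, and your closing remarks about why the vector-valued difficulty only enters downstream through $\mathfrak{R}_n(\mathcal{L}\circ\mathcal{G})$ accurately reflect the paper's own discussion surrounding the theorem.
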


Now, the question is how to obtain a meaningful bound on the Rademacher complexity \(\mathfrak{R}_n(\mathcal{L}\circ\mathcal{G})\) as \(n\rightarrow\infty\). When \(\mathcal{G}\) is a class of real-valued functions, the Contraction Lemma \citep[p.331, Lemma 26.9]{shalev2014understanding} tells us that if, for each \(Y_i\in\mathbb{R}\), the map \(y\mapsto\mathcal{L}(Y_i,y)\) is \(c\)-Lipschitz, then \(\mathfrak{R}_n(\mathcal{L}\circ\mathcal{G})\) is bounded by \(c\mathfrak{R}_n(\mathcal{G})\), so it is meaningful to work with \(\mathfrak{R}_n(\mathcal{G})\). However, an analogue of this result when \(\mathcal{G}\) is a class of \(\mathcal{Y}\)-valued functions is shown to be impossible via a counterexample, in \citet[Section 6]{maurer2016vector}. 

As mentioned above, one of the main ways of bounding the Rademacher complexity is to use entropy. As our end goal is to bound the Rademacher complexity of \(\mathcal{L}\circ\mathcal{G}\), there are two ways of going about this task with entropy. For real-valued function classes \(\mathcal{F}\), what is commonly done is to bound the Rademacher complexity of \(\mathcal{L}\circ\mathcal{F}\) with the Rademacher complexity of \(\mathcal{F}\) using contraction, then to bound the Rademacher complexity of \(\mathcal{F}\) by an expression involving the entropy, using chaining. As discussed before, contraction becomes difficult with vector-valued function classes. But we propose a different way that avoids contraction of Rademacher complexities. We can first bound the Rademacher complexity of \(\mathcal{L}\circ\mathcal{G}\) with an expression involving the entropy of \(\mathcal{L}\circ\mathcal{G}\), and use the following contraction result of entropies. 
\begin{lemma}\label{Llipschitzentropy}
	Suppose that for each \(Y\in\mathcal{Y}\), the \(\mathcal{Y}\rightarrow\mathbb{R}\) map \(y\mapsto\mathcal{L}(Y,y)\) is \(c\)-Lipschitz for some constant \(c>0\), i.e. for \(y_1,y_2\in\mathcal{Y}\), \(\lvert\mathcal{L}(Y,y_1)-\mathcal{L}(Y,y_2)\rvert\leq c\lVert y_1-y_2\rVert_\mathcal{Y}\). Then for any \(\delta>0\), we have
	\[H(c\delta,\mathcal{L}\circ\mathcal{G},\lVert\cdot\rVert_{2,P_n})\leq H(\delta,\mathcal{G},\lVert\cdot\rVert_{2,P_n}).\]
\end{lemma}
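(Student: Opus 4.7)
The plan is a direct transfer-of-cover argument. I would first observe that the Lipschitz assumption on $\mathcal{L}$ translates pointwise into a bound on the empirical $L^2$-distance between compositions: for any $g_1, g_2 \in \mathcal{G}$, writing $f_j(x,y) = \mathcal{L}(y, g_j(x))$, I have
\[
\|f_1 - f_2\|_{2,P_n}^2 \;=\; \frac{1}{n}\sum_{i=1}^n \bigl|\mathcal{L}(Y_i, g_1(X_i)) - \mathcal{L}(Y_i, g_2(X_i))\bigr|^2 \;\leq\; \frac{c^2}{n}\sum_{i=1}^n \|g_1(X_i) - g_2(X_i)\|_\mathcal{Y}^2 \;=\; c^2 \|g_1-g_2\|_{2,P_n}^2,
\]
so $\|f_1 - f_2\|_{2,P_n} \leq c\,\|g_1 - g_2\|_{2,P_n}$.

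The next step is to push this contraction through the definition of the covering number. Let $N = N(\delta, \mathcal{G}, \|\cdot\|_{2,P_n})$ and let $g_1,\ldots,g_N \in \mathcal{G}$ be a minimal $\delta$-net. For any $f = \mathcal{L} \circ g \in \mathcal{L} \circ \mathcal{G}$, pick $j$ with $\|g - g_j\|_{2,P_n} \leq \delta$; the inequality above then gives $\|f - \mathcal{L} \circ g_j\|_{2,P_n} \leq c\delta$. Since each $\mathcal{L} \circ g_j$ lies in $\mathcal{L} \circ \mathcal{G}$, the collection $\{\mathcal{L} \circ g_j\}_{j=1}^N$ is a valid $c\delta$-net of $\mathcal{L} \circ \mathcal{G}$ with centres in $\mathcal{L} \circ \mathcal{G}$, which is exactly what the definition of $N(c\delta, \mathcal{L} \circ \mathcal{G}, \|\cdot\|_{2,P_n})$ requires. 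Hence $N(c\delta, \mathcal{L} \circ \mathcal{G}, \|\cdot\|_{2,P_n}) \leq N$, and taking logarithms yields the claimed entropy inequality.

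There is essentially no obstacle here beyond bookkeeping; the only subtle point to confirm is that the definition of covering number in Section \ref{SSmaths} requires the centres to lie inside the set being covered, which is why I take the $g_j$ inside $\mathcal{G}$ (so that $\mathcal{L} \circ g_j \in \mathcal{L} \circ \mathcal{G}$) rather than, say, in some ambient space. The argument is insensitive to the choice of reference measure, so the same proof would give the analogous contraction for $\|\cdot\|_{1,P_n}$, $\|\cdot\|_\infty$, or indeed $\|\cdot\|_{p,Q}$ for any $p \geq 1$ and any measure $Q$ on $\mathcal{X} \times \mathcal{Y}$ whose $\mathcal{X}$-marginal is used to define the norm on $\mathcal{G}$.
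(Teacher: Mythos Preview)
Your proof is correct and is essentially identical to the paper's own argument: both take a minimal $\delta$-cover of $\mathcal{G}$, push it through the composition using the pointwise Lipschitz bound to obtain a $c\delta$-cover of $\mathcal{L}\circ\mathcal{G}$, and take logarithms. Your remark about the centres lying in the set is a nice additional observation, but otherwise the two proofs coincide.
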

\begin{proof}
	To ease the notation, write \(N=N(\delta,\mathcal{G},\lVert\cdot\rVert_{2,P_n})\), and let \(g_1,...,g_N\) be a minimal \(\delta\)-covering of \(\mathcal{G}\). Then for any \(\mathcal{L}\circ g\in\mathcal{L}\circ\mathcal{G}\), there exists some \(g_j\), \(j\in\{1,...,N\}\) with \(\lVert g-g_j\rVert_{2,P_n}=(\frac{1}{n}\sum^n_{i=1}\lVert g(X_i)-g_j(X_i)\rVert^2_\mathcal{Y})^{1/2}\leq\delta\). Then by the Lipschitz condition on \(\mathcal{L}\),
	\begin{alignat*}{2}
		\left\lVert\mathcal{L}\circ g-\mathcal{L}\circ g_j\right\rVert_{2,P_n}&=\left(\frac{1}{n}\sum^n_{i=1}\left\lvert\mathcal{L}(Y_i,g(X_i))-\mathcal{L}(Y_i,g_j(X_i))\right\rvert^2\right)^{\frac{1}{2}}\\
		&\leq\left(\frac{1}{n}\sum^n_{i=1}c^2\left\lVert g(X_i)-g_j(X_i)\right\rVert_\mathcal{Y}^2\right)^{\frac{1}{2}}\\
		&=c\left\lVert g-g_j\right\rVert_{2,P_n}\\
		&\leq c\delta.
	\end{alignat*}
	Hence \(\mathcal{L}\circ g_1,...,\mathcal{L}\circ g_N\) is a \(c\delta\)-covering of \(\mathcal{L}\circ\mathcal{G}\), i.e.
	\[N(c\delta,\mathcal{L}\circ\mathcal{G},\lVert\cdot\rVert_{2,P_n})\leq N(\delta,\mathcal{G},\lVert\cdot\rVert_{2,P_n}).\]
	Now finish the proof by taking logarithms of both sides. 
\end{proof}
So for empirical risk minimisation problems with appropriate loss functions, it does make sense to consider the entropy of vector-valued function classes \(\mathcal{G}\), while it remains as future work to investigate the use of the Rademacher complexity of \(\mathcal{G}\). 
\end{document}